\documentclass{amsart}

\setcounter{secnumdepth}{5}

\setlength{\oddsidemargin}{0cm}
\setlength{\evensidemargin}{0in}
\setlength{\textwidth}{16.0cm}
\setlength{\topmargin}{0.36cm}
\setlength{\textheight}{8.55in}
\usepackage[small, nohug]{diagrams}
\usepackage[mathscr]{eucal}
\usepackage{amsfonts}
\usepackage{amsmath}
\usepackage{amsthm}
\usepackage{amssymb}
\usepackage{latexsym}

\usepackage{graphicx}
\usepackage{color}
\usepackage[noadjust]{cite}
\usepackage{epsfig}

\newfont{\msam}{msam10}
\newtheorem{theorem}{Theorem}[section]
\newtheorem{proposition}[theorem]{Proposition}
\newtheorem{corollary}[theorem]{Corollary}
\newtheorem{lemma}[theorem]{Lemma}
\theoremstyle{definition}
\newtheorem{definition}[theorem]{Definition}
\newtheorem{remark}[theorem]{Remark}
\newtheorem{rmk}[theorem]{Remark}
\newtheorem{example}[theorem]{Example}

\newtheorem{question}[theorem]{Question} 

\let\nc\newcommand

\nc{\la}{\label}

\def\bthm{\begin{theorem}}
\def\ethm{\end{theorem}}
\def\blemma{\begin{lemma}}
\def\elemma{\end{lemma}}
\def\bproof{\begin{proof}}
\def\eproof{\end{proof}}
\def\bprop{\begin{proposition}}
\def\eprop{\end{proposition}}

\def\Z{\mathbb{Z}}

\def\A{\SH_{q,t}}
\def\SH{\mathrm{SH}}
\def\H{\mathrm{H}}
\def\HH{\mathrm{H}}

\def\N{\mathbb{N}}

\def\U{\mathcal{U}}
\def\D{\mathcal{D}}

\def\s{\hat{s}}
\def\sx{\hat{x}}

\def\sy{\hat{y}}
\def\sT{\hat{T}}
\def\e{\boldsymbol{\mathrm{e}}}


\def\g{\mathfrak{g}}
\def\tc{{t_c}}

\def\sl{\mathfrak{sl}}

\def\c{\mathbb{C}}
\def\C{\mathbb{C}}

\nc{\Hom}{{\rm{Hom}}}
\nc{\Ext}{{\rm{Ext}}}
\nc{\htau}{{\bar{ t}}}
\nc{\HOM}{\underline{\rm{Hom}}}
\nc{\EXT}{\underline{\rm{Ext}}}
\nc{\TOR}{\underline{\rm{Tor}}}
\nc{\End}{{\rm{End}}}
\nc{\Map}{{\rm{Map}}}
\nc{\Out}{{\rm{Out}}}
\nc{\GL}{{\rm{GL}}}
\nc{\SL}{{\rm{SL}}}
\nc{\PGL}{{\rm{PGL}}}
\nc{\G}{{\rm{G}}}
\nc{\Rep}{{\rm{Rep}}}
\nc{\ad}{{\rm{ad}}}
\nc{\dlim}{\varinjlim}
\newcommand{\Mod}{{\tt{Mod}}}

\newcommand{\into}{\,\,\hookrightarrow\,\,}

\numberwithin{equation}{section}

%


\newcommand{\inv}[1]{#1^{-1}}

\newcommand{\rr}{{\mathbf{r}}}
\newcommand{\sss}{{\mathbf{s}}}

\newcommand{\aaa}{{\mathbf{a}}}
\newcommand{\tp}{ {\mathrm{top} } }
\newcommand{\alg}{{\mathrm{alg}}}
\newcommand{\gl}{\mathfrak{gl}}

\begin{document}
\title[]{Iterated torus knots and double affine Hecke algebras}
\date{\today}
\author{Peter Samuelson}
\address{Department of Mathematics, University of Edinburgh, Edinburgh, UK}
\email{peter.samuelson@ed.ac.uk}

\begin{abstract}
We give a topological realization of the (spherical) double affine Hecke algebra $\SH_{q,t}$ of type $\sl_2$, and we use this to construct a module over $\SH_{q,t}$ for any knot $K \subset S^3$. As an application, we give a purely topological interpretation of Cherednik's 2-variable polynomials $P_n(r,s; q,t)$ of type $\sl_2$ from \cite{Che13} (where $r,s \in \Z$ are relatively prime). 

We then generalize the construction of these polynomials (for $\sl_2$) from torus knots to all iterated cables of the unknot and prove they specialize to the colored Jones polynomials of the knot. Finally, in the Appendix we compare this construction to a later construction of Cherednik and Danilenko.
\end{abstract}
\maketitle
\setcounter{tocdepth}{1}
\tableofcontents

\section{Introduction}
The (spherical) \emph{double affine Hecke algebra} $\SH_{q,t}(\g)$ is a (noncommutative) algebra associated to a semisimple complex Lie algebra $\g$ and two parameters $q,t \in \C^*$. These algebras were introduced by Cherednik to prove Macdonald's conjectures about Macdonald polynomials \cite{Che95}, and they have since found applications in many areas of mathematics (see, e.g. \cite{Che05} and references therein). In this paper we give two new connections between DAHAs and knots -- the first involves iterated cables of the unknot, and the second involves arbitrary knots.

\subsection{Cables}
In \cite{AS15} (see also \cite{AS12} and \cite{AS11}), Aganagic and Shakirov used refined Chern-Simons theory and generalized Verlinde algebras to construct $q,t$ versions of Reshetikhin-Turaev invariants for torus knots. In \cite{Che13}, Cherednik gave a construction of these polynomials using double affine Hecke algebras. More precisely, he used the representation theory of $\SH_{q,t}(\g)$ to construct polynomials $P_{\alpha,r,s}(q,t) \in \C[q^{\pm 1},t^{\pm 1}] $, where $\alpha \in \mathfrak t^*$ is an integral dominant weight and $r,s \in \Z$ are relatively prime. He gave a number of conjectural properties of these polynomials -- the conjecture which is relevant to us is that they specialize at $t=q$ to the Reshetikhin-Turaev invariants of the $(r,s)$ torus knot.
 
From now on we fix $\g = \sl_2$. In this case, if we identify integral dominant weights with $\N$,  Cherednik proved the equality\footnote{This equality is stated in our normalization conventions - the precise conversion is stated in Remark \ref{rmk_normalization}.}
\begin{equation}\label{eq_specialization_intro}
P_{n,r,s}(q,-q^2) = J_n(K_{r,s};q)
\end{equation}
where the right hand side is the $n^{\textrm{th}}$ colored Jones polynomial of $K_{r,s}$. 

In this paper we extend the construction of the polynomials $P_{n,r,s}(q,t)$ from torus knots to all \emph{iterated cables} of the unknot and show that these new polynomials specialize to the colored Jones polynomials. To do this, we provide a \emph{cabling formula} that expresses the colored Jones polynomials of the cable $K_{r,s}$ of a knot $K\subset S^3$ in terms of those of $K$. Various versions of this formula are well-known and have appeared in the literature, but for completeness we prove skein-theoretic versions which are suited for our purposes in Section \ref{sec_cabling}. We then show that the $t=-q^2$ specialization of the formula defining the polynomial $P_{n}(r,s;q,t)$ is identical to the cabling formula when $K$ is the unknot (in this case the cable $K_{r,s}$ is the $(r,s)$ torus knot.)

We remark that the formula defining the polynomials $P_n(r,s;q,t)$ (and our formula generalizing this to iterated torus knots) uses several structures associated to the DAHA $\SH_{q,t}$, and when this formula is identified with the skein-theoretic cabling formula, each of these structures has a natural topological interpretation. More precisely, Cherednik's construction uses the algebra $\SH_{q,t}$, an $\SL_2(\Z)$ action on $\SH_{q,t}$, an action of $\SH_{q,t}$ on $P = \C[x]$ (called the polynomial representation), and a pairing $P^{op}\otimes_{\SH_{q,t}} P \to \C[q^{\pm 1},t^{\pm 1}]$ (where $P^{op}$ is the twist of $P$ by a certain anti-automorphism). In the $t=-q^2$ specialization the algebra $\SH_{q,-q^2}$ is the Kauffman bracket skein algebra of the torus, and the $\SL_2(\Z)$ action on $\SH_{q,t}$ is induced by the action of the mapping class group of the torus. The polynomial representation corresponds to the skein module of a small neighborhood of a knot, which is a module over the algebra associated to its boundary torus. Finally, Cherednik's pairing corresponds to the decomposition of $S^3$ into the union of two unknotted solid tori, and the element on which he evaluates the pairing corresponds to the $(r,s)$ torus knot, embedded in the common boundary of the two solid tori and colored with a Jones-Wenzl idempotent.

One advantage of using the cabling formula is that it applies to all knots. As mentioned above, this allows us to extend the definition of the polynomial $P_{n,r,s}$ from torus knots to all iterated cables of the unknot. 
More precisely, given a sequence $ \rr_m = (r_1,\ldots,r_m)$ and $\sss_m = (s_1,\ldots,s_m)$ with $r_i,s_i$ relatively prime integers, we define $K(\rr_1,\sss_1)$ to be the $(r_1,s_1)$-torus knot and $K(\rr_m,\sss_m)$ to be the $(r_m,s_m)$ cable of $K(\rr_{m-1},\sss_{m-1})$. We then define a rational function $P_n(\rr_m,\sss_m; q,t) \in \C(q^{\pm 1},t^{\pm 1})$ for each $n \in \N$ (see Definition \ref{def_iteratedcable}) and prove the following (see Theorem \ref{thm_topiteratedcablespecialization}):
\begin{theorem}
If $K(\rr,\sss)$ is a knot which is an iterated cable of the unknot, we have the equality 
\[
 P_n(\rr,\sss;q,t=-q^2) = J_n(K(\rr,\sss); q)
\]
\end{theorem}
If the sequences $\rr_m$ and $\sss_m$ are length 1, then $K(\rr,\sss)$ is a torus knot and this theorem reproduces one of Cherednik's theorems in \cite{Che11}.

\subsection{Arbitrary knots}
It is natural to ask whether some similar construction can be used to produce 2-variable polynomial knot invariants for \emph{all} knots. 
However, it seems likely that algebraic constructions using the polynomial representation can only ``see'' iterated torus knots or links. 

As a first step in this direction, in Section \ref{sec_knotcomplementsmodule}, for \emph{any} knot $K$ we define an $\SH_{q,t}$ bimodule $\bar K_{q,t}$ using a $t$-modification of the Kauffman bracket skein module construction. For the unknot, we show this bimodule is isomorphic to $\SH_{q,t}$ itself. We further construct a canonical quotient $K_{q,t}$ of $\bar K_{q,t}$ which is just a left module (see Definition \ref{def_leftmodquotient}), and we show that for the unknot this left module is the sign representation of $\SH_{q,t}$. We also show the following (see Proposition \ref{prop_teq1surj}):
\begin{theorem}
The vector space $K_{q,t}$ is knot invariant which is a left module over $\SH_{q,t}$. If we specialize $t=1$ then there is a natural surjective $\SH_{q,t=1}$-module map
\begin{equation}\label{eq_introsurj} 
K_{q,t=1}  \twoheadrightarrow K_q(S^3\setminus K).
\end{equation}
\end{theorem}
We remark that there have been a number of recent papers providing (or conjecturing) various connections between double affine Hecke algebras and certain classes of knots (e.g.\ \cite{Che13}, \cite{GORS14}, \cite{EGL13}, \cite{GN15}, \cite{BS14}). However, to the best of our knowledge, the $\SH_{q,t}$-module $K_{q,t}$ is the only proven connection between DAHAs (with arbitrary parameters) and arbitrary knots.

To extract polynomial knot invariants from the classical skein module, one uses the $\C[q^{\pm 1}]$-linear map \[\epsilon: K_q(S^3\setminus K) \to K_q(S^3) = \C[q^{\pm 1}]\] 
induced by the inclusion $S^3\setminus K \to S^3$ - the key fact here is that $K_q(S^3)$ is isomorphic to $\c[q^{\pm 1}]$. 
\begin{question}
 For all $t \in \C^*$, is there a canonical evaluation map $\bar K_{q,t} \to \C[q^{\pm 1},t^{\pm 1}]$?
\end{question}
We give a positive answer to this question for the unknot in Corollary \ref{corollary_pkp}, but for other knots this seems to be a subtle question. In particular, the proof of Corollary \ref{corollary_pkp} uses the PBW property for $\H_{q,t}$, which is a nontrivial fact. It is unclear whether an analogue of this PBW property can be proven for nontrivial knots. However, composing the surjection (\ref{eq_introsurj}) with the evaluation map $\epsilon$ gives a positive answer to this question when $t=1$.
 

One might also ask if there is a similar topological construction of the (spherical) DAHA if $\g$ has rank greater than 1. Two versions of skein relations for $\g = \sl_n$ (and $t=1$) appear in \cite{Sik05} and \cite{CKM14}, and skein relations for $\g$ of rank 2 appear in \cite{Kup96}. However, it is not clear whether the spherical subalgebra $\SH_{q,t}(\g)$ is a quotient of the $\g$-skein module of the punctured torus, and the ``generators-and-relations" approach in this paper will be more difficult for higher rank $\g$.

Some brief historical remarks are in order. After the first version of the present paper appeared, Cherednik and Danilenko (see \cite{CD14}) gave a construction of certain polynomials for general $\g$ that are conjecturally related to iterated torus knots. For $\g = \sl_2$ we compare their construction to ours in an Appendix. After this, Morton and the author posted \cite{MS14} (now \cite{MS17}), which proved that the $\gl_n$ polynomials defined in \cite{CD14} specialized to the $\gl_n$ Reshetikhin-Turaev invariants for iterated torus knots. Without further argument, this theorem doesn't directly imply the statement for $\sl_2$ polynomials proved in this paper because the relationship between the $q,t$ polynomials for $\gl_2$ and $\sl_2$ is not obvious.

A summary of the contents of the paper is as follows. In Section \ref{sec_background} we recall background material about the Kauffman bracket skein module and the double affine Hecke algebra, including two cabling formulas that are essential in later sections. In Section \ref{sec_topologicaldeformations} we construct $t$-deformed versions of the Kauffman bracket skein module of a surface and of knot complements. We then use this to give a topological construction of Cherednik's polynomials in Section \ref{sec_topcherednik}, and we give a topological proof that these polynomials specialize to the colored Jones polynomials of torus knots. We give an algebraic generalization of Cherednik's construction to iterated cables of the unknot in Section \ref{sec_iteratedcables}. Finally, in an Appendix we show that when $\g = \sl_2$, the polynomials defined in \cite{CD14} for iterated torus knots specialize to some constant times the colored Jones polynomial.

\noindent \textbf{Acknowledgements:}
We would like to thank Yuri Berest for extensive explanations and discussions while advising the author's thesis \cite{Sam12} (which contained the results in the first three subsections of Section \ref{sec_topologicaldeformations}), and for providing notes on which Section \ref{sec_dahabackground} was based. We would also like to thank I. Cherednik for helpful comments on an early version of this paper and discussions of \cite{CD14}, and G. Masbaum for several conversations, and in particular for explaining the sign in Corollary  \ref{lemma_topskeincable}. We also thank D. Bar-Natan, E. Gorsky, A. Marshall, G. Muller, A. Oblomkov, M. Pabiniak, S. Shakirov, and D. Thurston for enlightening conversations. The author is grateful to the users of the website MathOverflow who have provided several helpful answers (see, e.g. \cite{35687}). Finally, several computations were done using \texttt{Mathematica}, and in particular using the packages \texttt{KnotTheory} and \texttt{NCAlgebra}.

\section{Background}\label{sec_background}
In this section we recall background material about Kauffman bracket skein modules and double affine Hecke algebras that will be used in the remainder of the paper.

\subsection{Kauffman bracket skein modules}
In this section we define the Kauffman bracket skein module and recall several of its properties. In particular, we describe its relation to the colored Jones polynomials and two resulting cabling formulas.

\subsubsection{Knot complements}\label{subsec_knotgroups}


Recall that two maps $f,g:M \to N$ of manifolds are \emph{ambiently isotopic} if they are in the same orbit of the identity component of the diffeomorphism group of $N$. This is an equivalence relation, and a \emph{knot} in a 3-manifold $M$ is the equivalence class of a smooth embedding $K: S^1 \hookrightarrow M$. 

For an oriented knot $K \subset S^3$ there is a canonical identification $T = S^1 \times S^1 \to \partial(S^3 \setminus K)$. More precisely, let  $N_K \subset S^3$ be a closed tubular neighborhood of $K$, and let $N_c$ be the closure of its complement. Then the following lemma provides a unique (up to isotopy) identification of $N_K \cap N_c$ with $S^1\times S^1$:
\begin{lemma}[{\cite[Ch. 3]{BZ03}}]\label{lemma_meridianlongitude}
 There is a unique (up to isotopy) pair of simple loops  (the meridian $m$ and longitude $l$)  in $T$ subject to the conditions
\begin{enumerate}
\item  $m$ is nullhomotopic in $N_K$,
\item $l$ is nullhomologous in $N_c$,
\item $m,l$ intersect once in $T$,
\item in $S^3$, the linking numbers $(m,K)$ and $(l,K)$ are 1 and 0, respectively.
\end{enumerate}
\end{lemma}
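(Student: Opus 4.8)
The plan is to reduce the statement to the elementary classification of simple closed curves on a torus, together with two standard homology computations. First I would recall the slope dictionary for $T = S^1\times S^1$: every essential simple closed curve represents a primitive class in $H_1(T;\Z)\cong\Z^2$; this class, up to sign, determines the curve up to isotopy; every primitive class is so realized; and two essential curves can be isotoped to meet in exactly $|\langle\gamma_1,\gamma_2\rangle|$ points, where $\langle\,\cdot\,,\cdot\,\rangle$ is the intersection pairing. (This is classical, e.g. from the fact that the mapping class group of $T$ acts on $H_1(T)$ as $\GL_2(\Z)$, transitively on primitive classes.) Under this dictionary, conditions (1)--(3) become linear-algebraic conditions on a pair of primitive classes; the task is to show these have a unique solution and that the solution also satisfies (4).

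For the meridian: $N_K$ is a solid torus, so $H_1(N_K;\Z)\cong\Z$ and the inclusion-induced map $i_*\colon H_1(T;\Z)\to H_1(N_K;\Z)$ has kernel a rank-one direct summand. Define $m$ to be a primitive generator of $\ker i_*$, realized by a simple closed curve. The meridian disk in $N_K\cong S^1\times D^2$ exhibits $m$ as nullhomotopic there (condition (1)), and any simple closed curve on $T$ nullhomotopic in $N_K$ is in particular nullhomologous, hence lies in $\ker i_*$, hence is isotopic to $m$; this gives existence and uniqueness of $m$ simultaneously.

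For the longitude: apply Mayer--Vietoris to $S^3 = N_K\cup_T N_c$. Using $H_2(S^3)=H_1(S^3)=0$ one obtains an isomorphism $H_1(T;\Z)\xrightarrow{\ \sim\ }H_1(N_K;\Z)\oplus H_1(N_c;\Z)$, so $H_1(N_c;\Z)\cong\Z$ (equivalently $H_1(S^3\setminus K)\cong\Z$ by Alexander duality, noting $N_c$ deformation retracts onto $S^3\setminus K$), and a short diagram chase shows $j_*(m)$ generates $H_1(N_c;\Z)$; thus $j_*\colon H_1(T;\Z)\to H_1(N_c;\Z)\cong\Z$ is surjective. Choosing $\mu$ with $\{m,\mu\}$ a basis of $H_1(T;\Z)$ and setting $l := \mu - c\,m$ for the unique $c$ with $j_*(\mu - cm)=0$ yields a primitive generator of $\ker j_*$ with $\langle m,l\rangle = \langle m,\mu\rangle = \pm1$. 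Since a primitive class $a\,m+b\,l$ has $j_*(a m + b l) = \pm a$, which vanishes iff $a=0$, the class $l$ is, up to sign, the unique primitive class nullhomologous in $N_c$. This gives (2) (read as nullhomologous in $N_c$ -- the longitude is genuinely not nullhomotopic in $N_c$ unless $K$ is trivial), gives (3) via $\langle m,l\rangle=\pm1$, and gives uniqueness of $l$. Finally, linking number with $K$ factors through $H_1(T;\Z)\to H_1(S^3\setminus K)\cong H_1(N_c;\Z)\cong\Z$, so $\mathrm{lk}(l,K)=0$ because $l\in\ker j_*$, while $\mathrm{lk}(m,K)=\pm1$ because $m$ generates; fixing the orientation of $m$ so $\mathrm{lk}(m,K)=+1$ produces condition (4) (the orientation of $l$ being irrelevant for unoriented loops).

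I expect the main obstacle to be purely expository rather than mathematical: stating the slope dictionary precisely enough that "unique up to isotopy" in the conclusion really does follow from uniqueness of the underlying primitive class, and checking that the four listed conditions are exactly what the construction above produces (in particular reconciling the homotopy/homology wording of (2) with the homological statement actually used). The two homology computations are routine, and the whole argument is essentially that of \cite{BZ03}.
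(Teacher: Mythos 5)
Your argument is correct and is essentially the standard proof from Burde--Zieschang that the paper simply cites (the paper offers no proof of its own beyond the reference): classify essential simple closed curves on $T$ by primitive classes in $H_1(T;\Z)$, identify $m$ and $l$ as generators of the kernels of $H_1(T)\to H_1(N_K)$ and $H_1(T)\to H_1(N_c)$, and use Mayer--Vietoris for $S^3=N_K\cup_T N_c$ to see these kernels are complementary rank-one summands, which also yields conditions (3) and (4). Your remark that condition (2) must be read as \emph{nullhomologous} rather than nullhomotopic (the boundary torus of a nontrivial knot is incompressible from the complement side) is exactly right and matches the formulation in \cite{BZ03}.
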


\subsubsection{Kauffman bracket skein modules}\label{kbsmsection}
A \emph{framed link} is an 
embedding of a disjoint union of annuli $S^1 \times [0,1]$ into an oriented 3-manifold $M$. (The framing refers to the $[0,1]$ factor and is a technical detail that will be suppressed when possible.) We will consider framed links to be equivalent if they are ambiently isotopic.

Let  $\mathscr L(M)$ be the vector space 
spanned by the set of ambient isotopy classes of framed unoriented links in $M$ (including the empty link). Let $\mathscr L'(M)$ be the smallest subspace of $\mathscr L(M)$ containing the skein expressions 
$L_+ - qL_0 - q^{-1}L_\infty$ and $L \sqcup \bigcirc + (q^2+q^{-2})L$. The links $L_+$, $L_0$, and $L_\infty$ are identical outside of a small 3-ball (embedded as an oriented sub-manifold of $M$), and inside the 3-ball 
they appear as in Figure \ref{kbsm}. (All pictures drawn in this paper will have blackboard framing. In other words, a line on the page represents a strip $[0,1]\times [0,1]$ in a tubular neighborhood of the page, 
and the strip is always perpendicular to the paper.)

\begin{remark}
Our constant $q$ is the same as the constant $A$ that is more commonly used in skein theory (e.g. in \cite{BHMV95}). (We made this notational choice since we reserve the notation $A$ for an algebra.)
\end{remark}

\begin{figure}
\begin{center}
\input{kbsmrelation.pstex_t}
\caption{Kauffman bracket skein relations}\label{kbsm}
\end{center}
\end{figure}

\begin{definition}[\cite{Prz91}]
The \textbf{Kauffman bracket skein module} is the vector space $K_q(M) := \mathscr L / \mathscr L'$. It contains a canonical element $\varnothing \in K_q(M)$ corresponding to the empty link.
\end{definition}

\begin{remark}
 To shorten the notation, if $M = F \times [0,1]$ for a surface $F$, we will often write $K_q(F)$ for the skein module $K_q(F\times [0,1])$.
\end{remark}

\begin{example}\label{skeins3}
One original motivation for defining $K_q(M)$ is the isomorphism of vector spaces
\[\C \stackrel \sim \to K_q(S^3), \quad 1 \mapsto \varnothing \]
Kauffman proved that this map is an isomorphism and that the inverse image of a link is the Jones polynomial\footnote{More precisely, the image of the link is a number in $\C$ that depends polynomially on $q \in \C^*$, and this (Laurent) polynomial is the Jones polynomial of the link, up to a normalization.} of the link. The skein relations in Figure \ref{kbsm} can be used to remove crossings and trivial loops of a diagram of a link until the diagram is a multiple of the empty link, which shows that the vector space map $\C \to K_q(S^3)$ sending $\alpha \mapsto \alpha\cdot \varnothing$ is surjective. Showing it is injective is equivalent to showing the Jones polynomial of a link is well-defined.
\end{example}

In general $K_q(M)$ is just a vector space - however, if $M$ has extra structure, then $K_q(M)$ also has extra structure. In particular,
\begin{enumerate}
\item If $M = F \times [0,1]$ for some surface $F$, then $K_q(M)$ is an algebra (which is typically noncommutative). The multiplication is given by ``stacking links." 
\item If $M$ is a manifold with boundary, then $K_q(M)$ is a module over $K_q(\partial M)$. The action is given by ``pushing links from the boundary into the manifold.'' 
\item\label{embedding} An oriented embedding $M \hookrightarrow N$ of 3-manifolds induces a linear map $K_q(M) \to K_q(N)$. 
\item If $q=\pm 1$, then $K_q(M)$ is a commutative algebra (for any oriented 3-manifold $M$). The multiplication is given by ``disjoint union of links,'' which is well-defined because when $q=\pm 1$, the skein relations allow strands to `pass through' each other.
\end{enumerate}

\begin{remark}
The third property may be interpreted as follows: let $C$ be the category whose objects are oriented 3-dimensional manifolds and whose morphisms are oriented embeddings. Then $K_q(-)$ is a functor\footnote{To be pedantic, $K_q(-)$ is functorial with respect to maps $M \to N$ that are oriented embeddings when restricted to the interior of $M$. In particular, if we identify a surface $F$ with a boundary component of $M$ and $N$, then the gluing map $M \sqcup N \to M \sqcup_F N$ induces a linear map $K_q(M) \otimes_\C K_q(N) \to K_q(M\sqcup_F N)$.} from $C$ to the category of vector spaces. We also remark that the first two properties are a special case of the third. For example, there is an obvious map $F\times [0,1] \sqcup F\times [0,1] \to F\times [0,1]$, and the product structure of $K_q(F \times [0,1])$ comes from the application of the functor $K_q(-)$ to this map.
\end{remark}

\begin{example}
 Let $M = (S^1 \times [0,1]) \times [0,1]$ be the solid torus. The skein relations can be applied to remove crossings and trivial loops in a diagram of any link, and the result is a sum of unions of parallel copies of the loop $u$ generating $\pi_1(M)$. This shows that algebra map $\C[u] \to K_q(S^1\times[0,1])$ sending $u^n$ to $n$ parallel copies of $u$ is surjective, and it follows from \cite{SW07} that this map is injective.
\end{example}

%
%
%
%

\subsubsection{The Kauffman bracket skein module of the torus}
We recall that the \emph{quantum torus} is the algebra
\[
A_q := \frac{\C\langle X^{\pm 1},Y^{\pm 1}\rangle}{XY-q^2YX}
\]
where $q \in \C^*$ is a parameter. There is a $\Z_2$ action by algebra automorphisms on $A_q$ where the generator simultaneously inverts $X$ and $Y$. We define $e_{r,s} = q^{-rs}X^{r}Y^s \in A_q$, which form a linear basis for the quantum torus $A_q$ and satisfy the relations
\[e_{r,s}e_{u,v} = q^{rv-us}e_{r+u,s+v}\]

In this section we recall a beautiful theorem of Frohman and Gelca in \cite{FG00} that gives a connection between skein modules and the invariant subalgebra $A_q^{\Z_2}$. First we establish some notation. Let $T_n \in \c[x]$ be the Chebyshev polynomials defined 
by $T_0 = 2$, $T_1 = x$, and the relation $T_{n+1} = xT_n-T_{n-1}$. If $m,l$ are relatively prime, write $(m,l)$ for the $m,l$ curve on the torus (which is the simple curve wrapping around 
the torus $l$ times in the longitudinal direction and $m$ times in the meridian's direction). It is clear that the links $(m,l)^n$ span $K_q(T^2)$, and it follows from \cite{SW07} that this set is a basis.  However, a more convenient basis is given by the elements $(m,l)_T = T_d((\frac m {d}, \frac l {d}))$ (where $d = \mathrm{gcd}(m,l)$). 
 (We point out that since we are considering unoriented curves, $(m,l) = (-m,-l)$.)

\begin{theorem}[\cite{FG00}]\label{fg00}
The map $f:K_q(T^2) \to A_q^{\Z_2}$ given by $f((m,l)_T) = e_{m,l}+e_{-m,-l}$ is an isomorphism of algebras.
\end{theorem}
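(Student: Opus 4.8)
The plan is to check separately that $f$ is a bijection of vector spaces and that it is multiplicative; only the second requires real work. For the first, both sides carry an evident basis indexed by $\Z^2/\{\pm1\}$. On the target, $\{e_{m,l}\}_{(m,l)\in\Z^2}$ is a basis of $A_q$ on which the $\Z_2$-generator acts by $e_{m,l}\mapsto e_{-m,-l}$, so $A_q^{\Z_2}$ has basis $\{1\}\cup\{e_{m,l}+e_{-m,-l}:(m,l)\neq(0,0)\}$. On the source, the links $(m,l)^n$ span $K_q(T^2)$ and, by \cite{SW07}, form a basis; since each $T_d$ is monic of degree $d$, the passage to the elements $(m,l)_T=T_d((m/d,l/d))$, $d=\gcd(m,l)$, is a unitriangular change of coordinates within each slope, so $\{\varnothing\}\cup\{(m,l)_T:(m,l)\neq(0,0)\}$ is again a basis. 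As $f$ carries the first basis termwise to the second (with $f(\varnothing)=1$), it is a linear isomorphism; it remains to show it respects products.

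Second, I would reduce multiplicativity to one skein identity. Expanding with $e_{a,b}e_{c,d}=q^{ad-bc}e_{a+c,b+d}$ gives
\[
(e_{p,q}+e_{-p,-q})(e_{r,s}+e_{-r,-s})=q^{\,ps-qr}\bigl(e_{p+r,q+s}+e_{-(p+r),-(q+s)}\bigr)+q^{-(ps-qr)}\bigl(e_{p-r,q-s}+e_{-(p-r),-(q-s)}\bigr).
\]
Since $f$ is already a linear bijection, it is an algebra map if and only if the \emph{product-to-sum formula}
\[
(p,q)_T\cdot(r,s)_T=q^{\,ps-qr}\,(p+r,q+s)_T+q^{-(ps-qr)}\,(p-r,q-s)_T
\]
holds in $K_q(T^2)$ for all $(p,q),(r,s)\in\Z^2$, with the convention $(0,0)_T=2\,\varnothing$ (consistent with $\tfrac12(e_{0,0}+e_{-0,-0})=1$). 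Everything now reduces to this identity.

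To prove the product-to-sum formula I would use the mapping class group. The mapping class group of $T^2$ is $\SL_2(\Z)$; it acts on $K_q(T^2)$ by algebra automorphisms, permuting the basis $\{(m,l)_T\}$ via the standard action on $H_1(T^2)=\Z^2$, and under this action both sides of the formula transform the same way since the exponent $ps-qr$, being a $2\times2$ determinant, is $\SL_2(\Z)$-invariant. As $\SL_2(\Z)$ is transitive on primitive vectors, for $(p,q)$ and $(r,s)$ both primitive we may assume $(p,q)=(1,0)$; the simple curves $(1,0)$ and $(r,s)$ then meet in $|s|$ points on $T^2$, and resolving all $|s|$ crossings of the stacked diagram with $L_+=qL_0+q^{-1}L_\infty$ (deleting the resulting contractible loops via $L\sqcup\bigcirc=-(q^2+q^{-2})L$) collapses to $q^{s}(1+r,s)_T+q^{-s}(r-1,s)_T$, which is the claim in this case. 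The remaining cases, where one or both of $(p,q),(r,s)$ is non-primitive, follow formally from this one using $(p,q)_T=T_d((p/d,q/d)_T)$ and the Chebyshev recursion $T_{d+1}=xT_d-T_{d-1}$, which is exactly the recursion relating the symmetrized monomials $e_{d,0}+e_{-d,0}$ on the quantum-torus side, so the two inductions match term by term. (Alternatively one could invoke a generators-and-relations presentation of $K_q(T^2)$ and check $f$ only on $(1,0)_T,(0,1)_T,(1,1)_T$, but extracting such a presentation is no easier.)

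The main obstacle is the primitive base case of the last step: the $\SL_2(\Z)$-equivariance and the Chebyshev induction are essentially formal once it is in hand, so the work is the diagrammatics under the blackboard-framing convention — isotoping the two stacked curves to a minimal-crossing diagram, correctly identifying the $L_0$ and $L_\infty$ smoothings at each of the $|s|$ crossings, and tracking all powers of $q$ together with the $-(q^2+q^{-2})$ factors from the removed trivial loops, so that the final exponent is $+(ps-qr)$ rather than its negative. This is the same flavour of sign/framing subtlety noted elsewhere in the paper, and it must be pinned down exactly once.
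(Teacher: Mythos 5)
Your reduction of the theorem to the product-to-sum formula
\[
(p,q)_T\cdot(r,s)_T=q^{\,ps-qr}\,(p+r,q+s)_T+q^{-(ps-qr)}\,(p-r,q-s)_T
\]
is the right one, and the linear-isomorphism half (via \cite{SW07} and the unitriangularity of the Chebyshev change of basis) is fine. But the proposal has a genuine gap exactly where you locate "the main obstacle": the primitive base case is asserted, not proved, and it is not a sign/framing bookkeeping exercise --- it is essentially the whole theorem. Resolving the $|s|$ crossings of $(1,0)$ stacked on $(r,s)$ produces $2^{|s|}$ states, most of which contain trivial circles and non-taut or non-primitive multicurve components; one must show that this entire state sum, after removing trivial loops with weight $-(q^2+q^{-2})$ and re-identifying the surviving multicurves, assembles into precisely two terms whose Chebyshev decorations $T_d$ come out correctly (note that $(1+r,s)$ and $(r-1,s)$ are generally non-primitive, so the right-hand side is itself a nontrivial linear combination of multicurves). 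No mechanism for this collapse is given. The original proof in \cite{FG00} avoids the state sum entirely by working in the noncommutative torus, and the known skein-theoretic proofs instead establish only the one-crossing case $(1,0)_T\cdot(r,\pm 1)_T$ directly and then derive the general formula by an algebraic double induction using associativity; your "remaining cases follow formally from the Chebyshev recursion" step is in the right spirit but, as written, the induction also needs the case of a primitive first factor against an \emph{arbitrary} second factor before both factors can be taken non-primitive, and the order in which these cases feed each other has to be set up carefully.

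For comparison, the paper does not reprove this result: it cites \cite{FG00} and records that the statement also follows by combining the presentation of the punctured-torus skein algebra from \cite{BP00} (Theorem \ref{thm_bp}) with Terwilliger's presentation of $\SH_{q,t}$ (Theorem \ref{sphericalrelations}), specialized to $t=1$. That is exactly the generators-and-relations route you set aside as "no easier"; in fact the required presentation is already available in the literature, so checking $f$ on the three generators $x',y',z'$ and on the single central relation is the shorter path, and it is the one this paper's later arguments actually rely on.
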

(This theorem also follows from combining the results of \cite{BP00} with Theorem \ref{sphericalrelations}.)

\begin{remark}\label{remark_canonicalmodulestructure}
 From the discussion in Section \ref{subsec_knotgroups}, if $K$ is an oriented knot, then there is a canonical identification of $S^1\times S^1$ with the boundary of $S^3\setminus K$, so $K_q(S^3\setminus K)$ has a canonical $A_q^{\Z_2}$-module structure. (In fact, this module structure does not depend on the orientation of $K$, but we do not need this fact.)
\end{remark}

\subsubsection{A topological pairing and the colored Jones polynomials} \label{subsec_coloredJpolys}
Let $K \subset S^3$ be a knot. If we identify the solid torus $D^2\times S^1$ with a neighborhood of $K$, then the embeddings $D^2\times S^1 \hookrightarrow  S^3$ and $S^3\setminus K\hookrightarrow S^3$ induce a $\C$-linear map
\begin{equation}\label{eq_prepairing} 
K_q(D^2\times S^1)\otimes_\C K_q(S^3\setminus K) \to \C 
\end{equation}
If $\alpha$ is a link which is parallel to the boundary of $D^2\times S^1$, it can be isotoped to a link inside $D^2\times S^1$ or a link inside $S^3\setminus K$, and inside $S^3$ both these links are isotopic. Therefore, the map (\ref{eq_prepairing}) descends to a pairing
\begin{equation}\label{knotpairing}
\langle -,-\rangle:K_q(D^2\times S^1)\otimes_{K_q(T^2)} K_q(S^3\setminus K) \to K_q(S^3) = \C
\end{equation}

The colored Jones polynomials $J_{n}(K;q) \in \C[q^{\pm 1}]$ of a knot $K \subset S^3$ were originally defined by Reshetikhin and Turaev in \cite{RT90} using the representation theory of $\U_q(\sl_2)$. (In fact, their definition works for any semisimple Lie algebra $\mathfrak g$, but we only deal with $\mathfrak g = \sl_2$.) Here we recall a theorem of Kirby and Melvin that shows that $J_{n}(K;q)$ can be computed in terms of the pairing (\ref{knotpairing}).

Let $S_n \in \C[u]$ be the Chebyshev polynomials of the second kind, which satisfy the initial conditions $S_0 = 1$ and $S_1 = u$, and the recursion relation $S_{n+1} = uS_n - S_{n-1}$. 

\begin{theorem}[\cite{KM91}]\label{thm_coloredjonespolys}
 If $\varnothing \in K_q(S^3\setminus K)$ is the empty link, and $y \in K_q(T^2)$ is the longitude, then
 \[
  J_{n}(K;q) = \langle \varnothing \cdot S_{n-1}(y), \varnothing \rangle
 \]
\end{theorem}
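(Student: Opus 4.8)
The plan is to reduce Theorem~\ref{thm_coloredjonespolys} to the standard skein-theoretic (recoupling) description of the colored Jones polynomial and then to unwind the definition of the pairing (\ref{knotpairing}). First I would identify the element $\varnothing\cdot S_{n-1}(y)$ inside $K_q(D^2\times S^1)$. Recall that $K_q(D^2\times S^1)\cong\C[z]$ as an algebra, where $z$ is the core curve; under the boundary identification $T^2=\partial(D^2\times S^1)$ the $K_q(T^2)$-action pushes curves on $T^2$ into the solid torus, and the longitude $y$, once pushed in, is isotopic to the core, while $y^k$ becomes $k$ disjoint parallel copies of the core, i.e.\ $z^k$. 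Hence $y$ acts by multiplication by $z$, so $\varnothing\cdot S_{n-1}(y)=S_{n-1}(z)\in\C[z]=K_q(D^2\times S^1)$.

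Next I would unwind the pairing. By the construction of (\ref{knotpairing}), the scalar $\langle S_{n-1}(z),\varnothing\rangle$ is the image in $K_q(S^3)=\C$ of the framed link obtained by inserting $S_{n-1}(z)$ into a closed tubular neighborhood $N_K$ of $K$, where $N_K$ is identified with $D^2\times S^1$ via the meridian--longitude framing of Lemma~\ref{lemma_meridianlongitude}; in particular this uses the $0$-framing of $K$ (condition~(4) of that lemma). Thus $\langle\varnothing\cdot S_{n-1}(y),\varnothing\rangle$ is exactly the Kauffman bracket in $S^3$ of $K$ cabled by the skein element $S_{n-1}(z)$ with the $0$-framing, equivalently of the $0$-framed satellite of $K$ with companion $S_{n-1}$ in the solid torus.

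It then remains to recognize this Kauffman bracket as $J_n(K;q)$. Under the Temperley--Lieb/recoupling dictionary the $n$-dimensional irreducible of $\U_q(\sl_2)$ corresponds to the $(n-1)$-st Jones--Wenzl idempotent, whose closure in $K_q(D^2\times S^1)=\C[z]$ is precisely $S_{n-1}(z)$: the Clebsch--Gordan rule $V_2\otimes V_m\cong V_{m+1}\oplus V_{m-1}$ translates into the Chebyshev recursion $S_m=zS_{m-1}-S_{m-2}$, with base cases $S_0=1$ and $S_1=z$. Hence the Kauffman bracket of the $0$-framed $S_{n-1}$-cable of $K$ equals $J_n(K;q)$ in the normalization with $J_1\equiv 1$; this is the theorem of Kirby and Melvin \cite{KM91}, and it is consistent with the case $n=1$, where $S_0=1$ and the left-hand side is $\langle\varnothing,\varnothing\rangle$, the image of the empty link in $K_q(S^3)=\C$, namely $1$. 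Combining the three steps proves the statement.

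The substantive point is the last step: reconciling the Reshetikhin--Turaev definition of $J_n(K;q)$ via $R$-matrices and the ribbon structure on $\Rep(\U_q(\sl_2))$ with the Kauffman bracket definition via Jones--Wenzl idempotents. This requires care with the change of variables relating the quantum parameter to the Kauffman bracket variable $q$, with the framing anomaly (which is why the $0$-framing from Lemma~\ref{lemma_meridianlongitude} is the right choice), and with the overall scalar normalization. Once that dictionary is in place, the first two steps are routine manipulations of the module structure on $K_q(D^2\times S^1)$ and of the definition of the pairing.
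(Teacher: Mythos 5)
The paper offers no proof of this statement: it is quoted directly from Kirby--Melvin \cite{KM91}, so there is nothing internal to compare against. Your argument is the standard proof of that cited result and is correct as outlined: identifying $\varnothing\cdot S_{n-1}(y)$ with $S_{n-1}$ of the $0$-framed core of $N_K$, unwinding the pairing (\ref{knotpairing}) as evaluation of the $S_{n-1}$-satellite in $K_q(S^3)$, and invoking the Jones--Wenzl/Chebyshev dictionary; your normalization check against $J_n(\mathrm{unknot};q)=(-1)^{n-1}(q^{2n}-q^{-2n})/(q^2-q^{-2})$ is exactly the bookkeeping needed to match Remark \ref{remark_signconvention}.
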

\begin{remark}\label{remark_signconvention}
We remark that we avoid a common sign correction - in particular, for us, $J_{n}(\mathrm{unknot};q) = (-1)^{n-1}(q^{2n}-q^{-2n})/(q^2-q^{-2})$. Also, with this normalization, $J_{0}(K;q) = 0$ and $J_{1}(K;q) = 1$ for every knot $K$. Up to a sign, these conventions agree with the convention of labelling irreducible representations of $\U_q(\sl_2)$ by their dimension. (The $S_{n-1}$ are the characters of these irreducible representations.)
\end{remark}


\subsubsection{Two cabling formulas for colored Jones polynomials}\label{sec_cabling}
In this section we describe how the colored Jones polynomials of a cable of a knot $K$ can be computed from the skein module $K_q(S^3\setminus K)$. There are two standard ways of defining the $(r,s)$ cable of a knot, which we refer to as the \emph{topological} and \emph{algebraic} cablings, and each has its own cabling formula.

\begin{remark}
We make no claim to originality for the cabling formulas we give here - they are well known and have appeared in numerous places, including \cite[Theorem 7.1]{FG00}, \cite{vdV08}, \cite{Mor95}, and \cite{Tra14}. For the sake of completeness and self-containment we will provide precise statements.
\end{remark}

\paragraph{Topological cabling}
\begin{definition}\label{def_topcable}
Let $K \subset S^3$ be a framed knot with 0 framing and let $r,s \in \Z$ be relatively prime. Identify $S^1 \times S^1$ with the boundary of a neighborhood of $K$ such that the first copy of $S^1$ is a meridian of $K$ and the second is the longitude determined by the framing. (We note that since $K$ has 0 framing, its longitude is the same as the (unique) longitude given by Lemma \ref{lemma_meridianlongitude}). Then the $(r,s)$ \emph{topological cable} $K^\tp_{r,s}$ of $K$ is the knot which is the image of the $(r,s)$-curve on $T$ and which is given the 0 framing.   
\end{definition}

Let $\gamma_{r,s} \in \SL_2(\Z)$ satisfy $\gamma(0,1) = (r,s)$. The mapping class group of the torus $T$ is $\SL_2(\Z)$, and this induces an action of $\SL_2(\Z)$ on $K_q(T)$. By construction, we have $\gamma_{r,s}(y) = (r,s)_T$, where $(r,s)_T$ is the $(r,s)$ curve on the torus $T$ and $y$ is the longitude of $K$. We remark that the framing of $(r,s)_T$ is parallel to the torus $T$, and in particular the knot $(r,s)_T$ is not 0-framed in $S^3$.

\begin{definition}\label{def_topneighborhoods}
To give a precise statement, we give names to neighborhoods of $K$, its boundary torus, and its cable.
\begin{enumerate}
\item Let $N_{r,s}$ be a neighborhood of $K^\tp_{r,s}$, and identify $K_q(N_{r,s}) \cong \C[u]$ (as algebras) by setting the generator $u$ to be equal to the 0-framed knot $K^\tp_{r,s}$. 
\item Let $N_T$ be a neighborhood of $T$. Identify $A_q^{\Z_2}$ with $K_q(N_T)$ by identifying $x = X+X^{-1}$ with the meridian and $y = Y+Y^{-1}$ with the topological longitude of Lemma \ref{lemma_meridianlongitude}. Since $K$ is 0-framed, the longitude $y$ is the same as the longitude determined by the framing of $K$.
\item Let $N_K$ be a neighborhood of $K$, and identify $K_q(N_K) \cong \C[y]$ (as algebras) by equating $y$ with the 0-framed knot $K$. This is notationally consistent because the knot $K$ is framed parallel to $T$, so it is the element $Y+Y^{-1} \in A_q^{\Z_2}$ that we always call $y$.
\end{enumerate}
\end{definition}
The following tautological inclusions are illustrated in Figure \ref{fig_cablepic}: 
\begin{equation}\label{eq_iotatop}
N_{r,s} \stackrel{\iota}{\hookrightarrow} N_T \stackrel \mu \hookrightarrow N_K
\end{equation}
We will write 
\begin{equation}\label{def_gammatop}
\Gamma^\tp_{r,s} := \mu \circ \iota
\end{equation}
for the composition of these inclusions. By functoriality of skein modules and the identifications above, the map $\Gamma^\tp_{r,s}$ induces a $\c$-linear map $\Gamma^\tp_{r,s}:\C[u] \to \C[y]$.

\begin{figure}
\begin{center}
\input{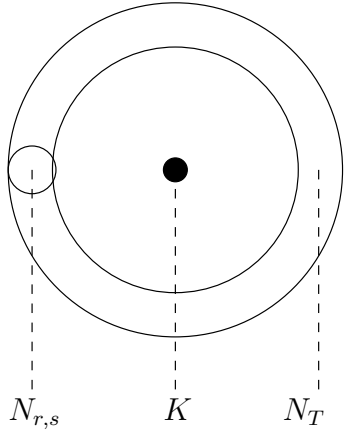}
\caption{A cross-section of the neighborhoods $N_{r,s}$ and $N_T$ for $(r,s) = (0,1)$. The outer circle bounds $N_K$. }\label{fig_cablepic}
\end{center}
\end{figure}

\begin{lemma}\label{lemma_topskeincable}
The induced $\C$-linear map $\Gamma^\tp_{r,s}: \C[u] \to \C[y]$ is given by 
\[
\Gamma^\tp_{r,s}(S_{n-1}(u)) = (-q)^{rs(n^2-1)}1_y\cdot \gamma_{r,s}(S_{n-1}(y))
\]
where $1_y \in \C[y]$  is the empty link in the skein module of a neighborhood of $K$.
\end{lemma}
\begin{proof}
We first compute the image $\iota(u)$. By definition, $\iota(u)$ is the $r,s$ curve on $T$, which is given the $0$-framing in $S^3$. The element $\gamma_{r,s}(y) \in K_q(T)$ is, by definition, the same curve, but its framing is parallel to $T$. Now, the first sentence of the second paragraph of \cite[pg. 323]{MM08} says that since $K$ has writhe 0 (i.e. framing 0), that the framing of $\gamma_{r,s}(y)$ in $S^3$ is $rs$. Therefore, $\iota(u)$ should be twisted by $rs$ units of framing to be isotopic to $\gamma_{r,s}(y)$. Now $S_{n-1}(u)$ is equal to the insertion of the $n^\textrm{th}$ Jones-Wenzl idempotent on $u$, which means that $\iota(S_{n-1}(u))$ is equal to the $n^{\textrm{th}}$ Jones-Wenzl idempotent inserted on the framed curve $\gamma_{r,s}(y)$, which is then twisted by $rs$ full twists. 

Since the Jones-Wenzl idempotent annihilates cups and caps, the $rs$ full twists simplify (under the skein relations) to the displayed power of $-q$. (For a reference for this final statement, see the first equation of the first diagram in the proof of \cite[Thm. 3]{MV94}.) This shows $\iota(S_{n-1}(u)) = (-q)^{rs(n^2-1)}\gamma_{r,s}(S_{n-1}(y))$.

Next, $K_q(N_K) = \c[y]$ is a right module over $K_q(N_T)$. By the definition of this module structure, the linear map $\mu: K_q(N_T) \to K_q(N_K)$ induced from $\mu: N_T \to N_K$ is given by $\mu(a) = \varnothing \cdot a$, where $\varnothing$ is the empty link. Under the identification $\C[y] = K_q(N_k)$ the empty link corresponds to $1$, which shows that $\mu(a) = 1\cdot a$. This completes the proof.
\end{proof}


\begin{corollary}\label{corollary_topcablingformula}
 The colored Jones polynomials of the cable $K_{r,s}^\tp$ are computed by the formula
 \[
  J_{n}( K_{r,s}^\tp;q) = (-q)^{rs(n^2-1)}\langle \varnothing \cdot \gamma_{r,s}(S_{n-1}(y)), \varnothing \rangle_K
 \]
where the pairing $\langle -,-\rangle_K$ is the pairing associated to the knot $K$ described in (\ref{knotpairing}).
\end{corollary}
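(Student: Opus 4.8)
\emph{Proof sketch.} The structure of the argument is short: the formula will follow by combining the Kirby--Melvin description of the colored Jones polynomial (Theorem~\ref{thm_coloredjonespolys}), applied this time to the cable $K^\tp_{r,s}$ itself, with the computation of the map $\Gamma^\tp_{r,s}$ in Lemma~\ref{lemma_topskeincable}. First I would apply Theorem~\ref{thm_coloredjonespolys} to $K^\tp_{r,s}$ with its $0$-framing. Since the generator $u \in K_q(N_{r,s})$ was defined to be the $0$-framed knot $K^\tp_{r,s}$, it coincides with the longitude of $K^\tp_{r,s}$ of Lemma~\ref{lemma_meridianlongitude} pushed off the boundary torus of $N_{r,s}$ with its torus framing; hence Theorem~\ref{thm_coloredjonespolys} reads
\[
J_n(K^\tp_{r,s};q) = \langle \varnothing \cdot S_{n-1}(u),\ \varnothing \rangle_{K^\tp_{r,s}},
\]
where $\langle -,-\rangle_{K^\tp_{r,s}}$ is the pairing (\ref{knotpairing}) attached to $K^\tp_{r,s}$, the left-hand $\varnothing$ lying in $K_q(N_{r,s})$ and the right-hand one in $K_q(S^3 \setminus K^\tp_{r,s})$.

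Next I would unwind this pairing. Because the right-hand entry is the empty link, $\langle \varnothing \cdot S_{n-1}(u),\varnothing\rangle_{K^\tp_{r,s}}$ is just the image of $S_{n-1}(u) \in K_q(N_{r,s})$ under the linear map $K_q(N_{r,s}) \to K_q(S^3) = \C$ induced by the inclusion $N_{r,s} \hookrightarrow S^3$. The key observation is that this inclusion factors through $N_K$, namely as $N_{r,s} \stackrel{\iota}{\hookrightarrow} N_T \stackrel{\mu}{\hookrightarrow} N_K \hookrightarrow S^3$ via the tautological inclusions (\ref{eq_iotatop}); so by functoriality of $K_q(-)$ (property (\ref{embedding})) the image of $S_{n-1}(u)$ in $K_q(S^3)$ equals the image of $\Gamma^\tp_{r,s}(S_{n-1}(u)) \in K_q(N_K)$ under $K_q(N_K) \to K_q(S^3)$. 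Lemma~\ref{lemma_topskeincable} then identifies $\Gamma^\tp_{r,s}(S_{n-1}(u)) = (-q)^{rs(n^2-1)}\, 1 \cdot \gamma_{r,s}(S_{n-1}(y))$ in $K_q(N_K)$, where $\gamma_{r,s}(S_{n-1}(y)) \in K_q(N_T) = A_q^{\Z_2}$ acts on $1 = \varnothing$ through the $K_q(N_T)$-module structure of $K_q(N_K)$.

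Finally I would re-read the resulting quantity as the pairing attached to $K$: by the definition of (\ref{knotpairing}) for $K$, with $N_K \cong D^2 \times S^1$ a neighborhood of $K$ and $N_T$ a collar of its boundary torus, the image in $K_q(S^3)$ of $\varnothing \cdot \gamma_{r,s}(S_{n-1}(y)) \in K_q(N_K)$ paired against $\varnothing \in K_q(S^3 \setminus K)$ is exactly $\langle \varnothing \cdot \gamma_{r,s}(S_{n-1}(y)),\varnothing\rangle_K$. Chaining the three displays gives the corollary. I expect the only genuine difficulty to be bookkeeping: one must check that the two boundary tori and the nested neighborhoods appearing in the pairings $\langle-,-\rangle_{K^\tp_{r,s}}$ and $\langle-,-\rangle_K$ match up under the identifications of Definition~\ref{def_topneighborhoods} and Figure~\ref{fig_cablepic}, and that the $0$-framing used to invoke Theorem~\ref{thm_coloredjonespolys} for $K^\tp_{r,s}$ is the one recorded by the generator $u$. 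Once these compatibilities are pinned down, Steps 1--3 require no new topological input beyond Theorem~\ref{thm_coloredjonespolys} and Lemma~\ref{lemma_topskeincable}.
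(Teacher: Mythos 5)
Your proposal is correct and follows essentially the same route as the paper's proof: apply Theorem~\ref{thm_coloredjonespolys} to the $0$-framed cable, factor the inclusion $N_{r,s}\hookrightarrow S^3$ through $N_T$ and $N_K$ so that the pairing for $K^\tp_{r,s}$ becomes $\iota_K\circ\Gamma^\tp_{r,s}$ applied to $S_{n-1}(u)$, and then invoke Lemma~\ref{lemma_topskeincable} to rewrite this as the stated pairing for $K$. The bookkeeping compatibilities you flag are exactly the identifications of Definition~\ref{def_topneighborhoods} that the paper also relies on, so no further input is needed.
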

\begin{proof}
There are two tautological inclusions $\iota_{r,s}: N_{r,s} \to S^3$ and $\iota_K: N_K \to S^3$. These are related to $\Gamma_{r,s}^\tp$ via the formula $\iota_{r,s} = \iota_K \circ \Gamma_{r,s}^\tp$. These inclusions all induce $\C$-linear maps on skein modules, and we will abuse notation and denote the induced maps with the same notation.

Let $\langle -,-\rangle_{r,s}: K_q(N_{r,s}) \otimes K_q(S^3\setminus K_{r,s}^\tp) \to \c[q^{\pm 1}]$ and $\langle -,-\rangle_K: K_q(N_K) \otimes K_q(S^3\setminus K) \to \c[q^{\pm 1}]$ be the pairings described in (\ref{knotpairing}). By definition, the evaluation $\langle f(u), \varnothing\rangle_{r,s}$ is equal to $\iota_{r,s}(f(u)) \in K_q(S^3) = \c[q^{\pm 1}]$. Similarly, $\langle f(y),\varnothing\rangle_K = \iota_K(f(y))$. 

Now if we combine these statements with Theorem \ref{thm_coloredjonespolys} and Lemma \ref{lemma_topskeincable}, we obtain
\begin{eqnarray*}
 J_n(K_{r,s}^\tp;q) &=& \langle S_{n-1}(u),\varnothing\rangle_{r,s} \\
 &=& \iota_{r,s}(S_{n-1}(u)) \\
 &=& \iota_K(\Gamma_{r,s}^\tp(S_{n-1}(u))) \\
 &=& (-q)^{rs(n^2-1)} \langle  \varnothing \cdot \gamma_{r,s}(S_{n-1}(y)),\varnothing\rangle_K
\end{eqnarray*}
The first equality follows from Theorem \ref{thm_coloredjonespolys} because $K_{r,s}^\tp$ is 0-framed, and the last equality follows from Lemma \ref{lemma_topskeincable} and the definitions of $\iota_K$ and $\langle -,-\rangle_K$. This completes the proof.
\end{proof}

This corollary gives a formula for $J_n(K_{r,s}^\tp)$ in terms of the skein module of $K$. However, the typical cabling formula that appears in the literature gives an expression in terms of Jones polynomials of $K$ - we next derive this from Corollary \ref{corollary_topcablingformula}.
\begin{corollary}
 The colored Jones polynomials of the cable $K_{r,s}^\tp$ are given by the formula
 \[
  J_n(K_{r,s}^\tp;q) = (-q)^{rs(n^2-1)}\sum_{j = (-n+1)/2}^{(n-1)/2}
  q^{-4rj(sj+1)} J_{2sj+1}(K; q)
 \]
\end{corollary}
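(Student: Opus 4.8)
The plan is to derive this corollary directly from Corollary \ref{corollary_topcablingformula} by computing the right-hand side $\langle \varnothing \cdot \gamma_{r,s}(S_{n-1}(y)), \varnothing\rangle_K$ explicitly in the quantum torus. The key point is that $\gamma_{r,s}(S_{n-1}(y))$ is an element of $K_q(T^2) \cong A_q^{\Z_2}$, and under the Frohman--Gelca isomorphism (Theorem \ref{fg00}) we can expand it in the basis $e_{m,l}+e_{-m,-l}$, after which the pairing is computed term by term using Theorem \ref{thm_coloredjonespolys}.

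First I would expand $S_{n-1}(y)$ in $A_q^{\Z_2}$: writing $S_{n-1}(u) = \sum_{j} u^{?}$ is not quite the right move; instead, since $y = Y + Y^{-1}$ and the $S_k$ are characters of irreducibles, one has the standard identity $S_{n-1}(Y+Y^{-1}) = \sum_{k} Y^{n-1-2k} = (Y^n - Y^{-n})/(Y - Y^{-1})$, so that $S_{n-1}(y)$ corresponds to $\sum_{j=(-n+1)/2}^{(n-1)/2} e_{0,2j}$ in $A_q$ (here $e_{0,2j} = Y^{2j}$, matching the index shift in the claimed formula — the summand $e_{0,2j}$ pairs with the $(2sj+1)$-colored Jones polynomial after applying $\gamma_{r,s}$). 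Then I apply $\gamma_{r,s} \in \SL_2(\Z)$: the induced map on $A_q$ sends $e_{0,1} \mapsto e_{r,s}$ (up to the standard $q$-power bookkeeping built into the $e_{m,l}$ normalization), and more generally $e_{0,2j} \mapsto e_{2rj, 2sj}$ possibly with a power of $q$ that I would need to track carefully using the relation $e_{r,s}e_{u,v} = q^{rv-us}e_{r+u,s+v}$ — this is where the factor $q^{-4rj(sj+1)}$ will come from.

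Next, having written $\gamma_{r,s}(S_{n-1}(y)) = \sum_j (\text{$q$-power})_j \, (e_{2rj,2sj} + e_{-2rj,-2sj})$ in $A_q^{\Z_2}$, I would observe that applying $\varnothing \cdot (-)$ and pairing against $\varnothing$ via \eqref{knotpairing} is, term by term, exactly the evaluation in Theorem \ref{thm_coloredjonespolys}. Specifically, the element $e_{2rj,2sj}+e_{-2rj,-2sj}$ is (up to Chebyshev normalization) the $(2rj, 2sj)$-curve on $T^2$, which is $\gcd$-many parallel copies of the $(r',s')$-curve where $r' = 2rj/d$, $s' = 2sj/d$; but more to the point, pushing it into the solid torus $N_K$ and evaluating against $\varnothing$ reproduces the colored Jones polynomial $J_{2sj+1}(K;q)$ via the $S_{2sj}(y)$ appearing in Theorem \ref{thm_coloredjonespolys}. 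I would convert the monomial basis $e_{0,2j}$ back to the Chebyshev basis $S_{2sj}$ as needed so that Theorem \ref{thm_coloredjonespolys} applies directly. Collecting the resulting $q$-powers and the overall prefactor $(-q)^{rs(n^2-1)}$ from Corollary \ref{corollary_topcablingformula} then yields the stated formula.

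The main obstacle will be the bookkeeping of the powers of $q$: getting from the $\SL_2(\Z)$-action on the quantum torus $A_q$ (with its $e_{r,s} = q^{-rs}X^rY^s$ normalization) to the clean exponent $-4rj(sj+1)$ in the statement requires being careful about (i) the $q$-cocycle picked up when $\gamma_{r,s}$ acts on products $e_{0,1}^{2j}$ versus $e_{0,2j}$, (ii) the framing correction implicit in passing between $S_{2sj}(y)$ and the curve $(2rj,2sj)_T$, and (iii) matching conventions between the Chebyshev polynomials $T_n$ (used in the Frohman--Gelca basis) and $S_n$ (used in the colored Jones evaluation). None of these is conceptually hard, but the sign and exponent must come out exactly right. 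I expect the cleanest route is to avoid $\gamma_{r,s}$ entirely at the level of curves and instead work purely algebraically inside $A_q^{\Z_2}$: expand $S_{n-1}(y)$ in the $e_{0,2j}$, apply the automorphism $\gamma_{r,s}$ using only the relation $e_{r,s}e_{u,v} = q^{rv-us}e_{r+u,s+v}$, and then invoke Corollary \ref{corollary_topcablingformula} and Theorem \ref{thm_coloredjonespolys} once at the end.
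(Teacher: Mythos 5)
Your outline matches the paper's proof: both expand $S_{n-1}(Y+Y^{-1})=\sum_{j}Y^{2j}$, push the sum through $\gamma_{r,s}$ inside the quantum torus, and then evaluate term by term. But there is one concrete missing ingredient at exactly the point you defer to ``bookkeeping.'' The exponent $-4rj(sj+1)$ splits as $-4rsj^{2}-4rj$, and only the first piece comes from the relation $e_{r,s}e_{u,v}=q^{rv-us}e_{r+u,s+v}$ (indeed $\gamma_{r,s}(Y^{2j})=e_{r,s}^{2j}=e_{2rj,2sj}=q^{-4rsj^{2}}X^{2rj}Y^{2sj}$, with no further cocycle). The remaining $q^{-4rj}$ --- the source of the ``$+1$'' in $sj+1$ --- cannot be obtained by working ``purely algebraically inside $A_q^{\Z_2}$,'' because it arises from the \emph{right-module} structure of $K_q(N_K)$ over $A_q^{\Z_2}$, i.e.\ from what happens to the meridian part $X^{2rj}$ of the curve when it is pushed into the solid torus. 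The paper supplies this via \cite[Lemmas 5.4, 5.5]{BS14}: $K_q(N_K)\cong M\e$ with $M=\C[Y^{\pm 1}]$, $f(Y)\cdot X=-f(q^{-2}Y)$, $f(Y)\cdot s=-f(Y^{-1})$, and $\varnothing$ corresponding to $(Y-Y^{-1})\e$; then $Y\cdot X^{2rj}=q^{-4rj}Y$ produces the missing factor, and $Y^{2sj+1}\e$ is converted back to $\delta\, S_{2sj}(y)$ so that Theorem \ref{thm_coloredjonespolys} applies. Without identifying this module (or an equivalent computation of how $X^{m}Y^{l}$ acts on the empty link in the solid torus), your plan cannot produce the exponent, so you should add this as an explicit lemma rather than treat it as routine.

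Two smaller corrections. First, $e_{2rj,2sj}+e_{-2rj,-2sj}$ is not ``$\gcd$-many parallel copies'' of a primitive curve: under Theorem \ref{fg00} it is $T_{2j}$ (Chebyshev of the \emph{first} kind) applied to the $(r,s)$-curve, which differs from $((r,s))^{2j}$ by lower-order terms; fortunately your argument never actually needs this geometric identification, since the evaluation is done algebraically. Second, there is no extra power of $q$ in $\gamma_{r,s}(e_{0,2j})=e_{2rj,2sj}$; the $e$-basis is normalized precisely so that the $\SL_2(\Z)$-action is monomial-to-monomial with coefficient $1$.
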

\begin{proof}
 We need to expand $\langle \varnothing \cdot \gamma_{r,s}(S_{n-1}(y)), \varnothing\rangle_K$ in terms of the Jones polynomials of $K$. To do this, we will use a description of the right $A_q^{\Z_2}$-module $K_q(N_K)$ from \cite[Lemmas 5.4, 5.5]{BS14}. 
 
 Let $M = \C[Y^{\pm 1}]$ be the right $A_q\rtimes \Z_2$ module with action 
 \[
  f(Y)\cdot Y = Yf(Y), \quad f(Y)\cdot X = -f(q^{-2}Y),\quad f(Y)\cdot s = -f(Y^{-1})
 \]
 By \cite[Lemma 5.5]{BS14}, the right $A_q^{\Z_2}$-module $K_q(N_K)$ is isomorphic to $M\e$, where $\e = (1+s)/2$. Because of the sign in the action of $s$, the generator of $M\e$ is $\delta\e := (Y - Y^{-1})\e$, and under this isomorphism, $\delta$ is equal to the empty link $\varnothing \in K_q(N_K)$. In the following computation we use the identity $S_{n-1}(A + A^{-1}) = \sum_{j = (-n+1)/2}^{(n-1)/2} A^{2j}$, which is a standard identity for Chebyshev polynomials.
 \begin{eqnarray*}
  \varnothing \cdot \gamma_{r,s}(S_{n-1}(Y+Y^{-1})) 
  &=& \delta \cdot \e \sum_{j = (-n+1)/2}^{(n-1)/2} \gamma_{r,s}(Y^{2j}) \e\\
  &=& 2Y\e \cdot \sum \left[q^{-rs}X^rY^s\right]^{2j} \e\\
  &=& 2Y\cdot \sum q^{-4rsj^2}X^{2jr}Y^{2sj}\e\\
  &=& 2\cdot \sum q^{-4rsj^2} q^{-4jr} X^{2jr} Y^{2sj + 1}\e\\
  &=& 2\cdot \sum q^{-4jr(sj+1)}Y^{2sj+1}\e \\
  &=& \delta \sum q^{-4jr(sj+1)}S_{2sj}(y)
 \end{eqnarray*}
Now since $K$ is $0$-framed, we can use Theorem \ref{thm_coloredjonespolys} and Corollary \ref{corollary_topcablingformula} to conclude
\[
 J_n(K^\tp_{r,s}) = (-q)^{rs(n^2-1)} \sum_{j = (-n+1)/2}^{(n-1)/2} q^{-4rj(sj+1)} J_{2sj+1}(K;q)
\]
\end{proof}
\begin{remark}
 Since both $K^\tp_{r,s}$ and $K$ are $0$-framed, the equality in the final corollary is exact (up to an overall sign $(-1)^n$), and is not just true up to a power of $q$. We also remark that under the substitutions $q^{-4}_{ours} = q_{theirs}$ and $n_{ours} = 1+b_{theirs}$, this last corollary agrees with \cite[Eq. (4.31)]{CD14}, up to an overall sign. Iterating this formula also gives a formula which is exact up to overall sign because $2sj+1$ has the same parity for any $j$.
\end{remark}

\paragraph{Algebraic cabling}
The \emph{algebraic cabling} procedure is similar to the topological one, but framing is dealt with differently. We will give a more abbreviated discussion that highlights the differences.
\begin{definition}
Let $K \subset S^3$ be a framed knot with \emph{any} framing and let $r,s \in \Z$ be relatively prime. Identify $S^1 \times S^1$ with the boundary of a neighborhood of $K$ such that the first copy of $S^1$ is a meridian of $K$ and the second is the longitude determined by the framing. (We note that since $K$ has \emph{any} framing, its longitude is \emph{not necessarily} the same as the (unique) longitude given by Lemma \ref{lemma_meridianlongitude}). Then the $(r,s)$ \emph{algebraic cable} $K^\alg_{r,s}$ of $K$ is the knot which is the image of the $(r,s)$-curve on $T$ and which is given the framing \emph{parallel to the torus} $T$.   
\end{definition}

Let $\gamma_{r,s} \in \SL_2(\Z)$ satisfy $\gamma(0,1) = (r,s)$. The mapping class group of the torus $T$ is $\SL_2(\Z)$, and this induces an action of $\SL_2(\Z)$ on $K_q(T)$. By construction, we have $\gamma_{r,s}(y) = (r,s)_T$, where $(r,s)_T$ is the $(r,s)$ curve on the torus $T$ and $y$ is the longitude of $K$. We remark that the framing of $(r,s)_T$ is parallel to the torus $T$, so $(r,s)_T$ is isotopic to $K^\alg_{r,s}$ as a framed knot.

\begin{definition}\label{def_algcable}
To give a precise statement, we give names to neighborhoods of $K$, its boundary torus, and its cable.
\begin{enumerate}
\item Let $N_{r,s}$ be a neighborhood of $K^\alg_{r,s}$, and identify $K_q(N_{r,s}) \cong \C[u]$ (as algebras) by setting the generator $u$ to be equal to the framed knot $K^\alg_{r,s}$ (which is \emph{not} $0$-framed). 
\item Let $N_T$ be a neighborhood of $T$. Identify $A_q^{\Z_2}$ with $K_q(N_T)$ by identifying $x = X+X^{-1}$ with the meridian and $y = Y+Y^{-1}$ with the longitude in $T$ which is given by the framing of $K$. (This is \emph{not} the topological longitude given by Lemma \ref{lemma_meridianlongitude}.)
\item Let $N_K$ be a neighborhood of $K$, and identify $K_q(N_K) \cong \C[y]$ (as algebras) by equating $y$ with the framed knot $K$. This means that the framing of the knot $K$ is parallel to the torus $T$. (This is notationally consistent as before.)
\end{enumerate}
\end{definition}
The following tautological inclusions still hold:
\[
N_{r,s} \stackrel{\iota}{\hookrightarrow} N_T \stackrel \mu \hookrightarrow N_K
\] 
We will write 
\begin{equation}\label{def_gammaalg}
\Gamma^\alg_{r,s} := \mu \circ \iota
\end{equation} 
for the composition of these inclusions. By functoriality of skein modules and the identifications above, the map $\Gamma^\alg_{r,s}$ induces a $\c$-linear map $\Gamma^\alg_{r,s}:\C[u] \to \C[y]$.

\begin{lemma}\label{lemma_algskeincable}
The induced $\C$-linear map $\Gamma^\alg_{r,s}: \C[u] \to \C[y]$ is given by 
\[
\Gamma^\alg_{r,s}(S_{n-1}(u)) = 1\cdot \gamma_{r,s}(S_{n-1}(y))
\]
\end{lemma}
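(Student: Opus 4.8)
The plan is to follow the template of the proof of Lemma \ref{lemma_topskeincable}, but to observe that the framing bookkeeping is now trivial by design, so no power of $-q$ appears. First I would compute $\iota(u)$. By definition of the algebraic cable, $u = K^\alg_{r,s}$ is the $(r,s)$-curve on $T$ \emph{with its framing parallel to $T$}. On the other hand, $\gamma_{r,s}(y)$ is the image of the longitude $y$ under $\gamma_{r,s}$, and since the $\SL_2(\Z)$-action on $K_q(T)$ comes from the mapping class group of the torus, $\gamma_{r,s}(y) = (r,s)_T$ is also the $(r,s)$-curve with framing parallel to $T$. Hence $\iota(u) = \gamma_{r,s}(y)$ as framed knots, with no framing correction needed — this is precisely the point of defining the algebraic cabling, and it is why the earlier remark (that $(r,s)_T$ is isotopic to $K^\alg_{r,s}$ as a framed knot) was inserted. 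Applying the Jones--Wenzl idempotent (i.e. $S_{n-1}$) to both sides of this equality of framed curves gives $\iota(S_{n-1}(u)) = \gamma_{r,s}(S_{n-1}(y))$.

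Next I would handle the map $\mu: K_q(N_T) \to K_q(N_K)$ exactly as in the proof of Lemma \ref{lemma_topskeincable}. The inclusion $\mu: N_T \hookrightarrow N_K$ realizes $K_q(N_K) = \C[y]$ as a right module over $K_q(N_T) = A_q^{\Z_2}$, and by the definition of that module structure the induced linear map sends $a \mapsto \varnothing \cdot a$, where $\varnothing$ is the empty link in $K_q(N_K)$. Under the identification $\C[y] = K_q(N_K)$ the empty link corresponds to $1$, so $\mu(a) = 1 \cdot a$. Composing, $\Gamma^\alg_{r,s}(S_{n-1}(u)) = \mu(\iota(S_{n-1}(u))) = 1\cdot \gamma_{r,s}(S_{n-1}(y))$, which is exactly the claimed formula.

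The only genuinely delicate point is the first step: being sure that the two framed curves $u = K^\alg_{r,s}$ and $\gamma_{r,s}(y)$ really do agree on the nose inside $K_q(N_T)$, i.e.\ that the framings match and not merely the underlying unframed curves. In the topological case this failed (the writhe-$0$ framing of $K$ forced the $(r,s)$-curve to acquire $rs$ units of framing when viewed as $\gamma_{r,s}(y)$, by the computation cited from \cite{MM08}), which is the source of the $(-q)^{rs(n^2-1)}$ factor there. In the algebraic case both curves are defined to carry the framing parallel to $T$, so the discrepancy is zero by construction; I would just spell this out carefully, referencing Definition \ref{def_algcable}(1) and (2) and the remark preceding it. Everything else — the Jones--Wenzl/$S_{n-1}$ step and the $\mu(a) = 1\cdot a$ step — is identical to, and can be cited from, the proof of Lemma \ref{lemma_topskeincable}.
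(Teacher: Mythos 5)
Your proposal is correct and follows the paper's own proof exactly: identify $\iota(u)$ with $\gamma_{r,s}(y)$ as framed curves (no framing correction, since both carry the framing parallel to $T$ by definition of the algebraic cable), then reuse the computation of $\mu$ from Lemma \ref{lemma_topskeincable}. Your extra care in spelling out why the framings match on the nose is a faithful elaboration of the paper's one-line appeal to the earlier remark.
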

\begin{proof}
We first compute the image $\iota(u)$. By definition, $\iota(u)$ is the $r,s$ curve on $T$ which is given the framing parallel to $T$. As remarked above, the element $\gamma_{r,s}(y) \in K_q(T)$ is isotopic to $\iota(u)$. This shows that $\iota(S_{n-1}(u)) = \gamma_{r,s}(S_{n-1}(y))$. The computation of the map $\mu$ is identical to the same computation in Lemma \ref{lemma_topskeincable}.
\end{proof}


\begin{corollary}\label{corollary_algcablingformula}
 The colored Jones polynomials of the cable $K_{r,s}^\alg$ are computed by the formula
 \[
  J_{n}( K_{r,s}^\alg;q) = (-q)^\bullet\langle \varnothing \cdot \gamma_{r,s}(S_{n-1}(y)), \varnothing \rangle_K
 \]
where the pairing $\langle -,-\rangle_K$ is the pairing associated to the knot $K$ described in (\ref{knotpairing}). The exponent in $(-q)^\bullet$ depends on $n$, $r$, $s$ and the framing of $K$.
\end{corollary}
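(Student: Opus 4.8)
The plan is to mimic the proof of Corollary~\ref{corollary_topcablingformula} essentially verbatim, replacing every appeal to Lemma~\ref{lemma_topskeincable} by an appeal to Lemma~\ref{lemma_algskeincable}, and tracking the framing discrepancies into the exponent $(-q)^\bullet$. First I would record the two tautological inclusions $\iota_{r,s}\colon N_{r,s}\to S^3$ and $\iota_K\colon N_K\to S^3$ and observe, exactly as before, that $\iota_{r,s}=\iota_K\circ\Gamma^\alg_{r,s}$; by functoriality of $K_q(-)$ this gives a corresponding identity of $\C$-linear maps on skein modules. Next I would note that, by the definition of the pairing in (\ref{knotpairing}), for $f(u)\in K_q(N_{r,s})$ and $g(y)\in K_q(N_K)$ we have $\langle f(u),\varnothing\rangle_{r,s}=\iota_{r,s}(f(u))\in K_q(S^3)=\C$ and $\langle g(y),\varnothing\rangle_K=\iota_K(g(y))$.

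The one genuine wrinkle — and the main obstacle — is that $K^\alg_{r,s}$ is \emph{not} $0$-framed, so Theorem~\ref{thm_coloredjonespolys} does not directly compute $J_n(K^\alg_{r,s};q)$ from $\langle S_{n-1}(u),\varnothing\rangle_{r,s}$ on the nose. Instead, if $w$ denotes the framing of $K^\alg_{r,s}$ in $S^3$ (a quantity determined by $n$, $r$, $s$, and the framing of $K$, via the same writhe computation from \cite{MM08} used in Lemma~\ref{lemma_topskeincable}), then inserting the $n$th Jones--Wenzl idempotent on a knot with $w$ units of framing and simplifying the $w$ full twists under the skein relations (again via \cite[Thm.~3]{MV94} / \cite[Thm.~3]{MV94}'s displayed diagram) multiplies the $0$-framed colored Jones polynomial by $(-q)^{w(n^2-1)}$. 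So $J_n(K^\alg_{r,s};q)=(-q)^{w(n^2-1)}\langle S_{n-1}(u),\varnothing\rangle_{r,s}$ for a suitable $w$; this is precisely what gets absorbed into the notation $(-q)^\bullet$, whose exponent is thereby allowed to depend on $n,r,s$ and the framing of $K$ exactly as the statement asserts.

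Putting these together, the computation runs
\begin{eqnarray*}
 J_n(K^\alg_{r,s};q) &=& (-q)^\bullet\,\langle S_{n-1}(u),\varnothing\rangle_{r,s} \\
 &=& (-q)^\bullet\,\iota_{r,s}(S_{n-1}(u)) \\
 &=& (-q)^\bullet\,\iota_K\bigl(\Gamma^\alg_{r,s}(S_{n-1}(u))\bigr) \\
 &=& (-q)^\bullet\,\langle \varnothing\cdot\gamma_{r,s}(S_{n-1}(y)),\varnothing\rangle_K,
\end{eqnarray*}
where the third equality uses $\iota_{r,s}=\iota_K\circ\Gamma^\alg_{r,s}$ and the last uses Lemma~\ref{lemma_algskeincable} together with the definitions of $\iota_K$ and $\langle-,-\rangle_K$. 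I would then close by noting that, unlike the topological case, here there is no reason to pin down the exponent precisely: its value is whatever framing correction is forced by the chosen framing of $K$ and the $rs$-twist picked up in passing from $\iota(u)$ to $\gamma_{r,s}(y)$, which is exactly the content of the parenthetical remark in the statement. If a fully explicit formula were wanted one could extract it by combining the $rs(n^2-1)$ term from Lemma~\ref{lemma_topskeincable}'s argument with the self-linking contribution of the framing of $K$, but for the purposes of this corollary the schematic $(-q)^\bullet$ suffices and keeping it schematic is what makes the proof short.
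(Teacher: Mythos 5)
Your proof is correct and follows exactly the paper's route: rerun the argument of Corollary \ref{corollary_topcablingformula} with Lemma \ref{lemma_algskeincable} in place of Lemma \ref{lemma_topskeincable}, and absorb the framing discrepancy of the non-$0$-framed cable into $(-q)^\bullet$ via the fact that the Jones--Wenzl idempotent kills cups and caps. The paper's own proof is just a terser statement of the same two observations.
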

\begin{proof}
The only difference between the proof of this and the proof of Corollary \ref{corollary_topcablingformula} is that the knot $K_{r,s}^\alg$ is not $0$-framed. However, since the Jones-Wenzl idempotent kills cups and caps, the evaluation $\langle S_{n-1}(u),\varnothing\rangle_{r,s}$ will be a power of $-q$ times the Jones polynomial $J_{n}(K_{r,s}^\alg;q)$.
\end{proof}

\subsection{The $\sl_2$ double affine Hecke algebra}\label{sec_dahabackground}
In this section we recall background about the double affine Hecke algebra $\HH_{q,t}$ of type $A_1$ that is required for Cherednik's construction and for our purposes later. The standard reference for the material in this section is \cite{Che05}.

\subsubsection{The Poincar\`e-Birkhoff-Witt property}
We first give a presentation of the algebra $\HH_{q,t}$.
\begin{definition}
Let $\HH_{q, t}$ be the algebra generated by $X^{\pm 1}$, $Y^{\pm 1}$, and $T$ subject to the relations
\begin{equation}
\label{daha}
TXT=X^{-1},\quad TY^{-1}T = Y, \quad XY=q^2YXT^2, \quad
(T-t)(T+t^{-1})=0
\end{equation}
\end{definition}

We remark that we have replaced the $q$ that is standard in the third relation with $q^2$ to agree with the standard conventions for the skein relations in Figure \ref{kbsm}. Also, the fourth relation implies that $T$ is invertible, with inverse $T^{-1} = T + \inv t - t$. Finally, if we set $t=1$, then the fourth relation reduces to $T^2 = 1$, and the third relation becomes $XY=q^2YX$. These imply that $\HH_{q,1}$ is isomorphic to the cross product $A_q\rtimes \Z_2$ (where the generator of $\Z_2$ acts by inverting $X$ and $Y$). 

One of the key propeties of $ \HH_{q,t} $ is the so-called PBW property, which says that,
for all $\,q,t \in \C$, the multiplication map yields a linear isomorphism
$$
\c[X^{\pm 1}] \otimes \c[\Z_2] \otimes \c[Y^{\pm 1}] \stackrel{\sim}{\to} \HH_{q,t}
$$
Another way of stating this property is that the elements $\,\{X^n T^\varepsilon Y^m\, :\, m,n \in \Z\,,\,\varepsilon = 0,\,1\}\,$
form a linear basis in $ \HH_{q,t} $. (See \cite{Che05}, Theorem~2.5.6(a).)

\subsubsection{The spherical subalgebra}\label{sph}
If $ t \ne \pm i $, the algebra $ \HH_{q,t} $ contains the idempotent $\e := (T+t^{-1})/(t+t^{-1})$ (the identity
$\,\e^2 = \e \,$ is equivalent to the last relation in (\ref{daha})). The \emph{spherical subalgebra} of $ \HH_{q,t} $ is
\begin{equation}
\label{salg}
\SH_{q,t} := \e\HH_{q,t}\e 
\end{equation}
Note that $ \A$ inherits its additive and multiplicative structure from $ \HH_{q,t} $, but the identity element of $\A$ is
$ \e $, which is different from $\, 1 \in \HH_{q,t} $. 
If $t^2q^{-2} - t^{-2}q^2$ is invertible, then the next lemma shows 
that $ \A $ is Morita equivalent to $ \HH_{q,t} $; the mutually inverse equivalences are given by
\begin{equation}
\label{mor}
\Mod\,\HH_{q,t} \to \Mod\,\A\, ,\ M \mapsto \e M\ ; \qquad
\Mod\,\A \to \Mod\,\HH_{q,t}\, ,\ M \mapsto \HH_{q,t}\,\e \otimes_{\A} M\ .
\end{equation}

\begin{lemma}
If $t^2q^{-2}-t^{-2}q^2$ is invertible, then $\HH_{q,t}\e \HH_{q,t} = \HH_{q,t}$.
\end{lemma}
\begin{proof}
Define $a := t^{-1}X-tX^{-1}$ and $\pi := YT^{-1}$. The statement then follows from the fact that $X\pi$ is invertible and from the following identities:
\begin{align*}
\pi^2 &= 1\\
\pi X &= q^2 X^{-1} \pi\\
a &=X\e - \e X^{-1} \\
\pi a + t^{-2}q^2 a\pi &= -t^{-1}(t^2q^{-2}-t^{-2}q^2)X\pi
\end{align*}
More precisely, let $I$ be the two-sided ideal generated by $\e$. Then the third identity shows $a$ is in $I$, the final identity follows from the first two, and if $(t^2q^{-2}-t^{-2}q^2)$ is a unit then $I$ contains a unit because of the final identity.
\end{proof}

In the case $t=1$, there is an isomorphism $A_q^{\Z_2} \cong \SH_{q,1}$ given by $w \mapsto \e \bar w \e$, where $w \in A_q^{\Z_2}$ is a symmetric word in $X,Y$, and $\bar w$ is the same word, viewed as an element of $\HH_{q,t}$. For later use we will need a presentation of $\SH_{q,t}$ which we give here. 
\begin{definition}
Let $B'_q$ be the algebra generated by $x,y,z$ modulo the following relations:
\begin{equation}\label{relationsforB'}
[x,y]_q = (q^2-q^{-2})z,\quad
[z,x]_q = (q^2-q^{-2})y,\quad
[y,z]_q = (q^2-q^{-2})x
\end{equation}
Also, define $B_{q,t}$ to be the quotient of $B'_q$ by the additional relation
\begin{equation}\label{casimir_rel}
q^2x^2 + q^{-2}y^2+ q^2z^2 -qxyz= \left( \frac t q - \frac q t\right)^2 + \left( q + \frac 1 q\right)^2
\end{equation}
\end{definition}

\begin{rmk}\label{rmkcascentral}
The element on the left hand side of (\ref{casimir_rel}) is central in $B'_q$ (see Corollary \ref{casimir_central_cor}), so $B_q$ is the quotient of $B'_q$ by a central character.
\end{rmk}

\begin{theorem}[\cite{Ter13}]\label{sphericalrelations}
There is an algebra isomorphism $f:B_{q,t} \to \SH_{q,t}$ defined by the following formulas:
\begin{align}\label{xyz}
x &\mapsto (X+\inv X)\e\notag\\
y &\mapsto (Y+\inv Y)\e\\
z &\mapsto q^{-1}(XYT^{-2}+X^{-1}Y^{-1}) \e\notag
\end{align}
\end{theorem}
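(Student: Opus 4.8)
The plan is to verify that the assignment $f$ in \eqref{xyz} is a well-defined algebra homomorphism, and then to show it is bijective by exhibiting an inverse (or at least by a dimension/PBW count). The first task is purely computational: I would take the three proposed images $f(x) = (X+X^{-1})\e$, $f(y) = (Y+Y^{-1})\e$, $f(z) = q^{-1}(XYT^{-2}+X^{-1}Y^{-1})\e$ inside $\SH_{q,t} = \e\HH_{q,t}\e$ and check that they satisfy the three $q$-commutator relations \eqref{relationsforB'} and the Casimir relation \eqref{casimir_rel}. All of these are finite identities in $\HH_{q,t}$ that can be reduced to normal form using the PBW basis $\{X^nT^\varepsilon Y^m\}$ together with the defining relations \eqref{daha}; in particular one repeatedly uses $TXT = X^{-1}$, $TY^{-1}T = Y$, $XY = q^2 YXT^2$, and $T^2 = 1 + (t-t^{-1})T$, and the fact that $\e T = T\e = t\e$, equivalently $\e X^{-1}\e = \e TXT\e$-type manipulations let one collapse everything back into the form $(\text{Laurent polynomial in }X,Y)\e$. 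Since the element of \eqref{casimir_rel} is central in $B'_q$ (Remark \ref{rmkcascentral}, proved later via Corollary \ref{casimir_central_cor}), it suffices to check that $f$ of the left-hand side equals $f$ of the right-hand side as a scalar multiple of $\e$, i.e.\ that the corresponding element of $\e\HH_{q,t}\e$ acts as the indicated constant.

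Having established that $f$ is a homomorphism, I would prove surjectivity and injectivity. For surjectivity: $\SH_{q,t}$ is spanned over $\C$ by elements $\e w \e$ with $w$ ranging over the PBW monomials $X^nT^\varepsilon Y^m$; using $\e T = t\e$ one reduces to $\e X^n Y^m \e$, and a filtration/induction argument on $|n|+|m|$ shows every such element lies in the subalgebra generated by $f(x), f(y), f(z)$ — the point is that $f(x)$ shifts the $X$-degree, $f(y)$ shifts the $Y$-degree, and $f(z)$ supplies the "mixed" term $\e XY\e$ needed to generate the cross terms, with lower-order corrections handled inductively. For injectivity I would compare Hilbert series: $B_{q,t}$ is, by Remark \ref{rmkcascentral}, the quotient of $B'_q$ by one central element of degree $2$ (with respect to the obvious grading $\deg x = \deg y = \deg z = 1$), and one checks $B'_q$ has the PBW-type Hilbert series of a polynomial ring in three variables (the relations \eqref{relationsforB'} being a "$q$-deformed $\sl_2$" presentation whose associated graded is commutative). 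On the other side, the PBW theorem for $\HH_{q,t}$ quoted in the excerpt gives the size of $\SH_{q,t}$. Matching these shows the surjection $f$ must be an isomorphism. Alternatively, and perhaps more cleanly, one can write down explicit formulas for $f^{-1}$ on generators of $\SH_{q,t}$ — e.g.\ recover $\e X\e$-type generators from $f(x), f(y), f(z)$ — and check directly that $f \circ f^{-1} = \id$ and $f^{-1}\circ f = \id$ on generators.

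The main obstacle is the verification of the Casimir relation \eqref{casimir_rel}: it is a degree-$3$ identity in the noncommutative algebra $\HH_{q,t}$ involving the awkward element $XYT^{-2}+X^{-1}Y^{-1}$, and bringing $q^2 f(x)^2 + q^{-2}f(y)^2 + q^2 f(z)^2 - q\, f(x)f(y)f(z)$ to normal form requires careful bookkeeping of the $T^{\pm 2}$ factors (using $T^{-1} = T + t^{-1} - t$ and $\e T = t\e$) and of the quadratic $q$-commutator corrections. I would organize this by first computing $f(z)$ in the cleaner form $\e(q^{-1}XYT^{-2} + q^{-1}X^{-1}Y^{-1})\e$ and simplifying $T^{-2}\e = t^{-2}\e$ on the right, so that effectively $f(z) = q^{-1}(t^{-2}\,\e XY\e + \e X^{-1}Y^{-1}\e)$ — wait, more precisely one must be careful about which side of $\e$ the $T$'s collapse on, so the honest computation keeps $f(z)$ as given and uses $T\e=\e T = t\e$ only at the outermost layer. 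This is the step where a computer algebra check (as the acknowledgements indicate was used) is the natural safeguard. Everything else — the three $q$-commutators and the bijectivity argument — is routine once the normal-form algorithm in $\HH_{q,t}$ is set up.
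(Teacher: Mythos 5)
Your plan for the first half — reducing the three $q$-commutators and the Casimir identity to normal form in $\HH_{q,t}$ via the PBW basis and the relations $T\e=\e T=t\e$, $T^{-1}=T+t^{-1}-t$ — is exactly what the paper does for that part: it simply asserts that well-definedness ``can be checked directly.'' For bijectivity, however, the paper does \emph{not} give an argument at all; it cites Terwilliger \cite{Ter13} (with \cite[Thm.\ 2.20]{BS14} for the dictionary of conventions). So your second half is an attempt to supply a proof the paper outsources, and it is there that the sketch is loose. Two concrete inaccuracies: the Casimir element $q^2x^2+q^{-2}y^2+q^2z^2-qxyz$ has top degree $3$, not $2$ (the relevant point for a Hilbert-series count is rather that the relation lets you rewrite $z^2$ in terms of $xyz$, $x^2$, $y^2$ and a constant, so that $B_{q,t}$ is spanned by monomials $x^ay^bz^c$ with $c\le 1$); and the associated graded of $B'_q$ is not commutative — the leading terms of the relations give $xy=q^{-2}yx$ etc., i.e.\ a quantum polynomial ring — though the Hilbert-series conclusion you want still holds, this needs the PBW property of $B'_q$ itself, which you assert rather than prove (in this paper it comes for free from Theorem \ref{thm_bp}, since the non-crossing links form a basis of $K_q(T')$).

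The surjectivity step also has a gap as written: you cannot reduce $\e X^nTY^m\e$ to $\e X^nY^m\e$ by ``using $\e T=t\e$,'' because the $T$ sits between $X^n$ and $Y^m$; you must instead substitute $T=(t+t^{-1})\e-t^{-1}$ in the middle, which produces terms $\e X^n\e Y^m\e$ and forces you to first show that $\e\,\C[X^{\pm1}]\,\e$ is generated by $f(x)$ (and similarly for $Y$) before running the induction on the mixed monomials. None of this is unfixable — it is essentially the argument in \cite{Ter13}/\cite{BS14} — but as it stands the bijectivity half of your proposal is an outline with wrong degree bookkeeping rather than a proof, whereas the homomorphism half coincides with the paper's (purely computational) treatment.
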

\begin{proof}
The fact that (\ref{xyz}) gives a well-defined algebra map can be checked directly, and the fact that it is an isomorphism is proved in \cite{Ter13}. (See also \cite[Thm. 2.20]{BS14} for the precise conversion between Terwilliger's notation and ours.)
\end{proof}

\begin{remark}
 A-priori, it isn't obvious that the elements on the right hand side of (\ref{xyz}) are contained in $\e \HH_{q,t} \e$. However, short computations show that if we take $a \in \HH_{q,t}$ to be either $X+X^{-1}$, $Y+Y^{-1}$, or $XYT^{-2}+X^{-1}Y^{-1}$, then $a\e = \e a$, and this implies $a \e = \e a \e \in \e \HH_{q,t}\e$.
\end{remark}

For later reference, we include a lemma that is useful for establishing isomorphisms of $B'_q$-modules.
\begin{lemma}\label{lemma_invsubalgebraiso}
 Suppose that $M$ and $N$ are modules over $B'_q$, and that as a $\C[x]$-module $M$ is generated by elements $\{m_i \in M\}$. Furthermore, suppose that $f:M \to N$ is a morphism of $\C[x]$-modules that satisfies $f(ym_i) = yf(m_i)$ and $f(zm_i) = zf(m_i)$. Then $f$ is a morphism of $B'_q$-modules.
\end{lemma}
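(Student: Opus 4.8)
The plan is to verify the two missing module-compatibility conditions directly, using the defining relations \eqref{relationsforB'} of $B'_q$ to propagate the hypotheses from the generators $m_i$ to all of $M$. Since $M$ is generated over $\C[x]$ by the $m_i$, a general element of $M$ is a $\C$-linear combination of elements of the form $x^k m_i$, so by linearity it suffices to show that $f(y\cdot x^k m_i) = y\cdot f(x^k m_i)$ and $f(z\cdot x^k m_i) = z\cdot f(x^k m_i)$ for all $k\ge 0$ and all $i$.

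First I would record the two $q$-commutation relations in a form convenient for induction: from $[x,y]_q = (q^2-q^{-2})z$, written out as $q x y - q^{-1} y x = (q^2 - q^{-2}) z$ (with whatever normalization of $[\,,\,]_q$ is in force — I would fix the convention explicitly at the start), we get $y x = q^2 x y - (q^4 - 1) q^{-1} z$, or more usefully $y x = (\text{linear in } x,z \text{ with leading term } q^2 x y)$; similarly $[z,x]_q = (q^2-q^{-2})y$ gives $z x = (\text{linear in } x,y)$ with leading $xz$-type term. These let me "move $x$'s to the left past $y$ and $z$", at the cost of producing extra terms involving $y$, $z$ (and lower powers of $x$) but crucially \emph{no new generators}. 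Concretely, an easy induction on $k$ shows that $y\,x^k m_i$ can be rewritten as $x^k\,(y m_i)$ plus a $\C[x]$-linear combination of $x^j(z m_i)$ with $j<k$, and likewise $z\,x^k m_i$ equals $x^k(z m_i)$ plus a $\C[x]$-linear combination of $x^j(y m_i)$ with $j<k$. Since $f$ is a $\C[x]$-module map and, by hypothesis, $f(y m_i) = y f(m_i)$ and $f(z m_i) = z f(m_i)$, applying $f$ to these rewritten expressions and then running the same rewriting backwards in $N$ (using the identical relations, which hold in $N$ because $N$ is also a $B'_q$-module) yields $f(y x^k m_i) = y f(x^k m_i)$ and $f(z x^k m_i) = z f(x^k m_i)$. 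Thus $f$ commutes with the action of all three generators $x,y,z$ of $B'_q$, hence with all of $B'_q$, and so $f$ is a morphism of $B'_q$-modules.

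The main obstacle — really the only point requiring care — is the bookkeeping in the induction that straightens $y x^k$ and $z x^k$: one must check that the relations \eqref{relationsforB'} only ever trade an $x$ on the wrong side for terms that are again $\C[x]$-linear in $\{y m_i, z m_i, m_i\}$, with no appearance of $x m_i$-type obstructions beyond what $\C[x]$-linearity of $f$ already handles, and in particular that no term involving a product like $yz$ or $y^2$ acting on $m_i$ is generated. Inspecting the relations, the $q$-commutators $[x,y]_q$ and $[z,x]_q$ indeed have right-hand sides linear in the generators (namely $z$ and $y$ respectively), so each straightening step lowers the $x$-degree while keeping the "type" among $\{m_i, y m_i, z m_i\}$; the third relation $[y,z]_q = (q^2-q^{-2})x$ is never invoked. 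A clean way to organize this is to work in the free left $\C[x]$-module on symbols $\{e_i, (ye)_i, (ze)_i\}$, define the straightening map there, and observe $f$ is compatible with the two resulting $\C[x]$-linear "action" maps by construction; I would present it this way to avoid an explicit double induction. The remaining verifications are routine.
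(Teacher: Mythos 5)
Your proof is correct and follows essentially the same route as the paper's: write a general element of $M$ as a $\C[x]$-combination of the $m_i$ and use the first two $q$-commutation relations of \eqref{relationsforB'} to move powers of $x$ to the left past $y$ and $z$, staying within $\C[x]$-combinations of $\{m_i, ym_i, zm_i\}$. (Only a minor bookkeeping slip: for $k\ge 2$ the straightening of $y\,x^k m_i$ also produces lower-order $x^j(ym_i)$ terms and a scalar $q^{2k}$ on the leading term, not just $x^j(zm_i)$ terms — but your later remark about preserving the ``type'' already covers this, so the argument is unaffected.)
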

\begin{proof}
 By definition, the elements $x,y,z \in B'_q$ satisfy the commutation relations (\ref{relationsforB'}).
An arbitrary element of $M$ can be written as $m = \sum_{i=1}^n c_i p_i(x) m_i$, and using the $\C[x]$-linearity of $f$ and the commutation relations, powers of $x$ in the expressions $ym$ and $zm$ can inductively be moved to the left. This shows that $f(ym) = yf(m)$ and $f(zm) = zf(m)$ for arbitrary $m \in M$, which completes the proof.
\end{proof}

%
%

%
\subsubsection{The standard and sign polynomial representation}
\label{dunkl}
Our definition of $\HH_{q,t}$ was in terms of generators and relations. However, $\HH_{q,t}$ can also be viewed as a family of subalgebras of $\End_\C(\C[X,X^{-1}])$ using the construction 
we describe in this section. We first introduce the following linear operators on $\c[X^{\pm 1}]$:
\begin{equation}
\label{op}
\sx[f(X)] := X f(X)\ ,\quad \s[f(X)] := f(X^{-1})\ ,\quad \sy[f(X)] := f(q^{-2}X)\  ,
\end{equation}
Notice that these operators are invertible and satisfy the relations
\begin{equation}
\label{rop}
\s^2 =1 \ , \quad
\s\,\sx = \sx^{-1} \s \ , \quad
\s\,\sy = \sy^{-1}\s \ ,\quad
\sx\,\sy = q^2\,\sy\,\sx\ .
\end{equation}
Thus, they define a representation of the crossed product $A_q \rtimes \Z_2$. The action of $ \HH_{q,t} $ on $ \c[X^{\pm 1}]$
can be described by
\begin{equation}
\label{du}
X \mapsto \sx \ ,\quad
T \mapsto \sT := t\cdot\s + \frac{t-t^{-1}}{X^2-1}(\s-1) \ ,\quad
Y \mapsto  \sy\,\s\,\sT
\end{equation}
The operator $ \sT $ is called the {\it Demazure-Lusztig operator}
(cf. \cite{Che05}, (1.4.26), (1.4.27)). Formally it is an operator on rational functions $\c(X)$, but  it preserves the subspace $\c[X^{\pm 1}] \subset \C(X)$ because $f(X^{-1})-f(X)$ is always divisible (in $\c[X^{\pm 1}]$) by $X^2-1$.

These assignments can be rephrased as follows: let
$\D_q$ be the localization of $A_q\rtimes \Z_2$ with respect to the multiplicative set consisting of all nonzero polynomials in $X$. Then formulas \eqref{du} give an embedding
\begin{equation}
\label{duem}
\Theta_{q,t}:\, \HH_{q,t} \into  \D_q
\end{equation}

The \emph{sign representation} is defined similarly - we first define operators
\begin{equation}
\sx[f(X)] := X f(X)\ ,\quad \s_-[f(X)] := -f(X^{-1})\ ,\quad \sy_-[f(X)] := -f(q^{-2}X)
\end{equation}
These operators give $\C[X^{\pm 1}]$ the structure of an $A_q\rtimes\Z_2$-module. We then define
\begin{equation}\label{eq_signde}
X \mapsto \sx \ ,\quad
T \mapsto \sT_- := -t^{-1}\cdot\s_- + \frac{t-t^{-1}}{X^2-1}(\s_-+1) \ ,\quad
Y \mapsto  \sy_-\,\s_-\,\sT_-
\end{equation}
It can be checked directly that these assignments give an embedding $\Theta^-_{q,t}:\H_{q,t} \to \D_q$.

\begin{definition}
 Let $P^+,P^- := \C[X^{\pm 1}]$ be the $\H_{q,t}$-modules given by formulas (\ref{du}) and (\ref{eq_signde}), respectively. We refer to these as the \emph{polynomial representation} and the \emph{sign representation}.
\end{definition}

\begin{remark}
 The modules $P^+$ and $P^-$ can also be viewed as induced modules as follows. Let $\H_Y \subset \H_{q,t}$ be the subalgebra generated by $Y^{\pm 1}$ and $T$. Then $\H_Y$ is the affine Hecke algebra (of type $A_1$) and it has two natural 1-dimensional modules, $\C_t$ and $\C_{-t^{-1}}$. On the module $\C_t$ both $Y$ and $T$ act by multiplication by $t$, and on $\C_{-t^{-1}}$ they act by multiplication by $-t^{-1}$. Then $P^+$ and $P^-$ are the $\H_{q,t}$-modules which are induced from $\C_t$ and $\C_{-t^{-1}}$, respectively.
\end{remark}

\subsubsection{The symmetric polynomial representation}\label{sympolyrep}
Under the Morita equivalence \eqref{mor}, the $ \HH_{q,t}$-module $ P^+ $ corresponds to the $ \A$-module
$$
\e P^+ = \e\H_{q,t}/(\e\HH_{q,t} \, (T-t) + \e\HH_{q,t}\, (Y-t)) \cong \e\cdot \c[X^{\pm 1}]
$$
Now, note that the subspace $ \e\c[X^{\pm 1}] \subset \c[X^{\pm 1}] $ is the image of the projector $ \e = (t^{-1}+T)/(t^{-1}+t)$
and hence the kernel of $1 - \e$. By the Bernstein-Zelevinsky lemma (see \cite{Che05}, p.~202),
the kernel of the operator $ T - t $ (acting on $ \c[X^{\pm 1}] $ as in \eqref{du}) is exactly $\c[X^{\pm 1}]^{\Z_2} = \c[X+X^{-1}]$, the subspace of symmetric Laurent polynomials. 
Thus, for {\it all} parameters $q,t \in \C^*$, the spherical algebra  $ \A $ acts on $ \c[X+X^{-1}] $ via the
identification
\begin{equation}
\label{sphr}
\c[X+X^{-1}] = \e\cdot \c[X^{\pm 1}] \cong \e P^+ \quad
f(X+X^{-1}) \leftrightarrow \e f(X+X^{-1}) \leftrightarrow [\e f(X+X^{-1})]
\end{equation}

Since $Y+Y^{-1}$ commutes with $\e$ it preserves the subspace $\C[X+X^{-1}]\subset \C[X^{\pm 1}]$. This operator is called the {\it Macdonald operator}, and it plays a fundamental role in the representation-theoretic approach
to the theory of Macdonald polynomials. A computation shows that this operator can be written as
\begin{equation}
\label{mac}
L_{q,t} := Y + Y^{-1} = \frac{tX^{-1}-t^{-1}X}{X^{-1}-X}\,\sy + \frac{t^{-1}X^{-1}-tX}{X^{-1}-X}\,\sy^{-1}\ .
\end{equation}
\begin{remark}
 Under the identification $\SH_{q,t=1} \cong K_q(T^2)$ in Theorem \ref{fg00}, the Macdonald operator is identified with the longitude of the torus.
\end{remark}

\subsubsection{Rank 1 Macdonald polynomials}
\label{macd}
We briefly review the definition of Macdonald polynomials of type $A_1$ (a.k.a.~ the Rogers or
$(q,t)$-ultraspherical polynomials, see \cite{AI83}). This family of orthogonal polynomials depends on two parameters $ q,t \in \C^*$
and forms a basis in the space $ \c[X+X^{-1}] = \C[x] $ of symmetric Laurent polynomials\footnote{The Macdonald polynomials of type $A_1$ constitute a subfamily of the four-parameter family of the 
so-called {\it Askey-Wilson polynomials}, which is the most general family of orthogonal
polynomials of one variable. The Askey-Wilson polynomials are controlled by the DAHA of type $CC^\vee$.}. These polynomials can be naturally defined for an arbitrary root system; 
they possess some remarkable
properties which were first conjectured by I.~G.~Macdonald and proved by Cherednik (see \cite[Sect. 1.4]{Che05} for a discussion of these polynomials and the Macdonald conjectures).

The Macdonald polynomials can be defined as solutions of the eigenvalue problem for the Macdonald operator:
$$
L_{q,t}[\varphi] = (\lambda + \lambda^{-1})\,\varphi
$$
\begin{theorem}
\label{thdef}
There is a unique family of symmetric Laurent polynomials $\{p_0(x),\,p_1(x),\,\ldots\} \subset \c[X^{\pm 1}]^{\Z_2}=\C[x] $, satisfying
\begin{eqnarray*}
&(a)& L_{q,t}[p_n] = (tq^{2n} + t^{-1} q^{-2n})\,p_n \ ,\\
&(b)& p_0(x) = 1 \ ,\quad p_n(x) = X^n + X^{-n} + \sum_{|m| < n}\,c_m\,x^m\ ,\ n \ge 1
\end{eqnarray*}
\end{theorem}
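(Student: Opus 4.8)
The plan is to prove existence and uniqueness of the family $\{p_n\}$ directly by a triangularity/eigenvalue argument, using the explicit form of the Macdonald operator $L_{q,t}$ from \eqref{mac}. First I would record the action of $L_{q,t}$ on the natural basis $m_n := X^n + X^{-n}$ of $\c[X^{\pm 1}]^{\Z_2} = \C[x]$. A short computation from \eqref{mac} shows that $L_{q,t}[m_n]$ is again a symmetric Laurent polynomial, and moreover that $L_{q,t}$ acts \emph{triangularly} with respect to the ordering of the $m_n$ by degree: one finds
\[
L_{q,t}[m_n] = (tq^{2n} + t^{-1}q^{-2n})\, m_n + (\text{lower-degree terms}),
\]
where ``lower-degree'' means a $\C$-linear combination of $m_j$ with $j < n$ (in fact $j = n-2$ only, but I would not need the sharper statement). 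The leading eigenvalue $tq^{2n}+t^{-1}q^{-2n}$ is read off from the coefficient of the top power $X^n$: applying $\hat y$ multiplies $X^n$ by $q^{-2n}$ and the prefactor $\frac{tX^{-1}-t^{-1}X}{X^{-1}-X}$ contributes $t^{-1}$ in the $X \to \infty$ limit of the leading term, while $\hat y^{-1}$ together with its prefactor contributes $t$; combining gives the claimed eigenvalue on $X^n$, and symmetrically on $X^{-n}$.

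Given triangularity, existence and uniqueness follow from a standard argument. For uniqueness: if $p_n$ and $p_n'$ both satisfy (a) and (b), their difference $p_n - p_n'$ lies in $\mathrm{span}\{m_j : j < n\}$ (by the normalization (b)) and is an eigenvector of $L_{q,t}$ with eigenvalue $tq^{2n}+t^{-1}q^{-2n}$; but on $\mathrm{span}\{m_j : j<n\}$ the operator $L_{q,t}$ is again triangular with eigenvalues $tq^{2j}+t^{-1}q^{-2j}$ for $j<n$, and since these eigenvalues are pairwise distinct from $tq^{2n}+t^{-1}q^{-2n}$ (here one needs $q$ not a root of unity, or at least $q^2$ of infinite order — this is the one place where a genericity hypothesis on $q$ enters, and I would flag it or restrict $q$ accordingly), the only such eigenvector is $0$. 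For existence: build $p_n$ by downward induction, starting from $m_n$ and successively subtracting eigen-corrections. Concretely, write $L_{q,t}[m_n] = \mu_n m_n + \sum_{j<n} a_j m_j$ with $\mu_n = tq^{2n}+t^{-1}q^{-2n}$; one seeks $p_n = m_n + \sum_{j<n} b_j m_j$ with $L_{q,t}[p_n] = \mu_n p_n$, which unwinds (using the already-constructed $p_j$ as a better basis for the lower-degree subspace, on which $L_{q,t}$ is triangular with eigenvalues $\mu_j$) into a system $(\mu_j - \mu_n) b_j = (\text{known from lower data})$ that is solvable precisely because $\mu_j \neq \mu_n$ for $j < n$.

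I expect the main obstacle to be purely bookkeeping rather than conceptual: carefully verifying the triangularity claim and the exact leading eigenvalue from the somewhat unwieldy expression \eqref{mac}, in particular handling the rational prefactors $\frac{tX^{-1}-t^{-1}X}{X^{-1}-X}$ and confirming that $L_{q,t}$ does preserve $\C[X^{\pm 1}]^{\Z_2}$ (no spurious poles at $X = \pm 1$ survive after summing the two terms). A clean way to organize this is to note that $L_{q,t}$ is, up to the normalizing idempotent $\e$, just the operator $Y + Y^{-1}$ acting on $\C[X^{\pm 1}]$ and restricted to symmetric polynomials, so polynomiality and $\Z_2$-invariance of the output are automatic from the representation-theoretic description in Section \ref{sympolyrep} (the Bernstein–Zelevinsky lemma guarantees $Y+Y^{-1}$ preserves $\C[X+X^{-1}]$), and only the triangularity and eigenvalue computation need to be done by hand on the leading monomials. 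The remaining genericity subtlety in $q$ I would either absorb into a standing hypothesis or dispatch by the observation that the construction is polynomial in $q^{\pm 1}, t^{\pm 1}$ after clearing the denominators $\mu_j - \mu_n$, hence extends by continuity where those denominators are nonzero.
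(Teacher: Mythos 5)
The paper does not prove this statement at all: it is recalled as standard background, with a pointer to \cite[Sect.~1.4]{Che05}, so there is no in-paper argument to compare against. Your triangularity proof is the standard one and is essentially correct: writing the numerator of $L_{q,t}[m_n]$ over the common denominator $X^{-1}-X$, one gets $-\mu_n(X^{n+1}-X^{-n-1})+\nu_n(X^{n-1}-X^{-n+1})$ with $\mu_n=tq^{2n}+t^{-1}q^{-2n}$ and $\nu_n=tq^{-2n}+t^{-1}q^{2n}$; the numerator is antisymmetric under $X\mapsto X^{-1}$, hence divisible by $X-X^{-1}$, which settles polynomiality, $\Z_2$-invariance, triangularity, and the leading eigenvalue all at once. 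Two small corrections. First, your parenthetical ``in fact $j=n-2$ only'' is false: dividing out the denominator gives $L_{q,t}[m_n]=\mu_n m_n+(\mu_n-\nu_n)(m_{n-2}+m_{n-4}+\cdots)$, so \emph{all} lower $m_{n-2k}$ appear; you correctly disclaim reliance on this, so nothing breaks. Second, and more substantively, the genericity condition is not only on $q$: one computes $\mu_n-\mu_j=(q^{2n}-q^{2j})\bigl(t-t^{-1}q^{-2(n+j)}\bigr)$, so the eigenvalues also collide when $t^2q^{2(n+j)}=1$ even for $q$ of infinite order. This is exactly the pole visible in the paper's displayed $p_2(x)=X^2+X^{-2}+(1-t^2)(1+q^4)/(1-t^2q^4)$, and it is why the paper says the $p_n$ ``depend rationally on $q,t$.'' Your closing remark about clearing the denominators $\mu_j-\mu_n$ and extending by continuity handles this correctly once the $t$-dependence of those denominators is acknowledged, so the fix is just to state the standing hypothesis as $q^{2(n-j)}\neq 1$ and $t^2q^{2(n+j)}\neq 1$ for $0\le j<n$ rather than as a condition on $q$ alone.
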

With this choice of normalization, the Macdonald polynomials depend rationally on $q,t$. For example, the first members of this family are
$$
p_0(x) = 1\ ,\quad p_1(x) = X + X^{-1} \ ,\quad p_2(x) = X^2 + X^{-2} + (1-t^2)(1+q^4)/(1- t^2 q^4)
$$

\subsubsection{The Dunkl-Cherednik pairing}\label{sec_pairings}
Here we recall an important property of the polynomial representation which is analogous to the Shapovalov form from Lie theory. First we define an anti-automorphism of $\H_{q,t}$:
\begin{equation}\label{eq_antiautomorphism}
\phi: \HH_{q,t} \to \HH_{q,t}\ ,\quad X \mapsto Y^{-1}\ ,\quad Y \mapsto X^{-1}\ , \quad T \mapsto T\ ,
\end{equation}
We write $\phi(P^+)$ for the twist of the module $P^+$ by this anti-automorphism. 

\begin{lemma}\label{lemma_prepairing}
 The vector space $\phi(P^+) \otimes_{\H_{q,t}} P^+$ is 1-dimensional.
\end{lemma}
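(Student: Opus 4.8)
The plan is to compute the one-dimensional space $\phi(P^+) \otimes_{\H_{q,t}} P^+$ explicitly by using the presentation of $P^+$ as an induced module and the PBW property. Recall from the excerpt that $P^+ = \Ind_{\H_Y}^{\H_{q,t}} \C_t$, where $\H_Y$ is the affine Hecke subalgebra generated by $Y^{\pm 1}$ and $T$, and $\C_t$ is the one-dimensional module on which $Y$ and $T$ both act by $t$. By the PBW decomposition $\c[X^{\pm 1}] \otimes \c[\Z_2] \otimes \c[Y^{\pm 1}] \xrightarrow{\sim} \H_{q,t}$, we have a linear isomorphism $P^+ = \H_{q,t} \otimes_{\H_Y} \C_t \cong \c[X^{\pm 1}]$ as a left $\c[X^{\pm 1}]$-module, generated over $\c[X^{\pm1}]$ by the image of $1$.

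First I would identify the twisted module $\phi(P^+)$ concretely: since $\phi$ sends $X \mapsto Y^{-1}$, $Y \mapsto X^{-1}$, $T \mapsto T$, and $\phi$ restricted to the subalgebra generated by $Y^{\pm1}, T$ lands in the subalgebra generated by $X^{\pm 1}, T$, the twist $\phi(P^+)$ is a \emph{right} $\H_{q,t}$-module which, as a right module over $\c[Y^{\pm1}]$, is free of rank one, cyclic generated by the vector corresponding to $1 \in P^+$; and $T$ acts on this generator (on the right) by $t$ as well, since $\phi(T) = T$. Thus $\phi(P^+)$ is, as a right module, $\C_t \otimes_{\H_Y'} \H_{q,t}$ where $\H_Y'$ is now the ``opposite'' affine Hecke subalgebra generated by $X^{\pm 1}$ and $T$.

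Then I would compute the tensor product. Writing $\phi(P^+) \otimes_{\H_{q,t}} P^+ = (\C_t \otimes_{\H_Y'} \H_{q,t}) \otimes_{\H_{q,t}} (\H_{q,t} \otimes_{\H_Y} \C_t) \cong \C_t \otimes_{\H_Y'} \H_{q,t} \otimes_{\H_Y} \C_t$, the problem reduces to understanding the $(\H_Y', \H_Y)$-bimodule $\H_{q,t}$ up to the relations imposed by both one-dimensional characters. Using the PBW basis adapted to the decomposition $\H_{q,t} = \H_Y' \cdot \c[\Z_2] \cdot$ (part of $\H_Y$) — more carefully, $\H_{q,t} = \c[X^{\pm1}] \otimes \c[\Z_2] \otimes \c[Y^{\pm 1}]$, and $\H_Y'$ contains $\c[X^{\pm1}]$ together with $T$ (hence the $\Z_2$ part), while $\H_Y$ contains $\c[Y^{\pm1}]$ together with $T$ — I expect the double coset structure to collapse the whole space to the span of a single element, namely the class of $1 \in \H_{q,t}$. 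Concretely, tensoring on the left with $\C_t$ over $\H_Y'$ kills the $X^{\pm1}$ and $T$ degrees of freedom (forcing $X \rightsquigarrow$ the scalar by which $\phi^{-1}(X) = Y^{-1}$ acts, etc.) leaving $\c[Y^{\pm1}]$; then tensoring on the right with $\C_t$ over $\H_Y$ kills the remaining $\c[Y^{\pm1}]$ degrees of freedom, leaving a one-dimensional space. I would make this precise by exhibiting a surjection $\C \twoheadrightarrow \phi(P^+)\otimes_{\H_{q,t}} P^+$ sending $1$ to $1 \otimes 1$, using the generators-and-relations, and then a nonzero linear functional on the tensor product (equivalently, an $\H_{q,t}$-balanced bilinear pairing $\phi(P^+) \times P^+ \to \C$) to show it is injective; the existence of the latter is exactly the Dunkl-Cherednik pairing (the Shapovalov-type form), which can be written down as $\langle f, g \rangle = (\text{constant term of } \overline{f} \cdot g)$ against the appropriate weight, or equivalently via the evaluation pairing coming from $\e P^+$ and Macdonald polynomials.

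The main obstacle I anticipate is showing the tensor product is \emph{nonzero} (one-dimensionality then follows from the surjection above). Surjectivity of $\C \to \phi(P^+) \otimes P^+$ is a formal consequence of both modules being cyclic and generated by the respective images of $1$ and the relation $\phi(T) = T$ matching the two $T$-actions; but ruling out that everything collapses to zero requires producing the balanced pairing explicitly. I would do this by using the symmetric picture: under the Morita equivalence, $\e P^+ \cong \C[x]$ carries the nondegenerate Macdonald inner product $\langle p_m, p_n\rangle \propto \delta_{mn}$, and the anti-automorphism $\phi$ is compatible with this form (it is essentially the adjointness property $\langle L_{q,t} f, g\rangle = \langle f, L_{q,t} g\rangle$ of the Macdonald operator, together with $\phi(X) = Y^{-1}$), so $\langle 1, 1 \rangle \neq 0$ gives the needed nonvanishing; pushing this back up through the Morita equivalence and the idempotent $\e$ yields the $\H_{q,t}$-balanced pairing on $\phi(P^+) \otimes P^+$, completing the argument.
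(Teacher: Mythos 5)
Your two-step plan --- (i) use PBW plus cyclicity of both modules to produce a surjection $\C \twoheadrightarrow \phi(P^+)\otimes_{\H_{q,t}}P^+$ sending $1\mapsto 1\otimes 1$, and (ii) exhibit a nonzero $\H_{q,t}$-balanced pairing to show the target is nonzero --- is exactly what the paper's one-line proof is pointing at when it says the lemma ``follows from the PBW property combined with some computations (see \cite[Sec.~1.4.2]{Che05})''; you have simply unpacked it. Step (i) is complete as written: any $h\in\H_{q,t}$ is a sum of terms $f(X)T^{\varepsilon}g(Y)$, the factors $g(Y)$ and $T^{\varepsilon}$ act on $1\in P^+$ by scalars, and the remaining $f(X)$ can be moved across the tensor product, where it acts on the generator of $\phi(P^+)$ through $\phi(X)=Y^{-1}$, i.e.\ by $f(t^{-1})$. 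The one place to be careful is in step (ii): the constant-term (Macdonald) inner product is \emph{not} $\phi$-balanced --- the adjoint of multiplication by $X$ with respect to that form is again an operator preserving $\C[X^{\pm1}]$-degrees, not $\phi(X)=Y^{-1}$ --- and self-adjointness of $L_{q,t}$ alone does not give balancedness for all of $\H_{q,t}$. The pairing you want is the other one you name, the evaluation (Dunkl--Cherednik) pairing $\{f,g\}:=\epsilon\bigl(\phi(\hat f)(g)\bigr)$, where $\hat f$ is a PBW lift of $f$ and $\epsilon$ is evaluation at $X=t$; its $\H_{q,t}$-balancedness is a formal consequence of $\phi$ being an anti-automorphism, and $\{1,1\}=1$ gives the required non-vanishing. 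With that substitution the argument is correct and coincides with the computation the paper delegates to \cite[Sec.~1.4.2]{Che05}.
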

\begin{proof}
 This follows from the PBW property combined with some computations (see \cite[Sec. 1.4.2]{Che05}).
\end{proof}

\begin{corollary}\label{cor_pairings}
 There are pairings
 \[
  \langle -,-\rangle_{q,t}: \phi(P^+) \otimes_{\H_{q,t}} P^+ \to \C,\quad,\quad
  \langle -,-\rangle_{q,t}: \phi(P^+\e) \otimes_{\SH_{q,t}} \e P^+ \to \C
 \]
 and these pairings are both uniquely determined by the condition $\langle 1,1\rangle = 1$.
\end{corollary}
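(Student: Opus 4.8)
The plan is to deduce both pairings from Lemma \ref{lemma_prepairing} together with a compatibility check between the Morita equivalence (\ref{mor}) and the anti-automorphism $\phi$. First I would observe that since $\phi(T) = T$, the anti-automorphism $\phi$ fixes the idempotent $\e = (T+t^{-1})/(t+t^{-1})$, so it restricts to an anti-automorphism of $\SH_{q,t} = \e\H_{q,t}\e$. Consequently, twisting by $\phi$ takes the $\SH_{q,t}$-module $\e P^+$ to the $\SH_{q,t}$-module $\phi(\e P^+) = \phi(P^+)\e$ (the equality of the underlying spaces $\e P^+$ and $P^+\e$ being the usual identification $\e\c[X^{\pm1}]\cong\c[X^{\pm1}]\e$ under Morita). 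So the statement to prove is that each of $\phi(P^+)\otimes_{\H_{q,t}}P^+$ and $\phi(P^+\e)\otimes_{\SH_{q,t}}\e P^+$ is one-dimensional; the first is exactly Lemma \ref{lemma_prepairing}, and the pairings are then defined as the unique bilinear maps realizing the quotient onto $\C$, normalized so that the class of $1\otimes 1$ maps to $1$.

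The main step is the second pairing. Here I would invoke the Morita equivalence (\ref{mor}) between $\Mod\,\H_{q,t}$ and $\Mod\,\A$, which in the relevant parameter ranges sends $M \mapsto \e M$ and has quasi-inverse $N \mapsto \H_{q,t}\e\otimes_\A N$. A standard fact about Morita equivalences induced by an idempotent is the projection-formula isomorphism: for a right module $M'$ and a left module $M$ over $\H_{q,t}$ with $M' = \phi(P^+)$ and $M = P^+$, one has a natural isomorphism
\[
\phi(P^+)\otimes_{\H_{q,t}} P^+ \;\cong\; (\phi(P^+)\e)\otimes_{\e\H_{q,t}\e}(\e P^+),
\]
because $\e\H_{q,t}\otimes_{\H_{q,t}}P^+ \cong \e P^+$ canonically (and symmetrically on the left factor, using $\phi(\e) = \e$ to identify $\phi(P^+)\e$ with $\e$ acting on the $\phi$-twisted module). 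Granting this isomorphism, the one-dimensionality of the left side from Lemma \ref{lemma_prepairing} immediately yields the one-dimensionality of the right side, i.e. of $\phi(P^+\e)\otimes_{\SH_{q,t}}\e P^+$. The normalization $\langle 1,1\rangle = 1$ pins down the pairing uniquely in both cases since a nonzero linear functional on a one-dimensional space is determined by its value on any nonzero vector, and the image of $1\otimes 1$ (resp.\ $\e\otimes\e$) is nonzero because, tracing through the computations of \cite[Sec.\ 1.4.2]{Che05}, the class of the highest-weight-type vector generates the quotient.

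I expect the main obstacle to be bookkeeping rather than conceptual: one must be careful that the identifications $\e P^+ \cong \c[X+X^{-1}]$ from (\ref{sphr}) and $\phi(P^+)\e$ are compatible in the sense that the element called "$1$" on each side (the constant symmetric Laurent polynomial) corresponds under the tensor-product isomorphism above to the element called "$1$" in $\phi(P^+)\otimes_{\H_{q,t}}P^+$, so that the normalization condition $\langle 1,1\rangle=1$ is internally consistent between the two pairings. A secondary point requiring care is that Lemma \ref{lemma_prepairing} and the Morita equivalence both have hypotheses on $(q,t)$ (e.g.\ $q^n\neq 1$, $t\neq\pm i$), but the symmetric representation $\e P^+$ is defined for \emph{all} $q,t\in\C^*$ by (\ref{sphr}); I would handle the general case by a standard Zariski-density / flatness argument, defining the pairing generically and extending it, or simply note that the statement as used later only requires the generic range. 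Alternatively, and perhaps more cleanly, one can avoid Morita altogether and prove the second one-dimensionality directly: using the PBW basis $\{X^nT^\varepsilon Y^m\}$ and the identification $\e P^+ = \H_{q,t}/(\H_{q,t}(T-t)+\H_{q,t}(Y-t))$, show that $\phi(P^+)\e\otimes_{\SH_{q,t}}\e P^+$ is spanned by $\e\otimes\e$ by moving $Y$'s to the right and $X$'s (hence $Y^{-1}$'s after $\phi$) to the left, exactly mirroring the computation behind Lemma \ref{lemma_prepairing}.
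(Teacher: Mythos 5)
Your proposal is correct and follows essentially the same route as the paper: the first pairing is exactly Lemma \ref{lemma_prepairing}, and the second is obtained from the natural isomorphism $\phi(P^+\e)\otimes_{\SH_{q,t}}\e P^+\cong\phi(P^+)\otimes_{\H_{q,t}}P^+$ furnished by the Morita equivalence (the paper cites \cite[Lemma 5.2]{BS14} for precisely the ``projection-formula'' fact you invoke). Your remarks on the parameter restrictions for Morita equivalence and the alternative direct PBW argument are sensible refinements, but not needed for the argument as the paper uses it.
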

\begin{proof}
 The existence and uniqueness of the first pairing follows from Lemma \ref{lemma_prepairing}. Since $\SH_{q,t}$ and $\H_{q,t}$ are Morita equivalent, the natural map $\phi(P^+\e) \otimes_{\SH_{q,t}} \e P^+ \to \phi(P^+) \otimes_{\H_{q,t}} P^+$ is an isomorphism of vector spaces (see, e.g. \cite[Lemma 5.2]{BS14}). This shows the existence and uniqueness of the second pairing.
\end{proof}

\begin{remark}
 This pairing is very closely related to the topological pairing (\ref{knotpairing}). More precisely, if the construction in this section is repeated for the sign representation $P^-$, then this pairing at $t=1$ is \emph{exactly} the same as the pairing (\ref{knotpairing}) when $L$ is the unknot (see Lemma \ref{lemma_teq1ispairing}).
\end{remark}

\subsubsection{The sign representation}\label{sec_signrep}
In this subsection we recall some facts about the sign representation $P^-$ of $\HH_{q,t}$, which will be used in Section \ref{sec_topologicalunknot}. 

The symmetric sign representation $\e P^-$ can be identified with $\C[x]$ as follows. First, we define 
\[ \delta_t := tX^{-1}-t^{-1}X\] 
and note that in the sign representation we have $\e \delta_t = \delta_t$ and $\e\cdot 1 = 0$. Since 
$T$ commutes with elements of $\C[X+X^{-1}] \subset \HH_{q,t}$, the $-t^{-1}$ and $t$ eigenspaces of $T$ are
\[
 P^- = \C[X+X^{-1}] \oplus \C[X+X^{-1}]\delta_t
\]
Therefore, $\e$ kills the first factor in this decomposition, and we obtain
\begin{equation}\label{eq_symsigndecomp}
 \e P^- = \C[x]\delta_t \subset \C[X^{\pm 1}]
\end{equation}
where $x = X + X^{-1}$. As in the previous section, we write $P^-\e$ for the right $\HH_{q,t}$-module that is the twist of $\e P^-$ by the anti-automorphism $\phi$ from (\ref{eq_antiautomorphism}).

\begin{lemma}\label{signpairing}
 There is a well-defined pairing $\langle -,-\rangle: P^-\e \otimes_{\SH_{q,t}} \e P^- \to \C$, and this pairing is uniquely determined by the condition $\langle \delta_t, \delta_t\rangle = 1$.
\end{lemma}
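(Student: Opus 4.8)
The plan is to mimic exactly the proof of Corollary \ref{cor_pairings}, but now for the sign representation instead of the standard one. First I would establish the analogue of Lemma \ref{lemma_prepairing}, namely that the vector space $\phi(P^-) \otimes_{\H_{q,t}} P^-$ is $1$-dimensional. This follows from the PBW property of $\H_{q,t}$ in the same way as for $P^+$: writing $P^- = \C[X^{\pm 1}]$ with the sign action, one checks that $\phi(P^-)\otimes_{\H_{q,t}} P^-$ is spanned by $1 \otimes 1$ (using the PBW decomposition $\C[X^{\pm}]\otimes \C[\Z_2]\otimes \C[Y^{\pm}]$ and the fact that, in $P^-$, $T$ acts via $\sT_-$ and $Y$ via $\sy_-\s_-\sT_-$, so that any element can be moved across the tensor product and reduced to a scalar multiple of $1\otimes 1$). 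Concretely, $\phi$ sends the subalgebra generated by $X^{\pm 1},T$ to the one generated by $Y^{\pm 1}, T$, and acting on the left copy one can use the $Y$-action on $P^-$ and the $X$-action on the right copy to reduce everything to $1\otimes 1$; one then verifies this is nonzero by exhibiting an explicit functional. This gives the first (non-spherical) pairing $\langle -,-\rangle: \phi(P^-)\otimes_{\H_{q,t}} P^- \to \C$, uniquely normalized by $\langle 1,1\rangle = 1$.

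Next I would pass to the spherical picture via Morita equivalence. Since $\H_{q,t}$ and $\SH_{q,t}$ are Morita equivalent (for $t \ne \pm i$, which is implicit throughout Section \ref{sph}), the natural map $\phi(P^-\e)\otimes_{\SH_{q,t}} \e P^- \to \phi(P^-)\otimes_{\H_{q,t}} P^-$ is an isomorphism of vector spaces, by the same reasoning as in the proof of Corollary \ref{cor_pairings} (citing \cite[Lemma 5.2]{BS14}). Hence $\phi(P^-\e)\otimes_{\SH_{q,t}}\e P^-$ is also $1$-dimensional, so there is a unique pairing $P^-\e \otimes_{\SH_{q,t}} \e P^- \to \C$ once we fix a normalization. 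Finally I would identify the correct normalizing element: under the identification (\ref{eq_symsigndecomp}), $\e P^- = \C[x]\delta_t$, so the generator of $\e P^-$ is $\delta_t$ rather than $1$, and likewise $P^-\e$ is generated by $\delta_t$. Since $1 \otimes 1$ corresponds (up to a nonzero scalar) to $\delta_t \otimes \delta_t$ under the Morita isomorphism, the pairing is uniquely pinned down by the condition $\langle \delta_t,\delta_t\rangle = 1$. (One should double-check that $\delta_t\otimes\delta_t$ is indeed nonzero in $\phi(P^-\e)\otimes_{\SH_{q,t}}\e P^-$, equivalently that $\e$ applied to the generator of $\phi(P^-)\otimes_{\H_{q,t}}P^-$ is nonzero — but this is immediate since $\e\delta_t = \delta_t \ne 0$.)

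The main obstacle I anticipate is the bookkeeping in the $1$-dimensionality argument for $\phi(P^-)\otimes_{\H_{q,t}} P^-$: one has to be careful that the sign twists in $\s_-$ and $\sy_-$ (and the modified Demazure-Lusztig operator $\sT_-$) do not introduce a degeneration that would kill $1\otimes 1$ or make the space larger. However, since $P^-$ is the module induced from the $1$-dimensional $\H_Y$-module $\C_{-t^{-1}}$ (as recorded in the remark following the definition of $P^\pm$), one can instead argue abstractly: $\phi(P^-) = \H_{q,t}$-twist of $\mathrm{Ind}_{\H_Y}^{\H_{q,t}}\C_{-t^{-1}}$, and $\phi$ interchanges the ``$X$-side'' and ``$Y$-side'' Hecke subalgebras, so $\phi(P^-)\otimes_{\H_{q,t}} P^- \cong \C_{-t^{-1}}\otimes_{\H_X}\H_{q,t}\otimes_{\H_{q,t}}\H_{q,t}\otimes_{\H_Y}\C_{-t^{-1}}$, which by PBW collapses to a $1$-dimensional space. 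This is the cleanest route and avoids the explicit operator calculations entirely; the rest of the proof is then a routine transport along the Morita equivalence together with the identification of generators.
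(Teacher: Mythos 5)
Your proof is correct in outline, and the second half (Morita transfer as in Corollary \ref{cor_pairings}, identification of $\delta_t$ as the generator of $\e P^-$ via (\ref{eq_symsigndecomp})) coincides with what the paper does. Where you diverge is the key one-dimensionality statement for $\phi(P^-)\otimes_{\H_{q,t}}P^-$: you propose to re-run the PBW argument directly for the sign action, or more cleanly to use the induced-module description $P^-=\mathrm{Ind}_{\H_Y}^{\H_{q,t}}\C_{-t^{-1}}$ so that the tensor product collapses to $\C_{-t^{-1}}\otimes_{\H_X}\H_{q,t}\otimes_{\H_Y}\C_{-t^{-1}}$. The paper instead observes that the identity map on generators gives an isomorphism $\H_{q,t}\to\H_{q,-t^{-1}}$ under which the sign representation becomes the \emph{standard} polynomial representation of $\H_{q,-t^{-1}}$, so Lemma \ref{lemma_prepairing} applies verbatim with $t$ replaced by $-t^{-1}$ and no new computation is needed. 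The practical difference is the nonvanishing step: your induced-module argument only gives that the tensor product is \emph{at most} one-dimensional, and you defer the nonvanishing to ``an explicit functional,'' which is precisely the nontrivial input (the Dunkl--Cherednik evaluation) that the paper's parameter-swap trick imports for free from the known $P^+$ case. So your route is self-contained and arguably more transparent about where PBW is used, while the paper's is shorter and avoids redoing the sign-twisted bookkeeping; to make yours airtight you should write down the functional (evaluation at the appropriate point, the sign analogue of $f\mapsto f(t)$) and check it descends. Your uniqueness argument (from one-dimensionality) is also fine and is in fact slightly cleaner than the paper's, which deduces uniqueness separately from the eigenvector property of $\delta_t$ under the Macdonald operator.
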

\begin{proof}
 The uniqueness of the claimed pairing follows from the decomposition (\ref{eq_symsigndecomp}) and from the fact that $\delta_t$ is an eigenvector of $\e(Y+Y^{-1})\e$. To show existence, we first note that there is an isomorphism $f: \HH_{q,t} \to \HH_{q,-t^{-1}}$ determined by 
 \[
  f(X) = X,\quad f(Y) = Y,\quad f(T) = T
 \]
 The twist $f^*(P^+)$ of the standard polynomial representation by this isomorphism is the sign representation, and combining this with Lemma \ref{lemma_prepairing} shows that $\phi(P^-)\otimes_{\H_{q,t}} P^-$ is one-dimensional. Then the claim follows using the same argument as in Corollary \ref{cor_pairings}.
\end{proof}

We will also need the analogues of the Macdonald polynomials for the sign representation.
\begin{definition}\label{def_signmacpolys}
 We define the \emph{sign Macdonald polynomials} by $p_n^-(x) = p_n^-(x;q,t) := p_n(x;q,-t^{-1})$.
\end{definition}
\begin{remark}\label{rmk_signmacpolys}
By the proof of Lemma \ref{signpairing}, the polynomials $p_n^-(x)\delta_t \in \e P^-$ are eigenvectors for the Macdonald operator $Y+Y^{-1}$.
\end{remark}

\section{A modified Kauffman bracket skein module} \label{sec_topologicaldeformations}

In this section we discuss deformations of skein modules to modules over the DAHA $\H_{q,t}$ of type $A_1$. We use a modification of the Kauffman bracket skein module to give a topological construction of the spherical subalgebra $\SH_{q,t}$. We also give a construction which associates an $\SH_{q,t} $-module to each knot in $S^3$. 

\subsection{Modified KBSM for surfaces}\label{mkbsm}
Our goal in this section is to define a 2-parameter skein algebra $K_{q,t}(F)$ for a (connected) surface $F$. As we describe below, the algebra $K_{q,t}(F)$ will be a quotient of the skein module $K_q(F \setminus \{p\})$ of the punctured surface by an ideal depending on the parameter $t \in \C^*$. For general surfaces there is a surjection $K_{q,t=1}(F) \twoheadrightarrow K_q(F)$, but the specialization $K_{q,t=1}(F)$ is `bigger' than $K_q(F)$. More precisely, the set of links with non-crossing, nontrivial components forms a (linear) basis of $K_q(F)$ but does not span $K_{q,t=1}(F)$. However, if $F$ is the torus $T^2$, then this set \emph{is} a linear basis for $K_{q,t}(T^2)$ (for all $t$). Therefore, $K_{q,t}(T^2)$ can be viewed as a (flat) deformation of $K_q(T^2)$. We will show that this deformation is the same as the algebraic deformation given by the spherical subalgebra $\SH_{q,t}$ described previously.

\begin{definition}
We fix a point $p \in F$ and define the following: 
\begin{enumerate}
\item a \textbf{vertical special strand} is a homeomorphism $[0,1] \to \{p\} \times [0,1] \subset F \times [0,1]$,
\item a \textbf{framed link with a vertical special strand} is an isotopy class of embeddings $[0,1] \sqcup \left(\sqcup_n S^1\times[0,1]\right)\hookrightarrow F \times [0,1]$, where $n \geq 0$ and the isotopy class contains a representative whose embedding of $[0,1]$ is a vertical special strand. (The allowed isotopies must fix the boundary of the 3-manifold, and in particular the endpoints of the special strand are fixed.)
\end{enumerate}
\end{definition}
Let $\mathcal L_{q,t}$ be the vector space spanned by framed links with a vertical special strand. (We remark that $\mathcal L_{q,t}$ is isomorphic to the space spanned by framed links in a thickening of the punctured surface. However, we phrase the definition using `special strands' because we will later want to extend this construction to 3-manifolds that are not thickened surfaces, and it seems less convenient to use the `puncture' definition in this situation.)

Let $\mathcal L'_{q,t}\subset \C \mathcal L_{q,t}$ be the subspace generated by elements of the form
\begin{equation}\label{modifiedkbsm_rel}
L_+-qL_0-q^{-1}L_\infty, \quad (L \sqcup \bigcirc) + (q^2  + q^{-2})L, \textrm{  and  } L_s+(q^2t^{-2}+q^{-2}t^2)L'
\end{equation}
where $L_+,L_0,L_\infty$ are links that are identical outside of a ball, and inside the ball appear as the first, second, and third terms of Figure \ref{kbsm} (respectively). Similarly, $L_s,L'$ are links which are identical outside a ball, and inside a ball appear as in Figure \ref{kbsm_newrel}. (The dotted lines in Figure \ref{kbsm_newrel} represent the special strand, and the solid loop in $L_s$ is a component of a framed link in the skein module.) We note that the third relation in \eqref{modifiedkbsm_rel} doesn't require the special strand to be framed, since the relation doesn't use the framing of the special strand in any way.

\begin{definition}
The \emph{modified Kauffman bracket skein module} $K_{q,t}(F)$ is the algebra $\C \mathcal L_{q,t} / \mathcal L'_{q,t}$.
\end{definition}

\begin{remark}\label{rmk:fillin} In words, the first two relations in \eqref{modifiedkbsm_rel} say that the standard skein relations apply between links in $F \times [0,1]$.  The third relation (between $L_s$ and $L$) is saying ``a loop around the special strand can be removed at the cost of a constant.'' In other words, $K_{q,t}(F)$ is isomorphic to $K_{q}(F \setminus p)$ modulo the relation that says: if $L_p$ is a horizontally-framed loop encircling the puncture $p$, then $L_p = -q^2t^{-2} - q^{-2}t^2$.
\end{remark} 


\begin{figure}
\begin{center}
\input{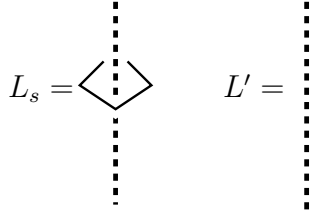}
\caption{Skein relation for special strands}\label{kbsm_newrel}
\end{center}
\end{figure}

The algebra structure is given by ``stacking links" as before. More precisely,  given two links $L_1,L_2 \subset F \times [0,1]$, we take two copies of $F \times [0,1]$, each containing one of the $L_i$, and glue $F \times \{1\}$ in one to $F\times \{0\}$ in the other via the identity map. Since each special strand begins and ends at $p$, the special strands glue together. The identity element of this algebra is the link with one special strand and no other components. We also remark that the identity component of the diffeomorphism group of a surface acts transitively, so different choices of $p$ give isomorphic algebras. We therefore do not include the point $p \in F$ in the notation.

\begin{lemma}
There is a natural surjective algebra map $K_{q,t=1}(F) \twoheadrightarrow K_{q}(F)$.
\end{lemma}
\begin{proof}
First, there is a natural map $f: K_q(F \setminus p) \to K_q(F)$. Let $L_p \in K_q(F\setminus p)$ be the loop which encircles the puncture $p$. Under this map, $L_p$ is sent to a nullisotopic loop, which is equal to $-q^2-q^{-2}$ by the (standard) skein relations. Second, if $t=1$, then $L_p = -q^2t^{-2}-q^{-2}t^2 = -q^2-q^{-2}$ in $K_{q,t=1}(F)$. Third, by Remark \ref{rmk:fillin}, $K_{q,t}(F)$ is the  quotient of $K_q(F\setminus p)$ by the relation $L_p = -q^2t^{-2}-q^{-2}t^2$. Combining these three statements shows that if $t=1$, then the map $f: K_q(F \setminus p) \to K_q(F)$ factors through to the quotient, so induces a map $\tilde f: K_{q,t=1}(F) \to K_q(F)$. The map $\tilde f$ is surjective because $f$ is.
\end{proof}

\subsection{The torus}
We now compute the algebra structure of $K_{q,t}(T^2)$. We first recall a useful result from \cite{BP00}. Let $T'$ be the torus with one puncture, and let $x'$ and $y'$ be simple closed curves that intersect once, and let $z'$ be the (simple closed) curve with the coefficient $q$ in the resolution of the product $x'y'$. (See Figure \ref{genpunctorus}.)

\begin{figure}
\begin{center}
\input{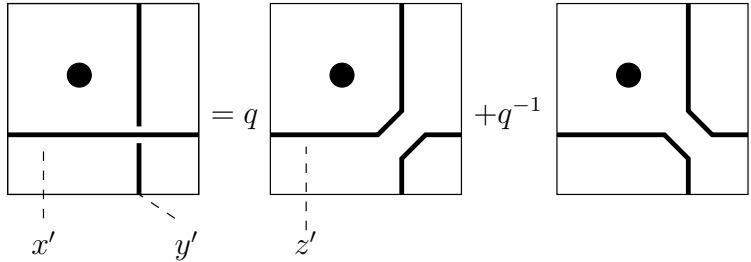}
\caption{The generators for the punctured torus}\label{genpunctorus}
\end{center}
\end{figure}

We also need the algebras $B'_q$ and $B_{q,t}$ from Section \ref{sph}. We recall
that $B'_q$ is the algebra generated by $x,y,z$ modulo the following relations:
\begin{equation}\label{relationsforB'_top}
[x,y]_q = (q^2-q^{-2})z,\quad
[z,x]_q = (q^2-q^{-2})y,\quad
[y,z]_q = (q^2-q^{-2})x
\end{equation}
and that $B_{q,t}$ is the quotient of $B'_q$ by the additional relation
\begin{equation}\label{casimir_rel_top}
q^2x^2 + q^{-2}y^2+ q^2z^2 -qxyz= \left( \frac t q - \frac q t\right)^2 + \left( q + \frac 1 q\right)^2
\end{equation}

\begin{theorem}\textup{\cite[Thm 2.1]{BP00}}\label{thm_bp}
There is an algebra isomorphism $B'_q \to K_q(T')$ induced by the assignments $x \mapsto x'$, $y \mapsto y'$, and $z \mapsto z'$.
\end{theorem}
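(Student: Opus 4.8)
The plan is to construct the homomorphism $B'_q\to K_q(T')$ and verify in three stages that it is a well-defined algebra isomorphism: check that the curves $x',y',z'$ satisfy the defining relations \eqref{relationsforB'_top} of $B'_q$; check that the resulting map is surjective; and check that it is injective by matching explicit bases on the two sides. The heuristic behind expecting this to work is the classical point: at $q=1$ the relations \eqref{relationsforB'_top} degenerate and $B'_q$ becomes the polynomial ring $\C[x,y,z]$, which agrees with the Fricke--Vogt description of $K_1(T')$ as the $\SL_2$ character ring of $\pi_1(T')=F_2$ (polynomial on the three traces $x',y',z'$); so at $q=1$ the map is an isomorphism, and for general $q$ one is checking that the two natural deformations coincide.

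\textbf{Well-definedness.} The curves $x'$ and $y'$ meet once, so $x'y'$ has a single crossing, and resolving it with the Kauffman bracket relation of Figure~\ref{kbsm} gives, by the very definition of $z'$, the identity $qx'y'-q^{-1}y'x'=(q^2-q^{-2})z'$ (the two smoothings are the two slope curves obtained from $x'$ and $y'$, and the opposite product $y'x'$ resolves with the weights $q,q^{-1}$ interchanged); this is the first relation in \eqref{relationsforB'_top}. For the other two I would use the order-three element $\rho$ of the mapping class group of $T'$ induced by $\left(\begin{smallmatrix}0&-1\\1&-1\end{smallmatrix}\right)\in\SL_2(\Z)$, which permutes the three slopes cyclically: it gives an algebra automorphism of $K_q(T')$ with $\rho(x')=y'$, $\rho(y')=z'$, and hence (since $\rho^3=\mathrm{id}$) $\rho(z')=x'$, so applying $\rho$ and $\rho^2$ to the first relation yields the remaining two. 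Thus $x\mapsto x'$, $y\mapsto y'$, $z\mapsto z'$ extends to an algebra map $F\colon B'_q\to K_q(T')$.

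\textbf{Surjectivity.} The algebra $K_q(T')$ is spanned by multicurves, and on the once-punctured torus every multicurve is a disjoint union of $a\ge 0$ parallel copies of the peripheral loop $\delta$ together with $b\ge 0$ parallel copies of a single slope curve $C_\sigma$, i.e.\ it equals the monomial $\delta^{a}(C_\sigma)^{b}$. It therefore suffices to show that every $C_\sigma$ and that $\delta$ lie in the subalgebra $A$ generated by $x',y',z'$. With $x'=C_\infty$, $y'=C_0$ one has $z'=C_1\in A$ from the first relation; if $\sigma_1,\sigma_2$ are Farey neighbours with $C_{\sigma_1},C_{\sigma_2}\in A$, then these two curves meet once and resolving the single crossing of their product writes the mediant curve in terms of $C_{\sigma_1}C_{\sigma_2}$ and of a slope curve lying strictly closer to the root of the Farey tree, so an induction over the (connected) Farey graph puts every $C_\sigma$ in $A$. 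Finally a direct skein computation identifies the central element $q^2(x')^2+q^{-2}(y')^2+q^2(z')^2-qx'y'z'$ of \eqref{casimir_rel_top} with $\delta$ up to a scalar and an additive constant, so $\delta\in A$ as well; hence $F$ is onto. (This is also what makes passing to $B_{q,t}$ by imposing \eqref{casimir_rel} the algebraic shadow of fixing the colour of the puncture, which underlies the later identification of $K_{q,t}(T^2)$ with $\SH_{q,t}$.)

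\textbf{Injectivity.} This is the hard part --- showing \eqref{relationsforB'_top} are \emph{all} the relations. On the algebraic side I would run Bergman's diamond lemma on \eqref{relationsforB'_top}: the relations rewrite any word into the form $x^{a}z^{b}y^{c}$, there is essentially a single overlap ambiguity (the word $zyx$), and one checks it resolves, so $\{x^{a}z^{b}y^{c}:a,b,c\ge 0\}$ is a $\C[q^{\pm1}]$-basis of $B'_q$. It then remains to show their images $(x')^{a}(z')^{b}(y')^{c}$ are linearly independent in $K_q(T')$. For this I would filter $K_q(T')$ by geometric complexity of multicurves; in the associated graded the $q$-commutators of \eqref{relationsforB'_top} become honest commutators (the right-hand sides have strictly lower complexity), so $\mathrm{gr}\,K_q(T')$ is commutative, and the leading term of $(x')^{a}(z')^{b}(y')^{c}$ is a nonzero $\C[q^{\pm1}]$-multiple of a single multicurve $M(a,b,c)$ with $(a,b,c)\mapsto M(a,b,c)$ injective into the multicurve basis; linear independence follows. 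Combined with surjectivity, $F$ carries a basis of $B'_q$ to a linearly independent spanning set, so it is an isomorphism. (Alternatively, once freeness of both sides over $\C[q^{\pm1}]$ is in place, injectivity can be deduced from surjectivity together with the $q=1$ computation.) I expect this last linear-independence estimate --- controlling the leading multicurve of an arbitrary product $(x')^{a}(z')^{b}(y')^{c}$ --- to be the main obstacle; everything else is bookkeeping with the skein relations and the Farey structure of slopes on the torus.
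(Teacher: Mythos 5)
The paper does not prove this statement: it is quoted verbatim from Bullock--Przytycki \cite[Thm 2.1]{BP00}, so there is no in-paper argument to compare against. Your three-stage plan (relations hold, surjectivity via the Farey graph, injectivity via a PBW basis and leading terms) is exactly the standard route, and the first two stages are essentially complete: the single-crossing resolution does give $[x',y']_q=(q^2-q^{-2})z'$, the order-three mapping class propagates this to the other two relations, the decomposition of multicurves on $T'$ as $\delta^a C_\sigma^b$ is correct, and your identification of $\delta$ with $-(q^2(x')^2+q^{-2}(y')^2+q^2(z')^2-qx'y'z')+q^2+q^{-2}$ is the same computation the paper itself invokes in the proof of Corollary \ref{casimir_central_cor}.

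The gap is in the injectivity step, which is where all the content of the theorem lives. A small slip first: with normal form $x^az^by^c$ the reducible words are $yx$, $zx$, $yz$, so the unique overlap ambiguity is $yzx$, not $zyx$. More seriously, the leading-term claim is not correct as stated. The product $x'y'$ resolves as $qz'+q^{-1}\bar z'$ where $\bar z'$ is the $(1,-1)$-curve; both terms are single simple closed curves, so for any filtration measured by intersection with $x'\cup y'$ they have equal complexity and there is no unique leading multicurve, while for the symmetric complexity $i(\cdot,x')+i(\cdot,y')+i(\cdot,z')$ the leading term of $x'y'$ is $\bar z'$ rather than $z'$, and the associated graded is only $q$-commutative (one gets $\bar y'\,\bar x'=q^{2}\,\bar x'\,\bar y'$ in $\mathrm{gr}$), not commutative. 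So you must (i) choose the filtration carefully, (ii) prove that the top-degree part of the full resolution of $(x')^{a}(z')^{b}(y')^{c}$ is a unit times a \emph{single} multicurve $M(a,b,c)$, and (iii) prove $(a,b,c)\mapsto M(a,b,c)$ is injective; none of this is automatic, and as written it is an acknowledged but unproven claim rather than a proof. Your fallback via the $q=1$ specialization also needs care: the paper works at a fixed $q\in\C^{*}$, so a Nakayama-type argument over $\C[q^{\pm1}]$ does not immediately give injectivity at a given numerical value of $q$.
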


\begin{remark}\label{boundaryparallelcentral}
It is a general fact that if $F$ is any surface and $\partial$ is a curve parallel to a boundary component of $F$, then $\partial$ is a central element in $K_q(F)$. This is true because if $\partial$ is `on top of' a link $L$, then $\partial$ can be shrunk to be very close to the boundary, slid down the boundary until it is below $L$, and then expanded. In other words, the link $\partial L$ is isotopic to the link $L\partial$.
\end{remark}

\begin{corollary}\label{casimir_central_cor}
The element $w = q^2x^2 + q^{-2}y^2+ q^2z^2-qxyz$ is central in $B'_q$.
\end{corollary}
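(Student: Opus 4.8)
The plan is to bypass the algebra relations (\ref{relationsforB'_top}) entirely and argue topologically. By Theorem \ref{thm_bp}, the assignment $x\mapsto x'$, $y\mapsto y'$, $z\mapsto z'$ gives an isomorphism $B'_q \xrightarrow{\ \sim\ } K_q(T')$, so it suffices to prove that the image $q^2(x')^2 + q^{-2}(y')^2 + q^2(z')^2 - q\,x'y'z'$ is central in $K_q(T')$. The key point is that the once-punctured torus $T'$ has a single boundary component, and a curve $\partial$ running parallel to that boundary is central in $K_q(T')$ by Remark \ref{boundaryparallelcentral}.

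So the first (and main) step is to establish the identity
\begin{equation*}
q^2(x')^2 + q^{-2}(y')^2 + q^2(z')^2 - q\,x'y'z' \;=\; a\,\partial + c\,\varnothing
\end{equation*}
in $K_q(T')$, for some scalars $a,c \in \C[q^{\pm1}]$ (the precise values are irrelevant for what we want). I would prove this by a direct skein computation: resolve the product $x'y'$ of the two intersecting curves in Figure \ref{genpunctorus} using the Kauffman relations, then multiply by $z'$ and resolve again, comparing with the products $(x')^2$, $(y')^2$, $(z')^2$ expanded in the standard multicurve spanning set of $K_q(T')$. The curve that bounds the removed disk is exactly the multicurve $\partial$, and one checks that in the stated combination all other multicurves cancel, leaving only $\partial$ and the empty link $\varnothing$. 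This computation is finite and standard, and in fact is carried out (in their normalization) in \cite{BP00} en route to Theorem \ref{thm_bp}, so one may simply cite it.

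Granting this identity, the conclusion is immediate: $\partial$ is central in $K_q(T')$ by Remark \ref{boundaryparallelcentral}, and $\varnothing$ is the unit of $K_q(T')$ and hence also central, so $a\,\partial + c\,\varnothing$ is central. Transporting back along the isomorphism of Theorem \ref{thm_bp} shows that $w = q^2x^2 + q^{-2}y^2 + q^2z^2 - qxyz$ is central in $B'_q$, which is exactly the claim (and this is the fact invoked in Remark \ref{rmkcascentral}, justifying that $B_{q,t}$ is a central quotient of $B'_q$).

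The only real obstacle is the bookkeeping in the skein resolution identifying the image of $w$ with $a\,\partial + c\,\varnothing$ — keeping track of the powers of $q$ coming from crossing resolutions and of which resolution of $x'y'$ is labelled $z'$ (cf.\ the $q$-coefficient convention in Figure \ref{genpunctorus}). An alternative, purely algebraic route would verify $[w,x] = [w,y] = [w,z] = 0$ directly from (\ref{relationsforB'_top}) by repeated application of the $q$-commutators; this works but is computationally heavier and less illuminating than the topological argument, so I would only fall back on it if a self-contained proof avoiding \cite{BP00} were desired.
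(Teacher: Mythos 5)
Your proposal is correct and is essentially the paper's own argument: the paper likewise observes (via a short skein computation) that $-w+q^2+q^{-2}$ equals the boundary-parallel loop in $T'$ and then invokes Remark \ref{boundaryparallelcentral}. Your version leaves the scalars $a,c$ unspecified, which is harmless since centrality only needs $w$ to lie in the span of $\partial$ and the unit.
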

\begin{proof}
If we show that the element $-w+q^2+q^{-2}$ is the loop parallel to the boundary of $T'$, then the considerations in the previous remark apply to prove the claim. To see this identity we use \cite[Eq. (2)]{BP00}. Write $\delta$ for the loop around the puncture and $\alpha$ for the curve with coefficient $q^{-1}$ in Figure \ref{genpunctorus}. Then in our notation their identity is
\[
\alpha z = q^2 x^2 + q^{-2} y^2 - q^2 - q^{-2} + \partial
\]
Then the identity $xy = qz + q^{-1} \alpha$ of Figure \ref{genpunctorus} shows that 
\[
qxyz = q^2z^2 + (q^2 x^2 + q^{-2} y^2 - q^2 - q^{-2} + \partial)
\]
This shows that $-w+q^2+q^{-2} = \partial$, as desired.
\end{proof}

It is clear that there is a surjection $K_q(T') \twoheadrightarrow K_{q,t}(T^2)$ induced topologically by `filling in the puncture with the special strand.' This means that we can consider the loops $x',y',z'$ as elements of $K_{q,t}(T^2)$.

\begin{corollary}\label{corbqdkbsm}
There is an algebra isomorphism $B_{q,t} \to K_{q,t}(T^2)$ induced by the assignments $x \mapsto x'$, $y \mapsto y'$, and $z \mapsto z'$.
\end{corollary}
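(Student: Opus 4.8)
The plan is to deduce this from Theorem \ref{thm_bp} together with the computation of the extra relation that the special-strand skein relation imposes. By Theorem \ref{thm_bp} we already know $B'_q \xrightarrow{\sim} K_q(T')$ via $x\mapsto x'$, $y\mapsto y'$, $z\mapsto z'$, and we observed there is a surjection $K_q(T')\twoheadrightarrow K_{q,t}(T^2)$ obtained by filling in the puncture with the special strand. So $K_{q,t}(T^2)$ is a quotient of $B'_q$, and the only question is to identify the kernel of the composite $B'_q\to K_q(T')\to K_{q,t}(T^2)$.

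First I would show that the extra relation $L_s+(q^2t^{-2}+q^{-2}t^2)L'$ in (\ref{modifiedkbsm_rel}), when applied to the boundary-parallel loop of $T'$, forces exactly the Casimir relation (\ref{casimir_rel_top}) in the quotient. Concretely: by Corollary \ref{casimir_central_cor}, the element $-w+q^2+q^{-2}$ (where $w = q^2x^2+q^{-2}y^2+q^2z^2-qxyz$) equals the loop $\partial$ parallel to the boundary of $T'$. Under the map to $K_{q,t}(T^2)$, this boundary loop becomes a small loop encircling the special strand, i.e. precisely the configuration $L_s$ of Figure \ref{kbsm_newrel} (with $L'$ the diagram with no loop, which is the identity element). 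The special-strand relation then says this loop equals $-(q^2t^{-2}+q^{-2}t^2)$ times the identity. Equating gives $-w+q^2+q^{-2} = -(q^2t^{-2}+q^{-2}t^2)$, i.e. $w = q^2+q^{-2}+q^2t^{-2}+q^{-2}t^2$. A short check shows this equals $\left(\tfrac tq-\tfrac qt\right)^2+\left(q+\tfrac1q\right)^2$, which is exactly relation (\ref{casimir_rel_top}). Hence the surjection $B'_q\to K_{q,t}(T^2)$ factors through $B_{q,t}$, giving a surjection $B_{q,t}\twoheadrightarrow K_{q,t}(T^2)$.

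It remains to prove injectivity, i.e. that no further relations hold. For this I would use a dimension/basis count: as remarked in Section \ref{mkbsm}, when $F=T^2$ the links with non-crossing nontrivial components form a linear basis of $K_{q,t}(T^2)$ for all $t$ — equivalently, $K_{q,t}(T^2)$ is a flat deformation of $K_q(T^2)$ with the same underlying graded vector space, so $\dim$ of each graded piece of $K_{q,t}(T^2)$ agrees with that of $K_q(T^2)$. On the other side, $B_{q,t}$ is the quotient of $B'_q\cong K_q(T')$ by one central element of the appropriate degree, and one checks (e.g. via the PBW-type basis for $B'_q$, or by invoking Theorem \ref{sphericalrelations} identifying $B_{q,t}\cong\SH_{q,t}$ whose graded dimensions are known from the PBW property of $\H_{q,t}$) that the graded dimensions of $B_{q,t}$ match those of $K_q(T^2)$. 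Since we have a graded surjection $B_{q,t}\twoheadrightarrow K_{q,t}(T^2)$ between spaces of equal (finite) graded dimension, it is an isomorphism.

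The main obstacle I anticipate is the flatness/basis claim for $K_{q,t}(T^2)$ — i.e. establishing that the special-strand relations do not collapse the space below the expected dimension, so that the graded pieces of $K_{q,t}(T^2)$ genuinely have the same dimensions as those of $K_q(T^2)$. (This is presumably handled by a direct diagrammatic argument reducing any link-with-special-strand to a standard form, analogous to the classical argument that non-crossing curves span $K_q(T^2)$, with the new relation used only to remove loops encircling the special strand; the point is that such loops can always be isotoped to the standard position appearing in Figure \ref{kbsm_newrel}.) Once that upper bound on $\dim K_{q,t}(T^2)$ is in hand, the rest is the bookkeeping above. An alternative that avoids re-deriving flatness would be to produce an explicit inverse map $K_{q,t}(T^2)\to B_{q,t}$ by checking the skein relations are respected — but verifying the special-strand relation is respected is essentially the same computation as the Casimir identity above, so the quotient-plus-dimension-count route is cleaner.
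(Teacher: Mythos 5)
Your first step --- deriving the Casimir relation by applying the special-strand skein relation to the boundary-parallel loop of $T'$ --- is exactly the paper's argument and correctly produces the surjection $B_{q,t}\twoheadrightarrow K_{q,t}(T^2)$. The gap is in your injectivity argument. To run the dimension count you need a \emph{lower} bound on the (filtered) dimensions of $K_{q,t}(T^2)$, i.e.\ that the special-strand relation does not collapse the space below the size of $B_{q,t}$; but the diagrammatic reduction you sketch (rewriting every link with special strand into a standard spanning form) only yields an \emph{upper} bound, which you already have for free from the surjection. Worse, the ``flatness''/basis statement you quote from Section \ref{mkbsm} is introductory commentary whose justification is precisely Corollary \ref{corbqdkbsm} together with the known PBW basis of $\SH_{q,t}$, so invoking it here is circular. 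As written, the linear-independence half of your argument is unsupported.

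The paper closes this gap without any dimension count by identifying the kernel of $K_q(T')\to K_{q,t}(T^2)$ directly: whenever the relation $L_s+(q^2t^{-2}+q^{-2}t^2)L'$ is applied inside $T'\times[0,1]$ (with the special strand the vertical line over the puncture), the small loop $L_s$ encircling it can be slid to the top of the thickened surface, so every element of the kernel factors as $a'\,(\delta+q^2t^{-2}+q^{-2}t^2)$ with $\delta$ the loop around the puncture. Since $\delta=-w+q^2+q^{-2}$ is central, this kernel corresponds under Theorem \ref{thm_bp} exactly to the ideal of $B'_q$ defining $B_{q,t}$, and injectivity is immediate. If you wish to keep your route, you must supply the missing lower bound by independent means --- for instance by constructing a surjection from $K_{q,t}(T^2)$ onto an algebra of known size such as $\SH_{q,t}$ --- but any such construction amounts to the kernel identification above, so the paper's ``slide the relation to the top'' argument is the efficient path.
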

\begin{proof}
First we check that the composition $f:B'_q \to K_q(T') \to K_{q,t}(T^2)$ factors through the quotient $B'_q \to B_{q,t}$. To do this we need to check that relation (\ref{casimir_rel}) holds, i.e. that $f(w) = \left( \frac t q - \frac q t\right)^2 + \left( q + \frac 1 q\right)^2$. If $\partial \in K_q(T')$ is the loop around the puncture in $T'$, then $\partial = -w + q^2 + q^{-2} \in K_q(T')$ (see the proof of Corollary \ref{casimir_central_cor}). Then the third relation in (\ref{modifiedkbsm_rel}) implies $f(\partial) = -q^2t^{-2}-q^{-2}t^2$, and the calculation $\left(t/q-q/t\right)^2+(q+q^{-1})^2 = q^2+q^{-2}+q^2t^{-2}+q^{-2}t^2$ shows the claim.

To show that $f:B_{q,t} \to K_{q,t}(T^2)$ is injective, it suffices to show that the kernel of the map $K_q(T') \to K_{q,t}(T^2)$ is cyclic and is generated by $\delta + q^2t^{-2}+q^{-2}t^2$, where $\delta$ is the loop in $T'$ that encircles the puncture. This element is exactly the skein relation on the left of Figure \ref{kbsm_newrel}, and any time this relation appears in a sum of links, we can slide the relation to the top of the manifold $T'\times [0,1]$. In other words, any element $a$ in the kernel of $K_q(T') \to K_{q,t}(T^2)$ can be written in the form $a = a'(\delta + q^2t^{-2}+ q^{-2}t^2)$ for some $a' \in K_q(T')$. This proves $\delta + q^2t^{-2}+ q^{-2}t^2$ generates the kernel, as desired. 
\end{proof}

\begin{corollary}\label{cor_surfacedef}
The algebras $\SH_{q,t}$ and $K_{q,t}(T^2)$ are isomorphic.
\end{corollary}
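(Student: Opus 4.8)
The plan is to assemble the isomorphism $\SH_{q,t} \cong K_{q,t}(T^2)$ by composing the two presentations in terms of the common intermediate algebra $B_{q,t}$. Concretely, Theorem~\ref{sphericalrelations} (quoting \cite{Ter13}) gives an algebra isomorphism $f_1 \colon B_{q,t} \xrightarrow{\ \sim\ } \SH_{q,t}$ sending $x,y,z$ to the explicit elements $(X+X^{-1})\e$, $(Y+Y^{-1})\e$, and $q^{-1}(XYT^{-2}+X^{-1}Y^{-1})\e$, while Corollary~\ref{corbqdkbsm} gives an algebra isomorphism $f_2 \colon B_{q,t} \xrightarrow{\ \sim\ } K_{q,t}(T^2)$ sending $x,y,z$ to the curves $x',y',z'$ on the torus. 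The desired isomorphism is then simply $f_2 \circ f_1^{-1} \colon \SH_{q,t} \to K_{q,t}(T^2)$, and the proof is essentially a one-line invocation of these two results.

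The one point that deserves a sentence or two of discussion is that both $f_1$ and $f_2$ really do land in (and are isomorphisms onto) the correct target, which has already been established: $f_1$ is an isomorphism by Theorem~\ref{sphericalrelations}, and $f_2$ is an isomorphism by Corollary~\ref{corbqdkbsm}, whose proof in turn rests on Theorem~\ref{thm_bp} of \cite{BP00} (identifying $B'_q$ with the skein algebra of the once-punctured torus) together with the computation that the kernel of the surjection $K_q(T') \twoheadrightarrow K_{q,t}(T^2)$ is generated by $\delta + q^2t^{-2} + q^{-2}t^2$, which is precisely the relation cutting $B'_q$ down to $B_{q,t}$. So nothing further needs to be checked; one only has to observe that the two presentations share the \emph{same} algebra $B_{q,t}$ on the nose --- the defining relations \eqref{relationsforB'}, \eqref{casimir_rel} used in Section~\ref{sph} are literally the relations \eqref{relationsforB'_top}, \eqref{casimir_rel_top} reproduced in Section~\ref{sec_topologicaldeformations} --- so the composite makes sense.

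There is no real obstacle here: the theorem is a formal corollary, and all the substantive work (Terwilliger's presentation of $\SH_{q,t}$, the Bullock--Przytycki presentation of the punctured-torus skein algebra, and the kernel computation) has been done in the cited results and in Corollary~\ref{corbqdkbsm}. If anything, the only thing worth remarking is that this corollary is the conceptual payoff of the section: it shows that the $t$-deformation of $K_q(T^2)$ built topologically by filling in a puncture with a special strand coincides with the algebraic $t$-deformation given by the spherical DAHA, and under the identification the generators $x',y',z'$ correspond to the Frohman--Gelca generators (so that, e.g., $y'$ is the Macdonald operator $Y+Y^{-1}$, consistent with the remark after \eqref{mac} and with Theorem~\ref{fg00} in the $t=1$ case). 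I would therefore keep the proof to two or three sentences: define the map as $f_2 \circ f_1^{-1}$, cite Theorems~\ref{sphericalrelations} and Corollary~\ref{corbqdkbsm}, and note that it is an isomorphism as a composite of isomorphisms.
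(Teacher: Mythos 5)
Your proof is correct and is exactly the paper's argument: the paper's proof is the one-line composition of the isomorphism of Corollary~\ref{corbqdkbsm} with that of Theorem~\ref{sphericalrelations}, which is precisely your $f_2 \circ f_1^{-1}$. The additional remarks you make (that the two presentations use the same algebra $B_{q,t}$, and what the corollary means conceptually) are accurate but not needed for the proof itself.
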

\begin{proof}
Compose the isomorphism of Corollary \ref{corbqdkbsm} and Theorem \ref{sphericalrelations}.
\end{proof}

\subsection{The deformation for 3-manifolds}\label{sec_3mandefs}
We now define a deformed skein module $\bar K_{q,t}(M,f)$ for a 3-manifold $M$ with a boundary component $F$, a connected surface. The vector space $\bar K_{q,t}(M, f)$ is actually a bimodule - it is a left module over $K_{q,t}(F)$ and a right module over $K_{q,t}(T^2)$. The definition of $\bar K_{q,t}(M, f)$ depends on some additional data (the map $f$, which is described below), but if $M$ is a knot complement $S^3 \setminus K$, then we describe a canonical choice for this data.

For knot complements we also define a canonical quotient $K_{q,t}(S^3\setminus K)$ of $\bar K_{q,t}(S^3\setminus K)$. This quotient destroys the right module structure, so $K_{q,t}(S^3\setminus K)$ is just a left module over $K_{q,t}(T^2)$. When $K$ is the unknot this module has the `right size' and can be viewed as a deformation of $K_q(S^3 \setminus K)$. However, it is not clear if this is true for other knots.

\subsubsection{General 3-manifolds}
Let $G$ be the `tadpole' graph depicted in Figure \ref{fig:tadpole}, and let $f:G \to M$ be an embedding with $f(v_1) = p \in F \subset \partial M$. (Here $v_1$ is the vertex of $G$ that is not on the loop.) As before, the definition doesn't depend on the choice of $p \in F$, but it \emph{does} depend on the choice of $f$.
\begin{definition}\label{def:3manqt}
A \textbf{framed link compatible with $f$} is an isotopy class of embeddings $G \sqcup (\sqcup_n S^1\times[0,1])\hookrightarrow M$ containing a representative such that the embedding of $G$ is given by the map $f$. (The isotopies we consider are those which fix the boundary of $M$.) The \textbf{modified Kauffman bracket skein module} $\bar K_{q,t}(M,f)$ is the vector space of framed links compatible with $f$ modulo the relations in Equation (\ref{modifiedkbsm_rel}), where the third relation in \eqref{modifiedkbsm_rel} is only applied to the arc of the graph $G$, and not the loop of $G$. 
\end{definition}

\begin{remark}\label{rmk:3manrephrasing}
Definition \ref{def:3manqt} could be rephrased as follows. Let $M' := M \setminus (\textrm{nbhd of } f(G))$. Then $\bar K_{q,t}(M)$ is the quotient of the skein module $K_q(M')$ by the relation that says: if $L$ is a loop in $K_q(M')$ that is parallel to the loop on the boundary of $M'$ that encircles the arc in $G$, then $L = -q^2t^2 - q^{-2}t^{-2}$. This rephrasing shows that the definition of $\bar K_{q,t}(M,f)$ is invariant under isotopies of the map $f$.
\end{remark}

\begin{figure}
\begin{center}
\input{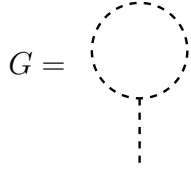}
\caption{The `tadpole' graph}\label{fig:tadpole}
\end{center}
\end{figure}

\begin{lemma}
The space $\bar K_{q,t}(M, f)$ is a left module over $K_{q,t}(\partial M)$.
\end{lemma}
\begin{proof}
To see this, we note that there is a natural inclusion of the punctured surface $\partial M \setminus p$ into $M$, which means that $\bar K_{q,t}(M,f)$ is a (left) module over $K_q( (\partial M) \setminus p)$. By Remark \ref{rmk:fillin}, $K_{q,t}(\partial M)$ is a quotient by $K_q((\partial M)\setminus p)$ by the relation $L_p = -q^2t^{-2}-q^{-2}t^2$. By definition, this relation holds in $\bar K_{q,t}(M,f)$, so the action of $K_q((\partial M)\setminus p)$ factors through to an action of $K_{q,t}(\partial M)$ on $\bar K_{q,t}(M,f)$.
\end{proof}

If we thicken the loop in the graph $G$, then the boundary $B$ of this thickened loop can be identified with the punctured torus $T^2 \setminus p$. Therefore, $\bar K_{q,t}(M, f)$ is also a right module over $K_{q,t}(T^2)$. However, in general this right module structure is not canonical - it depends on the identification of $T^2 \setminus p$ with the boundary $B$ (and also on the map $f$).


\subsubsection{Knot Complements}\label{sec_knotcomplementsmodule}
Let $K \subset S^3$ be a knot. In this section we describe a canonical choice of a map $f:G \to S^3 \setminus K$ (see Figure \ref{tadpole}). We also describe a quotient $K_{q,t}(K)$ of $\bar K_{q,t}(S^3\setminus K, f)$ that is closely related to the classical skein module $K_q(S^3\setminus K)$. In the next section we compute this module when $K$ is the unknot.

If $K$ is a knot in $S^3$, then there is a unique (up to isotopy) choice of a longitude and meridian in the boundary of a tubular neighborhood of $K$. There is therefore a unique (up to isotopy) choice of curve $C$ in $S^3\setminus K$ that is parallel to the meridian $m$. More precisely, $C$ is a boundary component of an annulus whose other boundary component is the meridian of $K$. We pick a simple arc $a$ in this annulus connecting $C$ to the point $p \in \partial (S^3\setminus K)$, and we define $f:G \to S^3\setminus K$ to be the map given by the union of the embeddings of $C$ and $a$. (See Figure \ref{tadpole}.) 

\begin{figure}
\begin{center}
\input{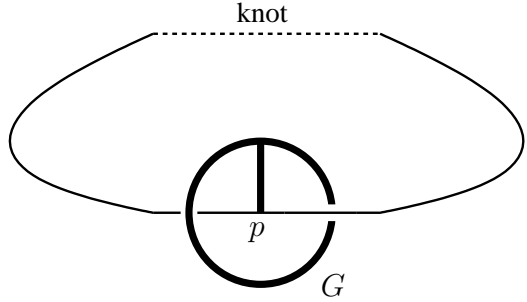}
\caption{The embedding of the graph $G$ for a knot complement}\label{tadpole}
\end{center}
\end{figure}

\begin{definition}
 If $K \subset S^3$ is a knot, then $\bar K_{q,t}(K) := \bar K_{q,t}(S^3\setminus K, f)$, where the map $f:G \to S^3\setminus K$ is the map described technically in the previous paragraph (and graphically in Figure \ref{tadpole}).
\end{definition}

\begin{remark} \label{rmk:arcchoices}
There are many choices of an arc in an annulus connecting the two boundary components, but the definition does not depend on this choice because all such choices are isotopic (via an isotopy that does not fix the embedding of $G$). Such isotopies are explicitly allowed in the definition of a `framed link compatible with $f$' (see the first sentence of Definition \ref{def:3manqt} and the last sentence of Remark \ref{rmk:3manrephrasing}).
\end{remark}

As was mentioned in the previous section, the vector space $\bar K_{q,t}(K)$ is a bimodule over $K_{q,t}(S^1 \times S^1)$. In general, the right module structure is non-canonical, but with our specific choice of $f:G \to S^3 \setminus K$, there is a canonical choice for the right module structure. 

In more detail, we let $N_G$ be a closed tubular neighborhood of the loop in the graph $G$, and we let $T_p$ be the boundary of $N_G$ (which is a punctured torus, where the puncture corresponds to the arc in the graph $G$). Up to isotopy there is a unique loop $m_G$ in $T_p$ that is contractible in $N_G$. By construction, there is a loop $l_G$ in $T$ that is isotopic to the meridian of $K$, and up to isotopy this loop is unique. (The loops $m_G$ and $l_G$ are the meridian and longitude of $G$ if $G$ is viewed as the unknot inside of $S^3$.) The choice of the (oriented) loops $m_G$ and $l_G$ gives an identification of $S^1 \times S^1$ with $T$, and up to isotopy this identification is uniquely determined by the requirement that the first and second factors of $S^1\times S^1$ are sent to $m_G$ and $l_G$, respectively.

We recall that for generic $q$, the algebra $\SH_{q,t}$ is generated by elements $x,y \in \SH_{q,t}$, with $x = (X+X^{-1})\e$ and $y = (Y+Y^{-1})\e$. Under the identifications in the previous paragraph, the elements $x$ and $y$ act on $\bar K_{q,t}(K)$ on the left via $m$ and $l$, respectively. The action of $x$ and $y$ on the right is given by $l_G$ and $m_G$, respectively. (Note that the meridian $m$ of $K$ is isotopic to the longitude $l_G$ of $G$, so the element $x \in \SH_{q,t}$ acts on the empty link $1_L \in \bar K_{q,t}(K)$ symmetrically on the left and on the right.) Let $z_G \in K_{q,t}(T^2)$ be the loop in the boundary of $G$ that has the coefficient $q$ in the resolution of the product $l_Gm_G$, so that $z \in \SH_{q,t}$ acts on $\bar K_{q,t}(K)$ on the right by $z_G$.

\begin{definition}\label{def_leftmodquotient}
 Let $\e P^-$ be the symmetric sign representation, and define the left $\SH_{q,t}$-module
 \[
  K_{q,t}(K) := \bar K_{q,t}(K) \otimes_{\SH_{q,t}} \e P^-
 \]
\end{definition}
The module $K_{q,t}(K)$ can equivalently be defined as the quotient of $\bar K_{q,t}(K)$ by the left submodule generated by $L(m_G + tq^{-2}+t^{-1}q^2)$ and $L(z_G + tq^{-3}l_G)$, for all $L \in \bar K_{q,t}(K)$. Topologically, this definition can be viewed as the $t$-analogue of `filling in the graph $G$'. More precisely, we have the following lemma.

\begin{proposition}\label{prop_teq1surj}
 If $t=1$, there is a natural surjection $K_{q,t=1}(K) \twoheadrightarrow K_q(K)$.
\end{proposition}
\begin{proof}
 There are natural surjections from $K_q(S^3 \setminus (K\cup G))$ onto $K_{q,t}(K)$ and $K_q(K)$. When $t=1$, all relations imposed in the definition of $K_{q,t=1}(K)$ are also in the kernel of the map $K_q(S^3\setminus (K \cup G)) \to K_q(K)$. (This uses the fact that the specialization $\e P^-_{q,t=1}$ is isomorphic to the skein module of the solid torus.) Therefore this map induces a map $K_{q,t=1}(K) \to K_q(K)$.
\end{proof}

\begin{remark}
  The relations defining $K_{q,t}(K)$ can be viewed as skein relations in some sense, but an important difference is that they are not local. For example, any loop isotopic to the $(1,1)$ curve in the torus $T_p$ that bounds the loop in $G$ is declared to be equal to a loop isotopic to $l_G$ (which is isotopic to the meridian of $K$), and this is not a local relation. This seems to make calculations involving $K_{q,t}(K)$ more difficult. 
\end{remark}

\subsection{The unknot}\label{sec_topologicalunknot}
In this section we compute the $\SH_{q,t}$-module $K_{q,t}(K)$, where $K \subset S^3$ is the unknot. We first compute the bimodule structure of $\bar K_{q,t}(K)$. The key observation is the following lemma.

\begin{lemma}\label{lemma:hopf}
 Let $U \subset S^3$ be an open $\epsilon$-neighborhood of the union of the unknot $K$ and the graph $G$. Then the complement $S^3 \setminus U$ is diffeomorphic to $(T^2 \setminus p) \times [0,1]$. Under this identification, the meridian $m$ of $K$ is isotopic to the longitude $l_G$ of $G$, and the longitude $l$ of $K$ is isotopic to the meridian $m_G$ of $G$.
\end{lemma}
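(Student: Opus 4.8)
The plan is to identify the complement $S^3 \setminus U$ explicitly as a handlebody-type piece by using the fact that $K$ is the unknot, and that the ``tadpole'' graph $G$ is a specific unknotted embedding. First I would recall that when $K$ is the unknot, $S^3 \setminus K$ is itself an open solid torus, and the complement of a tubular neighborhood $N_K$ is a closed solid torus $V = D^2 \times S^1$. Under this identification the meridian $m$ of $K$ (which bounds a disk in $N_K$) is a \emph{longitude} of $V$, i.e.\ it generates $\pi_1(V) \cong \Z$, while the longitude $l$ of $K$ (which bounds a disk in $S^3 \setminus N_K = V$) is a \emph{meridian} of $V$. Now the graph $G$ sits inside $V$: its loop $C$ is by construction parallel to the meridian $m$ of $K$, hence it is a core curve (longitude) of the solid torus $V$, and the arc $a$ runs from this core out to the boundary point $p$. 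So $G$ is a ``core circle plus a radial arc to the boundary'' inside $V$.

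Next I would compute the complement inside $V$ of an $\epsilon$-neighborhood of $G$. Removing a tubular neighborhood $N_G$ of the loop $C$ (a core of $V$) from $V$ yields $T^2 \times [0,1]$: one boundary torus is $\partial V = \partial(S^3\setminus N_K)$ and the other is $\partial N_G = T_p$. Then removing the (small) neighborhood of the arc $a$, which connects the two boundary tori and meets $\partial V$ only at $p$, amounts to drilling a properly embedded unknotted arc out of $T^2 \times [0,1]$ whose endpoints lie one on each boundary component. Drilling such a boundary-to-boundary arc out of $T^2 \times [0,1]$ (equivalently, attaching a $1$-handle along the two boundary tori and then recording the complement) produces exactly $(T^2 \setminus p) \times [0,1]$: the puncture of $T^2$ is the trace of the drilled arc. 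This is the core homeomorphism; I would justify it by noting that $T^2 \times [0,1]$ minus an unknotted arc from $T^2\times\{0\}$ to $T^2\times\{1\}$ deformation retracts onto, or is standardly homeomorphic to, the product of the once-punctured torus with an interval — one can see this by isotoping the arc to be of the form $\{q_0\}\times[0,1]$ and observing $(T^2\setminus\{q_0\}) \times[0,1]$ is what remains.

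Having fixed the identification $S^3\setminus U \cong (T^2\setminus p)\times[0,1]$, the statements about which curves correspond to which are read off from the two sides. On the $K$-side, $\partial V$ carries the meridian $m$ and longitude $l$ of $K$. On the $G$-side, the once-punctured torus $T_p = \partial N_G$ carries the meridian $m_G$ (bounding a disk in $N_G$) and the longitude $l_G$ (a push-off of the core $C$). Since $C$ is isotopic to the meridian $m$ of $K$ within $S^3\setminus U$ (that is what ``$C$ parallel to $m$'' means, and the annulus realizing this parallelism lies in the complement), we get $l_G \simeq m$. For the other pair: $m_G$ bounds a disk in $N_G$, hence it is a curve that is ``filled in'' on the $G$-side; on the $K$-side the curve that is filled in by $S^3\setminus K \to S^3$ is the longitude $l$ of $K$. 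More directly, in $H_1(S^3\setminus U)\cong \Z^2$ one checks the intersection pattern: $m$ and $l_G$ both pair trivially with the $G$-core and with $l$, while $m_G$ and $l$ both pair trivially with $C$; matching up the unique (up to sign) dual basis gives $m \leftrightarrow l_G$ and $l\leftrightarrow m_G$.

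The main obstacle I anticipate is the homeomorphism $T^2\times[0,1] \setminus (\text{unknotted boundary-to-boundary arc}) \cong (T^2\setminus p)\times[0,1]$ — making precise that the drilled arc is unknotted and may be straightened to a vertical arc. This requires knowing that the arc $a$, built as a radial arc in the annulus between $C$ and $\partial V$, is isotopic rel endpoints (allowing the endpoints to move in the two boundary tori) to a trivial vertical arc; this is where the specific choice of $G$ for the unknot is used, and it is essentially a statement that $G\cup K$ is an unknotted ``eyeglass''/handcuff-type graph in $S^3$. Once that isotopy is in hand, the rest is bookkeeping with meridians and longitudes, and the curve correspondences follow from how each boundary curve is defined (bounding a disk on one side or the other).
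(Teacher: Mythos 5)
Your proposal is correct and follows essentially the same route as the paper: the paper observes that $K\cup C$ is the Hopf link (your solid-torus-minus-core picture is the same computation), so its complement is $T^2\times[0,1]$, and then straightens the arc $a$ to a vertical arc using the annulus between $C$ and $\partial N_K$, exactly the step you flag as the key obstacle. Your additional verification of the curve correspondences $m\leftrightarrow l_G$ and $l\leftrightarrow m_G$ is a welcome elaboration of what the paper leaves implicit.
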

\begin{proof}
 If we write $U_a$ for $U$ with the arc joining $K$ and the loop in $G$ removed, then $U_a$ is the Hopf link, so $S^3\setminus U_a$ is the thickened torus $T^2 \times [0,1]$. Since the arc $a$ can be embedded in a vertical disc in $T^2 \times [0,1]$, it is isotopic to a vertical arc. Therefore, $S^3 \setminus U$ is diffeomorphic to $(T^2\setminus p)\times [0,1]$.
\end{proof}
\begin{corollary}\label{cor_unknotbimodule}
 We have an isomorphism $\bar K_{q,t}(K) \cong \SH_{q,t}$ of $\SH_{q,t}$-bimodules.
\end{corollary}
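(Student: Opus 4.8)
The plan is to reduce this to the computation $K_{q,t}(T^2) \cong \SH_{q,t}$ of Corollary~\ref{cor_surfacedef} (via Corollary~\ref{corbqdkbsm}), using the preceding lemma to identify the relevant complement. Thicken the loop of $f(G)$ to a solid torus $N_G$ with boundary torus $T_p$, thicken the arc of $f(G)$ as well, and let $U$ be the resulting neighborhood of $K \cup f(G)$; the preceding lemma gives a diffeomorphism $S^3 \setminus U \stackrel{\sim}{\to} (T^2 \setminus p)\times[0,1]$ carrying the marked curves as stated there. Writing $T' = T^2\setminus p$, I would build a linear map $\psi: K_q(T') \to \bar K_{q,t}(K)$ sending a framed link $L \subset S^3 \setminus U \cong T' \times[0,1]$ to $L \cup f(G)$, where $K_q(T')$ is the skein module appearing in Theorem~\ref{thm_bp}.

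For surjectivity: an arbitrary framed link compatible with $f$ is $f(G)$ together with some loop components, and since $f(G)$ is a spine of $U$, the complement $S^3 \setminus K \setminus f(G)$ deformation retracts onto $S^3 \setminus U$; hence those loops can be isotoped into $S^3 \setminus U$ without disturbing $f(G)$, and such isotopies are permitted in the definition of a link compatible with $f$. The map $\psi$ is well defined into the quotient $\bar K_{q,t}(K)$ because the first two relations in (\ref{modifiedkbsm_rel}) are exactly the Kauffman relations already imposed in $K_q(T')$.

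Next I would compute $\Ker\psi$. Under the diffeomorphism of the preceding lemma the thickened arc of $f(G)$ becomes the thickened puncture of $T'$, so a loop $L_s$ encircling the arc of $G$ corresponds to the peripheral loop $\delta \in K_q(T')$ encircling the puncture (cf.\ the proof of Corollary~\ref{corbqdkbsm}), and the third relation in (\ref{modifiedkbsm_rel}) reads $\delta + (q^2 t^{-2} + q^{-2} t^2)\varnothing = 0$. Exactly as in the proof of Corollary~\ref{corbqdkbsm}, any appearance of this relation inside a stack of links can be slid to the top of $T'\times[0,1]$, so $\Ker\psi$ is the ideal generated by $\delta + q^2 t^{-2} + q^{-2} t^2$ — which, by Corollary~\ref{corbqdkbsm}, is precisely the kernel of $K_q(T') \twoheadrightarrow K_{q,t}(T^2)$. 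Hence $\psi$ descends to an isomorphism $K_{q,t}(T^2) \stackrel{\sim}{\to} \bar K_{q,t}(K)$; composing with Corollary~\ref{cor_surfacedef} gives the desired isomorphism of vector spaces, and one checks it sends the identity $\e \in \SH_{q,t}$ to the class of the empty link compatible with $f$ (i.e.\ $f(G)$ alone).

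It remains to verify this is an isomorphism of $\SH_{q,t}$-bimodules. The left $K_{q,t}(T^2)$-action on $\bar K_{q,t}(K)$ is push-in from $\partial(S^3\setminus K)$ and the right action is push-in from $T_p$; under the lemma's diffeomorphism these two tori become the two ends $T'\times\{0\}$ and $T'\times\{1\}$ of $T'\times[0,1]$, with the correspondence $m \leftrightarrow l_G$, $l \leftrightarrow m_G$ recorded in the lemma. Since all of these actions are instances of the functoriality of $K_{q,t}$ under oriented embeddings and the diffeomorphism intertwines the corresponding embeddings, $\psi$ is a bimodule map, i.e.\ left (resp.\ right) push-in matches left (resp.\ right) multiplication in $K_{q,t}(T^2)\cong\SH_{q,t}$. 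The main obstacle is bookkeeping rather than conceptual: one must make the spine argument for surjectivity precise (that loop components genuinely isotope off $U$ rel $f(G)$), and one must track the two boundary ends and the marked curves carefully enough to be sure that ``left'' and ``right'' are not interchanged.
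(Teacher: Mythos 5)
Your proposal is correct and follows exactly the route the paper intends: the paper offers no written proof, treating the corollary as immediate from the preceding lemma's identification $S^3\setminus U\cong (T^2\setminus p)\times[0,1]$ together with Corollaries \ref{corbqdkbsm} and \ref{cor_surfacedef}, and your argument is a careful spelling-out of precisely that reduction, including the kernel computation via the peripheral loop $\delta$ and the matching of the two boundary actions. The only caveats you raise (isotoping loop components off $U$ rel $f(G)$, and checking that left and right are not interchanged) are genuine bookkeeping points, and you resolve the latter correctly using $m\simeq l_G$, $l\simeq m_G$.
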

\begin{proof}
Let $M' = (S^3 \setminus K) \setminus (\textrm{nbhd of } G)$. By Lemma \ref{lemma:hopf}, there is a diffeomorphism $\varphi: M' \to (T^2\setminus p)\times [0,1]$. By Remark \ref{rmk:3manrephrasing}, $\bar K_{q,t}(K)$ is the quotient of $K_q(M')$ by the relation that says: if $L_p$ is a loop around the arc $A$ of the graph $G$, then $L_p = -q^2t^{-2}-q^{-2}t^2$. Since this is exactly the relation defining $K_{q,t}(T^2)$, this shows that $\varphi$ induces an isomorphism of vector spaces, which implies $\bar K_{q,t}(K) $ is isomorphic to $\SH_{q,t}$ as vector spaces. The fact that $\varphi$ respects the bimodule structure follows from the third sentence of Lemma \ref{lemma:hopf} and the discussion following Remark \ref{rmk:arcchoices}.
\end{proof}

\begin{theorem}\label{thm_topunknot}
 The left module $K_{q,t}(K)$ is isomorphic to $\e P^-$.
\end{theorem}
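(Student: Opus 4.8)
The plan is to deduce Theorem \ref{thm_topunknot} directly from the bimodule computation of the previous subsection. By Corollary \ref{cor_unknotbimodule} there is an isomorphism $\bar K_{q,t}(K)\cong\SH_{q,t}$ of $\SH_{q,t}$-bimodules, and by Definition \ref{def_leftmodquotient} we have $K_{q,t}(K)=\bar K_{q,t}(K)\otimes_{\SH_{q,t}}\e P^-$. Since the isomorphism of Corollary \ref{cor_unknotbimodule} is in particular compatible with the right $\SH_{q,t}$-module structures, it induces
\[
K_{q,t}(K)\;\cong\;\SH_{q,t}\otimes_{\SH_{q,t}}\e P^-\;\cong\;\e P^- ,
\]
the last map being the canonical isomorphism $R\otimes_R M\xrightarrow{\sim}M$; and because Corollary \ref{cor_unknotbimodule} also intertwines the left actions, the left $\SH_{q,t}$-module structure on $K_{q,t}(K)$ (which comes from left multiplication on $\bar K_{q,t}(K)$) is carried to the given left action on $\e P^-$. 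So, modulo Corollary \ref{cor_unknotbimodule}, the argument is formal.

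Accordingly, the only thing I would actually need to spell out is bookkeeping: that the right $\SH_{q,t}$-module structure on $\bar K_{q,t}(K)$ used in the tensor product of Definition \ref{def_leftmodquotient} --- the one described via the boundary loops $l_G$, $m_G$, $z_G$ of the graph $G$ --- is precisely the structure intertwined by the bimodule isomorphism of Corollary \ref{cor_unknotbimodule}. This is immediate from the way that isomorphism was set up (the loops $m$, $l$ of $K$ and $l_G$, $m_G$ of $G$ are matched under the diffeomorphism $S^3\setminus U\cong(T^2\setminus p)\times[0,1]$), but it is worth recording explicitly. There is no serious obstacle at this stage: all of the geometric content has already gone into Corollary \ref{cor_unknotbimodule}, which rests on that diffeomorphism.

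As an alternative route --- useful as a consistency check with the quotient description of $K_{q,t}(K)$ given just after Definition \ref{def_leftmodquotient} --- one could instead verify directly that $\e P^-$ is cyclic as a left $\SH_{q,t}$-module, generated by $\delta_t$ (indeed $\e P^-=\C[x]\delta_t=\SH_{q,t}\delta_t$ by (\ref{eq_symsigndecomp})), and that its left annihilator is the left ideal generated by $y+tq^{-2}+t^{-1}q^2$ and $z+tq^{-3}x$. Checking that these two elements annihilate $\delta_t$ is a short computation of the actions of $\e(Y+Y^{-1})\e$ and of $z$ on $\delta_t$ in the sign representation (the first using Remark \ref{rmk_signmacpolys} for the Macdonald eigenvalue at $n=0$), and that they generate the whole annihilator follows from a graded-dimension count using the PBW property of $\H_{q,t}$. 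I would present the first (tensor-product) argument, since it requires no new calculation.
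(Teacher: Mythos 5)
Your proposal is correct and is exactly the paper's argument: the paper's proof also consists of combining Corollary \ref{cor_unknotbimodule} with Definition \ref{def_leftmodquotient}, so that $K_{q,t}(K)=\bar K_{q,t}(K)\otimes_{\SH_{q,t}}\e P^-\cong\SH_{q,t}\otimes_{\SH_{q,t}}\e P^-\cong\e P^-$. You have simply spelled out the formal tensor-product step more explicitly than the paper does.
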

\begin{proof}
By Definition \ref{def_leftmodquotient}, $K_{q,t}(K) = \bar K_{q,t} \otimes_{\SH_{q,t}} \e P^-$. Then Corollary \ref{cor_unknotbimodule} shows that $K_{q,t}(K) = \SH_{q,t}\otimes_{\SH_{q,t}} \e P^-$, which completes the claim.
\end{proof}    

We write $\phi(P^-)$ for the twist of $P^-$ by the anti-automorphism $\phi$ defined in (\ref{eq_antiautomorphism}).
\begin{corollary}\label{corollary_pkp}
 If $K$ is the unknot, the vector space $ \phi(P^-)\e \otimes_{\SH_{q,t}} \bar K_{q,t}(K) \otimes_{\SH_{q,t}} \e P^-$ is 1-dimensional.
\end{corollary}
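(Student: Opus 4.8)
The plan is to reduce this statement entirely to Corollary \ref{cor_unknotbimodule} together with Lemma \ref{signpairing}. First I would invoke Corollary \ref{cor_unknotbimodule}, which gives an isomorphism $\bar K_{q,t}(K) \cong \SH_{q,t}$ of $\SH_{q,t}$-bimodules when $K$ is the unknot. Substituting this into the double tensor product and using the canonical isomorphism $M \otimes_A A \otimes_A N \cong M \otimes_A N$ (for any ring $A$, right $A$-module $M$, and left $A$-module $N$), applied with $A = \SH_{q,t}$, $M = \phi(P^-)\e$, and $N = \e P^-$, yields
\[
\phi(P^-)\e \otimes_{\SH_{q,t}} \bar K_{q,t}(K) \otimes_{\SH_{q,t}} \e P^- \;\cong\; \phi(P^-)\e \otimes_{\SH_{q,t}} \e P^-.
\]

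Next I would observe that, since the anti-automorphism $\phi$ of (\ref{eq_antiautomorphism}) fixes $T$, it fixes the idempotent $\e = (T+t^{-1})/(t+t^{-1})$, so that twisting by $\phi$ commutes with multiplication by $\e$. Hence $\phi(P^-)\e$ is precisely the right $\SH_{q,t}$-module $P^-\e$ appearing in Lemma \ref{signpairing}, and the right-hand side above is exactly $P^-\e \otimes_{\SH_{q,t}} \e P^-$. By (the proof of) Lemma \ref{signpairing} this space is one-dimensional: there the isomorphism $\HH_{q,t} \to \HH_{q,-t^{-1}}$ identifies the sign representation with a twist of the polynomial representation, so that Lemma \ref{lemma_prepairing} gives that $\phi(P^-) \otimes_{\HH_{q,t}} P^-$ is one-dimensional, and Morita equivalence (\ref{mor}) transports this to the spherical side. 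Combining the two displayed isomorphisms completes the proof.

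The point worth checking carefully, rather than a genuine obstacle, is that the isomorphism of Corollary \ref{cor_unknotbimodule} is used here as a \emph{bimodule} isomorphism — i.e.\ it intertwines both the left and the right $\SH_{q,t}$-actions — so that it may legitimately be inserted into the middle of the double tensor product; this is exactly how that corollary is stated. As noted in the introduction, the substantive input is the PBW property of $\HH_{q,t}$, which enters only indirectly: through Lemma \ref{lemma_prepairing} (used inside the proof of Lemma \ref{signpairing}) and through the identification $\bar K_{q,t}(K) \cong \SH_{q,t}$ underlying Corollary \ref{cor_unknotbimodule}. No further computation is required.
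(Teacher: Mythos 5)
Your proposal is correct and follows essentially the same route as the paper: substitute the bimodule isomorphism $\bar K_{q,t}(K) \cong \SH_{q,t}$ from Corollary \ref{cor_unknotbimodule} into the middle of the tensor product, collapse it via $M \otimes_A A \otimes_A N \cong M \otimes_A N$, and conclude by the one-dimensionality established in (the proof of) Lemma \ref{signpairing}. Your added remarks — that $\phi$ fixes $\e$ so $\phi(P^-)\e = P^-\e$, and the explicit tracing of the one-dimensionality back through Lemma \ref{lemma_prepairing} and Morita equivalence — are correct and, if anything, slightly more careful than the paper's two-line argument.
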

\begin{proof}
 By Corollary \ref{cor_unknotbimodule}, there is a bimodule isomorphism $\bar K_{q,t}(K) \cong \SH_{q,t}$. By Lemma \ref{signpairing}, the space of linear functions $\phi(P^-)\e \otimes_{\SH_{q,t}} \e P^- \to \C$ is 1-dimensional, which proves the claim.
\end{proof}

\begin{remark}\label{remark_generalpairing}
It is natural to ask whether this corollary can be generalized to non-trivial knots. This seems like a subtle question in general - in particular, the proof in the case of the unknot relies on the existence and (essential) uniqueness of Cherednik's pairing (see Lemma \ref{signpairing}), which is a non-trivial fact. 
\end{remark} 

\section{A topological interpretation of Cherednik's construction}\label{sec_topcherednik}
In Section \ref{sec_cabling} we described how the colored Jones polynomials of a cable of a knot $K$ can be computed from the skein module $K_q(S^3\setminus K)$. In particular, the colored Jones polynomials of torus knots can be computed from the skein module of the unknot. In this section we use the topological construction of the previous sections to define polynomials $J_{n,r,s}(q,t)$ that satisfy
\begin{equation}\label{toppolys}
 J_{n,r,s}(q,t=1) = J_{n}(K_{r,s};q)
\end{equation}
where $K_{r,s}$ is the $(r,s)$-torus knot. (Technically, $J_{n,r,s}$ is a rational function of $q$ and $t$, but this is because the Macdonald polynomials are rational functions of $q$ and $t$.)

In \cite{Che13}, Cherednik used the double affine Hecke algebra of type $\g$ to construct 2-variable polynomials that specialize to the colored Jones polynomials of type $\g$. We recall his construction for $\g = \sl_2$ and then show his polynomials are equal to the polynomials $J_{n,r,s}(q,t)$ (up to a renormalization).

\begin{rmk}
 It turns out that Cherednik's parameter $t$ is slightly different than ours. To try to eliminate confusion, in this section we will write $\tc \in \C^*$ for Cherednik's parameter and $t \in \C^*$ for ours. In general we will use a subscript $c$ to indicate objects which depend on Cherednik's parameter.
\end{rmk}

\subsection{Cherednik's construction}\label{sec_cheredniktorus}
We first recall Cherednik's construction from \cite{Che13}. Let $P^+_c = \C[X^{\pm 1}]$ be the polynomial representation of $\H_{q,\tc}$, and define the evaluation map 
\begin{equation*}
 \epsilon_c: P^+_c \to \C,\quad \epsilon_c(f(X)) = f(t_c)
\end{equation*}
In the notation of Section \ref{cor_pairings}, we can write this evaluation map as $\epsilon(-) = \langle 1, -\rangle_c$, where the pairing on the right is defined in Corollary \ref{cor_pairings}. 
If we identify $\e P^+_c \cong \C[x] = \C[X+X^{-1}]$ and restrict the evaluation map, for $g(x) \in \C[x]$ we obtain
\begin{equation}\label{cheredniksevaluation}
 \epsilon_c: \e P^+_c \to \C,\quad \epsilon_c(g(x)) = \epsilon_c(g(X+X^{-1})) = g(t_c+t^{-1}_c)
\end{equation}

We recall Cherednik's construction of an $\SL_2(\Z)$ action on $\HH_{q,\tc}$ via the following formulas:
\begin{alignat}{4}\label{cherednikssl2zaction}
 \left[\begin{array}{cc}1&1\\0&1\end{array}\right] &\mapsto \tau^+, &\quad \tau^+(X) &= X, &\quad \tau^+(T) &=T, &\quad \tau^+(Y) &= q^{-1}XY\\
 \left[\begin{array}{cc}1&0\\1&1\end{array}\right] &\mapsto \tau^-, &\quad \tau^-(X) &= qYX, &\quad \tau^-(T) &=T, &\quad \tau^-(Y) &= Y\notag
\end{alignat}

Let $\gamma_{r,s} \in \SL_2(\Z)$ be a matrix satisfying $\gamma_{r,s}(0,1) = (r,s)$, and let $p_n \in \C[X+X^{-1}] \subset \C[X^{\pm 1}]$ be the Macdonald polynomial of Theorem \ref{thdef}. Cherednik then gives the following definition:
\begin{definition}[\cite{Che13}]
The \emph{nonreduced DAHA-Jones invariant} is defined by
\begin{equation}\label{cherednikpoly}
 P_{n, r,s}(q,t_c) := (-q)^{rs(n^2-1)}\epsilon_c(\gamma_{r,s}(p_{n-1}(Y+Y^{-1}))\cdot 1)
\end{equation}
\end{definition}
These polynomials do not depend on the choice of $\gamma_{r,s}$ (see Lemma \ref{lemma_dependonrs}).

\begin{remark}\label{rmk_normalization}
 The polynomials above are defined in \cite[(2.18)]{Che13}, but we have changed the normalization by a power of $-q$. Note that his $b$ is our $n-1$, so $b(b+2) = n^2-1$. Also, Cherednik's $q^{1/2}$ is our $q^2$, and his $t^{1/2}$ is our $t$. Our Jones polynomials are normalized so that the $n^\mathrm{th}$ colored Jones polynomial of the unknot is $(-1)^{n-1}(q^{2n}-q^{-2n}) / (q^2-q^{-2})$.
\end{remark}

\subsection{The topological construction}
We now give a topological interpretation of these polynomials (after establishing some notation). Let $K$ be the unknot. Recall from (\ref{eq_symsigndecomp}) that as a $\C[x]$-module we have $K_{q,t}(K) = \C[x]\delta_t$, and that by Theorem \ref{thm_topunknot} the action of $\SH_{q,t}$ is determined by 
\begin{equation}\label{unknotaction}
 y\cdot \delta_t = -(tq^{-2}+t^{-1}q^2)\delta_t,\quad z\cdot \delta_t = -q^{-3}t^{-1}x\delta_t
\end{equation}
We first compare the module $K_{q,t}(K)$ to the classical skein module $K_q(K)$.
\begin{lemma}\label{lemma_teq1isskeinmodule}
 When $K$ is the unknot, the $A_q^{\Z_2}$-modules $K_q(K)$ and $K_{q,t=1}(K)$ are isomorphic.
\end{lemma}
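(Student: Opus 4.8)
The plan is to make both modules completely explicit and to match them. Since $K$ is the unknot, the complement $V := S^3\setminus K$ is a solid torus, so $K_q(K)=K_q(V)\cong\C[u]$ with $u$ the core loop and the empty link $\varnothing$ corresponding to $1$. Under the canonical identification of $\partial V$ with $T^2$ from Remark~\ref{remark_canonicalmodulestructure} (meridian $m$ of $K$ in the first factor, longitude $l$ in the second), the curve $m$ is isotopic in $V$ to the core, so $x$ acts on $K_q(V)$ as multiplication by $u$ and $K_q(V)=\C[x]\cdot\varnothing$ is free of rank one over $\C[x]$. On the other side, Theorem~\ref{thm_topunknot} identifies $K_{q,t}(K)$ with $\e P^-$; at $t=1$ we have $\SH_{q,1}\cong A_q^{\Z_2}$, and $K_{q,1}(K)=\C[x]\delta_1$ with $\delta_1 = X^{-1}-X$, also free of rank one over $\C[x]$, the action of $A_q^{\Z_2}$ on the generator being recorded in (\ref{unknotaction}): $y\cdot\delta_1 = -(q^2+q^{-2})\delta_1$ and $z\cdot\delta_1 = -q^{-3}x\delta_1$. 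Thus everything reduces to showing $K_q(V)\cong\e P^-|_{t=1}$ as $A_q^{\Z_2}$-modules.

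There are two natural ways to finish. The hands-on route is to define the $\C[x]$-linear isomorphism $g\colon K_{q,1}(K)\to K_q(V)$ by $g(\delta_1)=\varnothing$; since $A_q^{\Z_2}\cong\SH_{q,1}=B_{q,1}$ is a quotient of $B'_q$, Lemma~\ref{lemma_invsubalgebraiso} shows $g$ is $A_q^{\Z_2}$-linear once one checks $y\cdot\varnothing = -(q^2+q^{-2})\varnothing$ and $z\cdot\varnothing = -q^{-3}u$ in $K_q(V)$. The first is immediate: because $K$ is unknotted, $l$ bounds a disk in $V$ (it is a meridian disk of the solid torus $V$), so pushing $l$ into $V$ gives a trivial loop and the scalar $-q^2-q^{-2}$. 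For the second, the $(1,1)$-curve (our $z$) pushed into $V$ is homotopic to the core and is isotoped to $u$ by sliding it across a meridian disk to undo its meridional wrap; this costs one unit of framing, and the usual Kauffman-bracket framing correction (the uncolored, i.e.\ $n=2$, case of the twist computation in the proof of Lemma~\ref{lemma_topskeincable}) gives $z\cdot\varnothing = (-q)^{-3}u = -q^{-3}u$. Since $g$ then carries a $\C[x]$-basis to a $\C[x]$-basis and intertwines the $A_q^{\Z_2}$-action, it is the desired isomorphism. Alternatively, one can avoid the skein computations by invoking the explicit description of the solid-torus skein module as the $A_q^{\Z_2}$-module $M\e$ from \cite[Lemmas 5.4, 5.5]{BS14} (already used in Section~\ref{sec_cabling}), applied to $V$ regarded as a tubular neighborhood of the dual core with meridian and longitude interchanged, and to observe that in our variables $M\e$ is exactly $\e P^-$ at $t=1$ (generator $X-X^{-1}=\varnothing$, $X$ acting by multiplication).

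The main obstacle is the sign and framing bookkeeping in $z\cdot\varnothing$: getting the power of $-q$ exactly right (so that it lands on $-q^{-3}$ rather than $-q^{3}$) requires care with the orientation convention governing which twist is created when the meridional wrap is cancelled, which is precisely the type of sign subtlety flagged elsewhere in the paper. One clean way to be safe, if one prefers not to invoke \cite{BS14}, is to compute $y\cdot u$ in $K_q(V)$ by resolving the resulting Hopf band in the solid torus and then extract $z\cdot\varnothing$ from the relation $qxy-q^{-1}yx=(q^2-q^{-2})z$ applied to $\varnothing$; but this merely relocates the same sign issue into the computation of $y\cdot u$. Invoking the explicit module $M\e$ of \cite{BS14} sidesteps it entirely, at the cost of the same finite verification that $y$ and $z$ act on the generator as in (\ref{unknotaction}).
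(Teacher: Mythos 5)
Your proof is correct and takes essentially the same route as the paper's: both identify the two sides as free rank-one $\C[x]$-modules generated by $\delta_{t=1}$ and $\varnothing$, check that $y$ and $z$ act identically on these generators, and conclude via Lemma \ref{lemma_invsubalgebraiso}. The only difference is that you actually carry out the skein-theoretic verification of $y\cdot\varnothing$ and $z\cdot\varnothing$ in the solid torus (and flag the framing sign), a step the paper's proof simply asserts as agreement with the formulas (\ref{unknotaction}) at $t=1$.
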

\begin{proof}
 The formulas in (\ref{unknotaction}) follow directly from Theorem \ref{thm_topunknot}. As a $\C[x]$-module, the classical skein module $K_q(K)$ is freely generated by the empty link. If the formulas (\ref{unknotaction}) are specialized to $t=1$, then the action of $y$ and $z$ on $\delta_t$ agrees with the action of $y$ and $z$ on the empty link. Then Lemma \ref{lemma_invsubalgebraiso} shows that the $\C[x]$-module isomorphism sending $\delta_{t=1}$ to the empty link is an isomorphism of $A_q^{\Z_2}$-modules.
\end{proof}

We now need the analogue of the topological pairing in (\ref{knotpairing}) which is provided by Lemma \ref{signpairing}. We recall that this pairing is defined by
\begin{equation}
 \langle -,-\rangle_t: \phi(P^-)\e \otimes_{\SH_{q,t}} K_{q,t}(K) \to \C,\quad \langle \delta_t, \varnothing\rangle = 1
\end{equation}
(Recall that $\phi$ is the anti-automorphism defined in (\ref{eq_antiautomorphism}), and $\delta_t = 1\cdot (Y\e)$ is the generator of $\phi(P^-)\e$, and $\varnothing$ is the empty link in $K_{q,t}(K)$.) 
\begin{lemma}\label{lemma_teq1ispairing}
 When $t=1$, the pairing $\langle -,-\rangle_{t=1}$ is the same as the topological pairing from (\ref{knotpairing}).
\end{lemma}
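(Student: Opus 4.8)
The plan is to identify both sides of the claimed equality of pairings when $t=1$. On the algebraic side, the pairing $\langle -,-\rangle_{t=1}$ from Lemma \ref{signpairing} is, by Corollary \ref{cor_pairings} and the proof of Lemma \ref{signpairing}, the unique pairing $\phi(P^-)\e \otimes_{\SH_{q,1}} \e P^- \to \C$ normalized by $\langle \delta_1,\delta_1\rangle = 1$. On the topological side, the pairing from (\ref{knotpairing}), applied to the unknot $K$, is the pairing $K_q(D^2\times S^1) \otimes_{K_q(T^2)} K_q(S^3\setminus K) \to K_q(S^3) = \C$. I would first invoke Lemma \ref{lemma_teq1isskeinmodule} to replace $K_{q,t=1}(K)$ by the classical skein module $K_q(S^3\setminus K)$ of the unknot complement as an $A_q^{\Z_2}$-module, and correspondingly Lemma \ref{lemma_teq1ispairing}'s left slot $\phi(P^-)\e$ at $t=1$ should be identified with $K_q(D^2\times S^1)$; the latter identification is the content (for the unknot) of the module description in \cite[Lemmas 5.4, 5.5]{BS14} that was already used in the proof of the topological cabling corollary, where $\delta \e = (Y-Y^{-1})\e$ is matched with the empty link.

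Once both the left module $\phi(P^-)\e|_{t=1}$ and the right module $\e P^-|_{t=1} = K_{q,t=1}(K)$ are identified with $K_q(D^2\times S^1)$ and $K_q(S^3\setminus K)$ respectively (compatibly with the $A_q^{\Z_2} = K_q(T^2)$ action), the claim reduces to checking that the two pairings agree on a single pair of generators — since both pairings are uniquely determined by their value on one such pair (the algebraic one by Lemma \ref{signpairing}, the topological one because $K_q(S^3\setminus K)$ is cyclic over $K_q(T^2)$ for the unknot). So I would compute $\langle \varnothing, \varnothing\rangle$ for the topological pairing: gluing the two solid tori along $T^2$ produces $S^3$, and the image of $\varnothing \otimes \varnothing$ in $K_q(S^3) = \C$ is $1$. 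This matches the normalization $\langle \delta_1,\delta_1\rangle = 1$ once $\delta_1$ is identified with the empty link on both sides.

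The step I expect to be the main obstacle is making precise the identification of the \emph{twisted} module $\phi(P^-)\e$ at $t=1$ with the skein module $K_q(D^2\times S^1)$ of the solid torus, as opposed to $\e P^-$ with the complement — i.e., tracking how the anti-automorphism $\phi$ (which swaps $X \leftrightarrow Y^{-1}$) corresponds topologically to interchanging the roles of meridian and longitude under the genus-one splitting of $S^3$. Concretely, in the splitting $S^3 = (D^2\times S^1) \cup_{T^2} (S^3\setminus K)$ with $K$ the core of the second solid torus, the meridian of one side is the longitude of the other, and this is exactly the swap effected by $\phi$ on $A_q^{\Z_2}$; I would spell out that $\phi$ restricted to $\SH_{q,1} = A_q^{\Z_2}$ is induced by the mapping class of $T^2$ exchanging $x$ and $y$ (up to the $\Z_2$-symmetrization), which is precisely the gluing used in (\ref{knotpairing}). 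With this dictionary in hand, both pairings are pullbacks of the same bilinear map $K_q(D^2\times S^1)\otimes_{K_q(T^2)} K_q(S^3\setminus K)\to \C$, and equality on the empty-link generators finishes the proof.

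\begin{proof}
By Lemma \ref{lemma_teq1isskeinmodule}, at $t=1$ the right $\SH_{q,1} = A_q^{\Z_2}$-module $K_{q,t=1}(K)$ is isomorphic to the classical skein module $K_q(S^3\setminus K)$ of the unknot complement, with the generator $\delta_1 \in \e P^-$ corresponding to the empty link $\varnothing$. Dually, the left module $\phi(P^-)\e$ at $t=1$ is, via the anti-automorphism $\phi$ of (\ref{eq_antiautomorphism}) and the description of $K_q(D^2\times S^1)$ as a module over $A_q^{\Z_2}$ used in the proof of Corollary \ref{corollary_topcablingformula} (cf.\ \cite[Lemmas 5.4, 5.5]{BS14}), identified with $K_q(D^2\times S^1)$; under this identification the generator $\delta_1 = 1\cdot(Y\e)$ of $\phi(P^-)\e$ corresponds to the empty link. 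The swap $X \leftrightarrow Y^{-1}$ effected by $\phi$ on $A_q^{\Z_2} = K_q(T^2)$ is induced by the element of the mapping class group of $T^2$ interchanging the meridian and longitude, which is precisely the gluing datum used to define the pairing (\ref{knotpairing}) for the genus-one splitting $S^3 = (D^2\times S^1)\cup_{T^2}(S^3\setminus K)$. Hence under these identifications both the pairing $\langle -,-\rangle_{t=1}$ from Lemma \ref{signpairing} and the topological pairing (\ref{knotpairing}) are bilinear maps $K_q(D^2\times S^1)\otimes_{K_q(T^2)} K_q(S^3\setminus K)\to \C$. Since $K_q(S^3\setminus K)$ is cyclic over $K_q(T^2)$ for the unknot, each such pairing is determined by its value on the pair $(\varnothing,\varnothing)$. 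For (\ref{knotpairing}) this value is the image of $\varnothing\otimes\varnothing$ in $K_q(S^3) = \C$, namely $1$; for $\langle -,-\rangle_{t=1}$ it is $\langle \delta_1,\delta_1\rangle = 1$ by Lemma \ref{signpairing}. The two pairings therefore agree.
\end{proof}
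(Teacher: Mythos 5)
Your proof is correct and takes essentially the same route as the paper: both pairings live on the same one-dimensional space after the identifications of Lemma \ref{lemma_teq1isskeinmodule}, are uniquely determined by their value on the generators, and are normalized identically ($\langle \varnothing,\varnothing\rangle = 1 = \langle \delta_1,\delta_1\rangle$ with $\varnothing \mapsto \delta_1$). Your extra discussion of $\phi$ as the meridian--longitude swap fleshes out a detail the paper leaves implicit, and the only loose point is that determinacy of the topological pairing by one value really rests on the one-dimensionality of the tensor product (Lemma \ref{signpairing} transported through the identification) rather than on cyclicity of one factor alone.
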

\begin{proof}
The topological pairing is uniquely determined by the requirement $\langle \varnothing, \varnothing\rangle = 1$, and the algebraic pairing is uniquely determined by the requirement $\langle \delta_t,\delta_t\rangle_t = 1$. Furthermore, the isomorphism $K_{q,t}(K) \to \e P^-$ satisfies $\varnothing \mapsto \delta_t$, which completes the proof.
\end{proof}

To simplify notation, we will view this pairing as a functional on $K_{q,t}(K)$:
\begin{equation}\label{ourevaluation}
 \epsilon: K_{q,t}(K) \to \C,\quad\quad \epsilon(u) = \langle \delta_t, u\rangle_t
\end{equation}
\begin{lemma}
 We have the equality $\epsilon(f(x)) = f(-tq^{-2}-t^{-1}q^2)$.
\end{lemma}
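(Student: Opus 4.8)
The plan is to reduce the computation of $\epsilon(f(x))$ to the known eigenvalue of the sign Macdonald polynomials under the Macdonald operator, using the fact that $\delta_t$ is the generator of the $\SH_{q,t}$-module $\e P^-$ and the defining normalization $\langle\delta_t,\delta_t\rangle_t = 1$. First I would recall from (\ref{eq_symsigndecomp}) that $K_{q,t}(K) \cong \e P^- = \C[x]\delta_t$ as a $\C[x]$-module (this is Theorem \ref{thm_topunknot}), so that $f(x)\cdot\delta_t$ makes sense and the claim is that $\epsilon(f(x)\delta_t) = f(-tq^{-2}-t^{-1}q^2)$. Because $\epsilon$ is $\C$-linear, it suffices to prove this on a $\C$-linear basis of $\C[x]$; the natural choice is the basis of sign Macdonald polynomials $\{p_n^-(x)\}$ from Definition \ref{def_signmacpolys}, since by Remark \ref{rmk_signmacpolys} the elements $p_n^-(x)\delta_t$ are eigenvectors of the Macdonald operator $Y+Y^{-1}$.

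The key computational input is the following: the pairing $\langle-,-\rangle_t$ is $\SH_{q,t}$-balanced, so for any $a \in \SH_{q,t}$ we have $\langle \delta_t, a\cdot u\rangle_t = \langle \delta_t\cdot a, u\rangle_t$ (where $\delta_t\cdot a$ refers to the right $\SH_{q,t}$-module $\phi(P^-)\e$, i.e. the action twisted by $\phi$). Applying this with $a = y = (Y+Y^{-1})\e$, and using that $\phi(Y) = X^{-1}$ so that $\phi$ sends $y$ to $x$, one gets $\langle \delta_t, y\cdot u\rangle_t = \langle \delta_t, x\cdot u\rangle_t$ — but more usefully, since $\delta_t$ is itself annihilated (up to scalar) appropriately: from (\ref{unknotaction}) applied in $\e P^-$ we know the eigenvalues. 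Concretely, the sign Macdonald polynomial $p_n^-(x) = p_n(x;q,-t^{-1})$ satisfies $L_{q,-t^{-1}}[p_n^-] = ((-t^{-1})q^{2n} + (-t^{-1})^{-1}q^{-2n})p_n^- = -(t^{-1}q^{2n} + tq^{-2n})p_n^-$ by Theorem \ref{thdef}(a). I would then argue that $\epsilon(p_n^-(x)\delta_t)$ can be computed by noting that $p_n^-(x)\delta_t$ is obtained from $\delta_t = p_0^-(x)\delta_t$ (since $p_0 = 1$) by applying a polynomial in the Macdonald operator, and that $\epsilon$ intertwines this with evaluation — this is exactly the content of (\ref{cheredniksevaluation}) transported through the isomorphism $f \leftrightarrow f$ between $\SH_{q,t}$ and $\SH_{q,-t^{-1}}$ of Lemma \ref{signpairing}. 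That isomorphism sends the evaluation-at-$t_c$ functional to ours, and under $t_c = -t^{-1}$ we get evaluation of symmetric Laurent polynomials at $X = -t^{-1}$, i.e. $f(X+X^{-1}) \mapsto f(-t^{-1} + (-t^{-1})^{-1}) = f(-t^{-1}-t)$. Wait — this must be reconciled with the target $-tq^{-2}-t^{-1}q^2$; the discrepancy is precisely the shift coming from the fact that in $K_{q,t}(K)$ the generator is $\delta_t$ rather than $1$, i.e. the empty link corresponds to $\delta_t$ and the $Y$-eigenvalue on $\delta_t$ is $-(tq^{-2}+t^{-1}q^2)$ from (\ref{unknotaction}), not $t_c + t_c^{-1}$. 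So the cleaner route is direct.

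Here is the direct route I would actually write. By $\C$-linearity of $\epsilon$ and since $\{p_n^-(x)\}_{n\ge 0}$ is a basis of $\C[x]$, it suffices to show $\epsilon(p_n^-(x)) = p_n^-(-tq^{-2}-t^{-1}q^2)$ for all $n$. Now $\epsilon(p_n^-(x)) = \langle \delta_t, p_n^-(x)\delta_t\rangle_t$. The pairing is $\SH_{q,t}$-balanced and $\phi$-twisted; applying this with the Macdonald operator $Y+Y^{-1}$ (which is $y/$... rather: $y = (Y+Y^{-1})\e$, and $\phi(y) = x$): on one hand $(Y+Y^{-1})$ acts on $p_n^-(x)\delta_t$ by the scalar $-(t^{-1}q^{2n}+tq^{-2n})$ (Remark \ref{rmk_signmacpolys} together with Theorem \ref{thdef}(a) applied to parameter $-t^{-1}$, noting $L_{q,t} = Y+Y^{-1}$ acts on $\e P^-$ as $L_{q,-t^{-1}}$ acts on the ordinary symmetric rep of $\SH_{q,-t^{-1}}$ via the isomorphism of Lemma \ref{signpairing}); on the other hand, transporting across $\phi$ shows $\langle \delta_t, (Y+Y^{-1})\cdot p_n^-(x)\delta_t\rangle_t = \langle \delta_t\cdot x, p_n^-(x)\delta_t\rangle_t$ — hmm, this gives a recursion in $n$ rather than an evaluation, and one would solve it. Alternatively and most cleanly: the functional $u \mapsto \langle\delta_t, u\rangle_t$ on $\e P^-$ is, by uniqueness in Lemma \ref{signpairing}, forced to agree (after identifying $\e P^-$ with $\e P^+_{q,-t^{-1}}$ and $\delta_t$ with the element $\e\cdot 1$) with Cherednik's evaluation $\epsilon_c$ of (\ref{cheredniksevaluation}) at parameter $t_c = -t^{-1}$, up to the scalar fixed by the normalization $\langle\delta_t,\delta_t\rangle = 1$; since $\delta_t \leftrightarrow 1$ and $\epsilon_c(1) = 1$, there is no scalar. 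Hence $\epsilon(f(x)) = f(X+X^{-1})$ evaluated at $X = -t^{-1}$... and here the correct bookkeeping of the isomorphism $\SH_{q,t}\cong\SH_{q,-t^{-1}}$ and the identification $\e P^-\cong\e P^+_{q,-t^{-1}}$ — which carries $\delta_t$ to a scalar multiple of the evaluation vector and, crucially, may rescale $x$ — produces $-tq^{-2}-t^{-1}q^2$ rather than $-t-t^{-1}$; one checks this matches the $Y$-eigenvalue in (\ref{unknotaction}) by evaluating $f = p_1 = x = X+X^{-1}$, i.e. the Macdonald operator eigenvalue with $n=1$ at parameter $-t^{-1}$ is $-(t^{-1}q^2 + tq^{-2})$, exactly the claimed value. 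The main obstacle is the careful tracking of the three identifications (the anti-automorphism $\phi$, the parameter change $t\mapsto -t^{-1}$, and the generator $\delta_t$ versus $1$) so that no stray power of $q$ or sign is lost; once that bookkeeping is pinned down by testing against $p_0 = 1$ and $p_1 = x$, the general case follows formally since both sides are the unique $\C$-linear functional sending $\delta_t \mapsto 1$ and intertwining the $(Y+Y^{-1})$-action with multiplication by the scalar $-(t^{-1}q^{2n}+tq^{-2n})$ on the $n$-th eigenvector.
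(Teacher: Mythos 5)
You have the right ingredients on the table --- the balanced pairing, the twist by $\phi$, and the eigenvalue identity $y\cdot\delta_t = -(tq^{-2}+t^{-1}q^2)\delta_t$ from (\ref{unknotaction}) --- and these are exactly what the paper's (one-line) proof uses. But you never actually execute the move. The whole lemma is: $\epsilon(f(x)) = \langle \delta_t, f(x)\cdot\delta_t\rangle_t = \langle \delta_t\cdot f(x), \delta_t\rangle_t = \langle f(\phi(x))\,\delta_t,\delta_t\rangle_t = \langle f(y)\,\delta_t,\delta_t\rangle_t = f\bigl(-tq^{-2}-t^{-1}q^2\bigr)\langle\delta_t,\delta_t\rangle_t$, using $\phi(x)=y$ and the normalization $\langle\delta_t,\delta_t\rangle_t=1$. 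Instead, you apply the balancing with $a=y$ rather than $a=x$; since $\phi(y)=x$ and $x\delta_t$ is \emph{not} an eigenvector, that direction only yields the recursion you then abandon. No Macdonald-polynomial basis, no eigenbasis of $Y+Y^{-1}$, and no induction is needed: monomials $x^n$ suffice.

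The fallback route you settle on has a genuine unresolved gap, which you flag yourself: identifying $\epsilon$ with Cherednik's evaluation $\epsilon_c$ at $t_c=-t^{-1}$ would give evaluation of $f(X+X^{-1})$ at $X=-t^{-1}$, i.e.\ $f(-t-t^{-1})$, and you defer the reconciliation with $-tq^{-2}-t^{-1}q^2$ to unspecified ``bookkeeping.'' That bookkeeping is not a formality --- it is the entire content of the lemma, and the correct comparison is with $t_c=-q^2t^{-1}$ (as in Lemma \ref{lemma_compareevaluations}), not $t_c=-t^{-1}$; the discrepancy arises because the pairing is normalized on $\delta_t$ and the relevant eigenvalue is that of $Y+Y^{-1}$ on $\delta_t$, not the naive evaluation point of the parameter change. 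Checking the cases $f=1$ and $f=x$ does not ``pin down'' the functional on all of $\C[x]$ unless you first establish that $\epsilon$ is multiplicative on $\C[x]$ (equivalently, that it is evaluation at \emph{some} point), and that is precisely what the direct balanced-pairing computation gives you for free. Replace the proposal with the two-line argument above.
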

\begin{proof}
 This follows from the identity $y\cdot \delta_t = -(tq^{-2}+t^{-1}q^2)\delta_t$ and the fact that $\phi(x) = y$.
\end{proof}

\begin{definition}
Let $p^-_n(x) \in \C[x]$ be the \emph{sign} Macdonald polynomials of Definition \ref{def_signmacpolys}, and let $\gamma_{r,s} \in \SL_2(\Z)$ satisfy $\gamma_{r,s}(0,1) = (r,s)$. We then define
\begin{equation}
 J_{n,r,s}(q,t) := (-q)^{rs(n^2-1)}\epsilon(\gamma_{r,s}(p^-_{n-1}(y))\cdot \varnothing)
\end{equation} 
\end{definition}
These polynomials do not depend on the choice of $\gamma_{r,s}$ by Lemma \ref{lemma_dependonrs}.

\begin{remark}
 Lemma \ref{lemma_teq1ispairing} shows that our pairing can be interpreted as the $t$-analogue of ``gluing the knot $K$ into $S^3\setminus K$ to obtain $S^3$.'' Under this interpretation, the polynomials $J_{n,r,s}(q,t)$ can be interpreted (roughly) as the evaluations of parallels of the $r,s$ torus knot embedded in $S^3\setminus (\mathrm{unknot } \cup G)$.
\end{remark}

\subsection{The comparison}\label{sec_comparison}
In this section we relate Cherednik's polynomials $P_{n,r,s}(q,t_c)$ to the polynomials $J_{n,r,s}(q,t)$ that were constructed in the previous section using the cabling formula. We also relate these polynomials to the colored Jones polynomials $J_n(K_{r,s}; q)$ of the $(r,s)$ torus knot.

\begin{theorem}\label{thm_cherednikpolyistoppoly_proved}
 We have the following equalities 
 \begin{align*}
P_{n,r,s}(q,t_c=-q^2t^{-1}) &= J_{n,r,s}(q,t)  \\
J_n(K_{r,s}; q) &= P_{n,r,s}(q,t_c=-q^2) = J_{n,r,s}(q, t=1)
 \end{align*}

\end{theorem}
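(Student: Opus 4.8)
The plan is to observe that $P_{n,r,s}(q,\tc)$ and $J_{n,r,s}(q,t)$ are assembled from the same data — the prefactor $(-q)^{rs(n^2-1)}$, a cyclic module over the (spherical) double affine Hecke algebra, an $\SL_2(\Z)$-action, a Macdonald-type polynomial inserted in $Y+Y^{-1}$, and an evaluation functional — and to match these ingredients one at a time after the change of parameter $\tc=-q^2t^{-1}$. The two remaining equalities in the second line will then follow, one from the first equality specialized to $t=1$ and the other from the classical cabling formula.

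\textbf{First equality.} By Theorem \ref{thm_topunknot} and Corollary \ref{cor_unknotbimodule}, for $K$ the unknot the module $K_{q,t}(K)$ is the symmetric sign representation $\e P^-$ of $\H_{q,t}$, with $\varnothing \leftrightarrow \delta_t$, and by construction the functional $\epsilon$ of (\ref{ourevaluation}) is the pairing of Lemma \ref{signpairing}; Cherednik's side uses $\e P^+_c$ for $\H_{q,\tc}$ with $\epsilon_c=\langle 1,-\rangle_c$. I would first note that $\SH_{q,t}\cong B_{q,t}$ (Theorem \ref{sphericalrelations}) depends on $t$ only through the central value in (\ref{casimir_rel}), which is invariant under $t\mapsto -q^2t^{-1}$, so $\SH_{q,t}$ and $\SH_{q,\tc}$ are canonically identified when $\tc=-q^2t^{-1}$, and then prove $K_{q,t}(K)\cong\e P^+_c$ as modules over this algebra with $\varnothing\leftrightarrow 1$. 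By Lemma \ref{lemma_invsubalgebraiso} this reduces to a computation: both modules are cyclic over $\C[x]$, and one checks that the actions of $y$ and of $z$ on the cyclic vector agree — comparing (\ref{unknotaction}) with the Macdonald operator (\ref{mac}) and with the formula for $z$ from Theorem \ref{sphericalrelations} evaluated on $1\in\e P^+_c$ at parameter $\tc$; this is exactly the comparison that forces $\tc=-q^2t^{-1}$. Once the modules and cyclic vectors are matched, the $\SL_2(\Z)$-actions agree because in both cases $\gamma_{r,s}$ acts on $Y+Y^{-1}$ through the mapping class group of the torus / Cherednik's generators $\tau^{\pm}$ of (\ref{cherednikssl2zaction}), the Macdonald polynomials agree via Theorem \ref{thdef}, the identity $p^-_n(x;q,t)=p_n(x;q,-t^{-1})$ (Definition \ref{def_signmacpolys}), and the parameter dictionary, and the two functionals agree by the uniqueness clauses in Lemma \ref{signpairing} and Corollary \ref{cor_pairings}. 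Substituting these identifications term by term into (\ref{cherednikpoly}) and into the definition of $J_{n,r,s}$ gives $P_{n,r,s}(q,-q^2t^{-1})=J_{n,r,s}(q,t)$.

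\textbf{Second chain.} The equality $P_{n,r,s}(q,-q^2)=J_{n,r,s}(q,t=1)$ is the first equality specialized to $t=1$ (where $\tc=-q^2$). For the remaining equality $J_{n,r,s}(q,t=1)=J_n(K_{r,s};q)$ I would set $t=1$ in the topological construction: Lemma \ref{lemma_teq1isskeinmodule} identifies $K_{q,1}(K)$ with the classical skein module $K_q(S^3\setminus K)$ of the unknot complement, Lemma \ref{lemma_teq1ispairing} identifies $\epsilon$ with the topological pairing (\ref{knotpairing}), and $p^-_{n-1}(\,\cdot\,;q,1)$ acts on the cyclic vector the same way as the Chebyshev polynomial $S_{n-1}$ appearing in Theorem \ref{thm_coloredjonespolys}. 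Under these identifications $J_{n,r,s}(q,1)$ becomes literally the right-hand side of the cabling formula in Corollary \ref{corollary_topcablingformula} for the $(r,s)$-cable of the $0$-framed unknot, i.e.\ for the torus knot $K_{r,s}$, so $J_{n,r,s}(q,1)=J_n(K_{r,s};q)$; combined with the previous equality this is a new proof of $P_{n,r,s}(q,-q^2)=J_n(K_{r,s};q)$ (compare (\ref{eq_specialization_intro}), Remark \ref{rmk_normalization}, and \cite{Che13}).

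\textbf{Main obstacle.} The conceptual structure is routine; the difficulty is bookkeeping. The delicate step is the change of parameter: one must track the powers of $q$ that separate the skein-theoretic longitude from Cherednik's $Y+Y^{-1}$ (encoded in the $t$-deformed relations behind (\ref{unknotaction}) and in (\ref{cherednikssl2zaction})) together with the sign twist relating $P^-$ to $P^+$, so that the $y$- and $z$-action comparisons in Lemma \ref{lemma_invsubalgebraiso} close up exactly and produce $\tc=-q^2t^{-1}$ rather than some neighbouring value. A related point that needs verification is that the mapping-class-group action on $K_{q,t}(T^2)\cong\SH_{q,t}$ coincides with Cherednik's $\SL_2(\Z)$-action (\ref{cherednikssl2zaction}), so that applying $\gamma_{r,s}$ is literally the same operation on both sides.
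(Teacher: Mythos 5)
Your proposal is correct and follows essentially the same route as the paper: the first equality is proved by matching the cyclic modules ($\e_c P_c^+$ versus $\e P^-$ with $1_c\leftrightarrow\delta_t$) via the $y$- and $z$-action comparison of Lemma \ref{lemma_invsubalgebraiso}, together with the agreement of the $\SL_2(\Z)$-actions, Macdonald polynomials, and evaluation functionals under $\tc=-q^2t^{-1}$ (the paper's Lemmas \ref{lemma_sl2actiononB}, \ref{lemma_ppluspminusiso}, \ref{lemma_compareevaluations}), and the second chain follows by specializing to $t=1$ and invoking Lemmas \ref{lemma_teq1isskeinmodule}, \ref{lemma_teq1ispairing} and the cabling formula of Corollary \ref{corollary_topcablingformula}. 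Your additional observation that the central value in (\ref{casimir_rel}) is invariant under $t\mapsto -q^2t^{-1}$, canonically identifying the two spherical algebras, is a pleasant shortcut consistent with (and slightly cleaner than) the paper's device of pulling both modules back to $B'_q$.
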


\begin{remark}
 The equality $J_n(K_{r,s}; q) = P_{n,r,s}(q, t=-q^2)$ in Theorem \ref{thm_cherednikpolyistoppoly_proved} was proved in \cite[Thm. 2.8]{Che13} by direct computation without relying on skein modules or the cabling formula. We also remark that the key fact used to prove the first equality is an isomorphism of algebras $\SH_{q,t=-q^2} \cong \SH_{q,t=1}$, and that the standard polynomial representation at $t=-q^2$, when transferred along this isomorphism, becomes the sign representation. Then the second and third equalities are proved using the cabling formula, along with an identification of $\SH_{q,t=1} = A_q^{\Z_2} \cong K_q(T^2)$.
\end{remark}

 We begin with some lemmas which will be used in the proof. First, we recall the algebra $B'_q$, which is generated by elements $x,y,z$ subject to the relations
 \[
  [x,y]_q = (q^2-q^{-2})z,\quad [z,x]_q = (q^2-q^{-2})y,\quad [y,z]_q = (q^2-q^{-2})x
 \]
From Theorem \ref{sphericalrelations}, there is a surjection $f:B'_q \to \SH_{q,t}$ given by
\begin{equation}\label{mapfrombq}
 f(x) = (X+X^{-1})\e,\quad f(y) = (Y+Y^{-1})\e,\quad f(z) = q^{-1}(XYT^{-2}+X^{-1}Y^{-1})\e
\end{equation}

We first relate the $\SL_2(\Z)$ actions that are used in constructing both polynomials. The mapping class group $\SL_2(\Z)$ of $T^2 \setminus p$ acts on $K_q(T^2\setminus p)$, and we can transport this action to $B'_q$ using the isomorphism $B'_q = K_q(T^2\setminus p)$ of Theorem \ref{thm_bp}. We define two automorphisms $\tau_\pm :B'_q \to B'_q$:
\begin{alignat}{4}\label{sl2actiononB}
 \left[\begin{array}{cc}1&1\\0&1\end{array}\right] &\mapsto \tau^+, &\quad \tau^+(x) &= x, &\quad \tau^+(y) &=z, &\quad \tau^+(z) &= q^{-1}xz-q^{-2}y\\
 \left[\begin{array}{cc}1&0\\1&1\end{array}\right] &\mapsto \tau^-, &\quad \tau^-(x) &= z, &\quad \tau^-(y) &=y, &\quad \tau^-(z) &= q^{-1}zy - q^{-2}x\notag
\end{alignat}

 \begin{lemma}\label{lemma_sl2actiononB}
  The mapping class group $\SL_2(\Z)$ acts on $B'_q$ via formula (\ref{sl2actiononB}). The induced action of $\SL_2(\Z)$ on $\SH_{q,t}$ agrees with Cherednik's $\SL_2(\Z)$ action defined in (\ref{cherednikssl2zaction}).
 \end{lemma}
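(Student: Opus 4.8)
The plan is to verify Lemma \ref{lemma_sl2actiononB} in two separate steps: first, that the formulas (\ref{sl2actiononB}) genuinely define an $\SL_2(\Z)$-action on $B'_q$ which is the one coming from the mapping class group of $T^2 \setminus p$ acting on $K_q(T^2 \setminus p)$; and second, that pushing this action forward along the surjection $f : B'_q \to \SH_{q,t}$ of Theorem \ref{sphericalrelations} reproduces Cherednik's action (\ref{cherednikssl2zaction}).

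For the first step, I would argue topologically. The mapping class group of $T^2 \setminus p$ is $\SL_2(\Z)$, generated by the two Dehn twists $\tau^\pm$ corresponding to the elementary matrices $\left[\begin{smallmatrix}1&1\\0&1\end{smallmatrix}\right]$ and $\left[\begin{smallmatrix}1&0\\1&1\end{smallmatrix}\right]$; since these homeomorphisms fix the puncture, they induce algebra automorphisms of $K_q(T^2 \setminus p)$, and transporting along the isomorphism $B'_q \cong K_q(T^2 \setminus p)$ of Theorem \ref{thm_bp} gives automorphisms of $B'_q$. It then remains to check these automorphisms are given by the stated formulas. Under the isomorphism, $x,y$ correspond to the simple closed curves $x', y'$ and $z$ to the curve $z'$ appearing with coefficient $q$ in the resolution of $x'y'$ (Figure \ref{genpunctorus}). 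The Dehn twist $\tau^+$ along $y'$ fixes $x'$, and sends the $(0,1)$-curve $y'$ to the $(1,1)$-curve, which is exactly $z'$ (by the defining property of $z'$); the remaining value $\tau^+(z')$ is then forced by applying $\tau^+$ to the $q$-commutation relation $[x,y]_q = (q^2 - q^{-2})z$, or equivalently computed directly as the resolution of the product of the twisted curves. The formulas for $\tau^-$ are obtained the same way with the roles of the two curves exchanged. (Alternatively, one can simply verify by direct computation in $B'_q$ that the assignments (\ref{sl2actiononB}) preserve the three defining relations (\ref{relationsforB'_top}) and that $\tau^+ \tau^- \tau^+ = \tau^- \tau^+ \tau^-$ together with $(\tau^+ \tau^- \tau^+)^4 = \mathrm{id}$, which presents $\SL_2(\Z)$; but the topological argument makes the $\SL_2(\Z)$-relations automatic.)

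For the second step, I would use the explicit generators of $\SH_{q,t}$ from (\ref{mapfrombq}), namely $f(x) = (X+X^{-1})\e$, $f(y) = (Y+Y^{-1})\e$, $f(z) = q^{-1}(XYT^{-2}+X^{-1}Y^{-1})\e$, and check directly that applying Cherednik's $\tau^\pm$ of (\ref{cherednikssl2zaction}) to these elements reproduces the images under $f$ of the right-hand sides of (\ref{sl2actiononB}). For instance, $\tau^+$ fixes $X$ and $T$ and sends $Y \mapsto q^{-1}XY$, so $\tau^+\bigl((Y+Y^{-1})\e\bigr) = (q^{-1}XY + qY^{-1}X^{-1})\e$; one then checks, using the relation $XY = q^2 YXT^2$ and $T\e = t\e$ (so $T^{-2}\e = t^{-2}\e$ is not what we want — rather one uses $a\e = \e a \e$ for $a = XYT^{-2}+X^{-1}Y^{-1}$), that this equals $q^{-1}(XYT^{-2}+X^{-1}Y^{-1})\e = f(z)$, matching $\tau^+(y) = z$. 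Similarly $\tau^+(x) = x$ is immediate since $\tau^+$ fixes $X$, and $\tau^+(z) = q^{-1}xz - q^{-2}y$ follows from a short computation with the DAHA relations. The case of $\tau^-$ is symmetric under the anti-automorphism $\phi$ of (\ref{eq_antiautomorphism}), which swaps the roles of $X$ and $Y$ (up to inversion) and intertwines $\tau^+$ with $\tau^-$. Since $f$ is surjective and its kernel is generated by the central element (Remark \ref{rmkcascentral}), which is fixed by any algebra automorphism, Cherednik's action on $\SH_{q,t}$ is indeed the push-forward of the $B'_q$-action.

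The main obstacle I expect is the bookkeeping in the second step: verifying the identity $\tau^+\bigl((Y+Y^{-1})\e\bigr) = q^{-1}(XYT^{-2}+X^{-1}Y^{-1})\e$ and the analogous ones for $\tau^+(z)$ and the $\tau^-$ formulas requires careful manipulation of the relations (\ref{daha}) together with the idempotent identities $T\e = t\e$, $\e T = t \e$, and the commutation facts $X\e = \e X\e$ etc. from the Remark following Theorem \ref{sphericalrelations}. None of this is conceptually deep, but it is exactly the kind of computation where sign and power-of-$q$ errors creep in, so one must track the $T^{-2}$ factors and the $q^2$ in the third DAHA relation with care. The topological identification in the first step is comparatively painless once one knows the correspondence $x' \leftrightarrow (1,0)$, $y' \leftrightarrow (0,1)$, $z' \leftrightarrow (1,1)$ of curves on the punctured torus.
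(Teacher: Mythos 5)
Your proposal is correct and matches the paper's approach: the paper sets up the transport of the mapping class group action along the isomorphism $B'_q \cong K_q(T^2\setminus p)$ in the paragraph preceding the lemma and then dismisses the rest as ``straightforward computations,'' which are exactly the generator-by-generator verifications you outline (and your key identity $(q^{-1}XY+qY^{-1}X^{-1})\e = q^{-1}(XYT^{-2}+X^{-1}Y^{-1})\e$ does check out using $X^{-1}Y^{-1}=TXYT^{-1}$ and $T^{-1}=T+t^{-1}-t$). You have simply supplied the details the paper omits.
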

\begin{proof} These are all straightforward computations.
\end{proof}

We next relate the $\SH_{q,t_c}$-module $\e_c P_c^+$ used in Cherednik's construction to the module $K_{q,t}(K)\cong \e_t P^-$ used in the topological construction. We consider both these modules as modules over $B'_q$ using the surjection $f$ in (\ref{mapfrombq}). We will write $\e_c \in \HH_{q,t_c}$ and $\e_t \in \HH_{q,t}$ for the idempotent $(T+t^{-1})/(t+t^{-1})$. With this notation, we recall that we have the following equalities of $\C[x]$-modules:
\begin{equation}\label{isoppluspminus}
 \e_c P_c^+ = \C[x] 1_c \subset \C[X^{\pm 1}],\quad \e_t P^- = \C[x]\delta_t \subset \C[X^{\pm 1}]
\end{equation}
The modules $\e_c P_c^+$ and $\e_t P^-$ are isomorphic as $\C[x]$-modules via the isomorphism
\[
\varphi:\e_c P_c^+ \to \e_t P^-, \quad \quad \varphi(1_c) = \delta_t 
\]

\begin{lemma}\label{lemma_ppluspminusiso}
 If $t_c = -q^2t^{-1}$, then the map $g: \e_{c} P_c^+ \to \e_t P^-$ is an isomorphism of $B'_q$-modules. This map satisfies $\varphi(p_n(x)) = p^-_n(x)\delta_t$.
\end{lemma}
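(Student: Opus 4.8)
The plan is to verify that $\varphi$ intertwines the $B'_q$-action, using Lemma \ref{lemma_invsubalgebraiso} to reduce the check to the generator $1_c$ of $\e_c P_c^+$ as a $\C[x]$-module. Since $\varphi$ is already a $\C[x]$-module isomorphism by construction, it suffices to check that $\varphi(y \cdot 1_c) = y \cdot \varphi(1_c) = y \cdot \delta_t$ and $\varphi(z \cdot 1_c) = z \cdot \delta_t$. For the left-hand sides, I would use that in the polynomial representation $\e_c P_c^+$ the evaluation-type relations hold: $1_c$ is the image of the constant polynomial $1$, and the operators $Y + Y^{-1}$ and $q^{-1}(XYT^{-2} + X^{-1}Y^{-1})$ act on it in a controlled way. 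Concretely, on $P^+_c$ the element $1$ is (up to the idempotent) a $T$-eigenvector with eigenvalue $t_c$, and a short computation with the Demazure–Lusztig formulas (\ref{du}) gives $(Y+Y^{-1})\cdot 1_c = (t_c q^{-2} + t_c^{-1} q^2) \cdot 1_c$ in $\e_c P_c^+$ — essentially because $1$ is the lowest-weight-type vector on which $Y$ acts by the scalar $t_c q^{-\text{something}}$. Similarly $z$ acts on $1_c$ by a scalar times $x$.

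The substitution $t_c = -q^2 t^{-1}$ is exactly what matches these scalars to the ones recorded in (\ref{unknotaction}): there $y \cdot \delta_t = -(t q^{-2} + t^{-1} q^2)\delta_t$ and $z \cdot \delta_t = -q^{-3} t^{-1} x \delta_t$. Plugging $t_c = -q^2 t^{-1}$ into $t_c q^{-2} + t_c^{-1} q^2$ gives $-t^{-1} - t q^{-2}\cdot\,$... — more carefully, $t_c q^{-2} = -q^2 t^{-1} q^{-2} = -t^{-1}$ and $t_c^{-1} q^2 = -q^{-2} t \cdot q^2 = -t$, wait this needs the correct normalization of the $Y$-eigenvalue; the point is that after the substitution the two scalars agree, and this is the computational heart of the lemma. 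So the key steps are: (1) compute the action of $Y+Y^{-1}$ and of $z$ on the generator $1_c \in \e_c P_c^+$ explicitly using the formulas of Section \ref{dunkl}; (2) compare with (\ref{unknotaction}) and observe that the scalars match precisely when $t_c = -q^2 t^{-1}$; (3) invoke Lemma \ref{lemma_invsubalgebraiso} to conclude $\varphi$ is a $B'_q$-module map, hence (being bijective on the underlying $\C[x]$-modules) a $B'_q$-module isomorphism.

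For the second assertion $\varphi(p_n(x)) = p_n^-(x)\delta_t$, I would argue via the eigenvalue characterization of Macdonald polynomials. By Theorem \ref{thdef}, $p_n(x)$ is the unique symmetric Laurent polynomial with leading term $X^n + X^{-n}$ satisfying $L_{q,t_c}[p_n] = (t_c q^{2n} + t_c^{-1} q^{-2n}) p_n$, where $L_{q,t_c} = Y + Y^{-1}$ acting on $\e_c P_c^+$. Since $\varphi$ is a $B'_q$-module isomorphism (in particular intertwines the action of $y \mapsto Y+Y^{-1}$) and is $\C[x]$-linear (so preserves leading terms up to the $\delta_t$ twist), the element $\varphi(p_n(x)) = q_n(x)\delta_t$ is an eigenvector of the Macdonald operator on $\e_t P^-$ with the same eigenvalue, and $q_n(x)$ has leading term $X^n + X^{-n}$. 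By Remark \ref{rmk_signmacpolys}, the eigenvectors of $Y+Y^{-1}$ on $\e_t P^-$ are exactly $p_m^-(x)\delta_t$; matching eigenvalues, $t_c q^{2n} + t_c^{-1} q^{-2n}$ under $t_c = -q^2 t^{-1}$ becomes (up to sign) the eigenvalue for $p_n^-$, which by Definition \ref{def_signmacpolys} is $p_n(x; q, -t^{-1})$. Uniqueness of the normalized eigenvector then forces $q_n(x) = p_n^-(x)$.

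The main obstacle I anticipate is bookkeeping the normalization: getting the $Y$-eigenvalue of the generator $1_c$ in $\e_c P_c^+$ exactly right (including which power of $q$ appears), and likewise pinning down the precise eigenvalue $(t q^{2n} + t^{-1} q^{-2n})$ versus a sign-twisted version on the sign side, so that the substitution $t_c = -q^2 t^{-1}$ genuinely produces an equality rather than an equality up to an uncontrolled scalar. Once the $n=0$ case (the scalars on the generators) is checked carefully, the eigenvalue argument for general $n$ is forced, so the delicate part is entirely in step (1)–(2) above — the explicit low-level computation with the Demazure–Lusztig operators and the definition of $\delta_t$.
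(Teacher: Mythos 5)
Your proof follows the paper's argument exactly: compute the $y$- and $z$-actions on the generators $1_c$ and $\delta_t$, match the scalars under $t_c=-q^2t^{-1}$, apply Lemma \ref{lemma_invsubalgebraiso}, and deduce the statement about Macdonald polynomials from the eigenbasis property. To resolve the normalization you flagged: the correct eigenvalue is $y\cdot 1_c=(t_c+t_c^{-1})\,1_c$ with no extra powers of $q$ (this is Theorem \ref{thdef}(a) at $n=0$, since $p_0=1$), and then $t_c+t_c^{-1}=-q^2t^{-1}-q^{-2}t=-(tq^{-2}+t^{-1}q^2)$ matches $y\cdot\delta_t$, whereas your tentative $t_cq^{-2}+t_c^{-1}q^2$ would give $-(t+t^{-1})$ and the scalars would not match --- so that one guess does need the correction for the argument to close; with it, everything else in your outline is exactly the paper's proof.
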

\begin{proof}
 A short computation shows that
 \[
  y\cdot 1_c = (t_c+t_c^{-1})1_c,\quad z\cdot 1_c = q^{-1}t^{-1}x1_c
 \]
Another short computation shows that
\[
 y\cdot \delta_t = -(tq^{-2}+t^{-1}q^2)\delta_t,\quad z\cdot \delta_t = -tq^{-3} x\delta_t
\]
If $t_c = -q^2t^{-1}$, then these formulas agree, and Lemma \ref{lemma_invsubalgebraiso} gives the first claim. The second claim follows from the fact that the Macdonald polynomials $p_n(x)$ and $p_n^-(x)$ are an eigenbasis for the operator $y$ for the modules $\e_c P_c^+$ and $\e_t P^-$, respectively.
\end{proof}

Finally, we relate the evaluation maps (\ref{cheredniksevaluation}) and (\ref{ourevaluation}) that are used to define the polynomials $P_{n,r,s}$ and $J_{n,r,s}$.
\begin{lemma}\label{lemma_compareevaluations}
 If $t_c = -q^2t^{-1}$, then for all $h(x) \in \e_c P^+_c$ we have
 \[
  \epsilon_c(h(x)) = \epsilon(\varphi(h(x))
 \]
\end{lemma}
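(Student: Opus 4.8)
The plan is to unwind the two evaluation maps into explicit polynomial substitutions and observe that the substitution points coincide when $t_c = -q^2 t^{-1}$. First I would recall the three ingredients already available: by \eqref{cheredniksevaluation}, under the identification $\e_c P_c^+ = \C[x]1_c$ we have $\epsilon_c(h(x)\cdot 1_c) = h(t_c + t_c^{-1})$; by the lemma preceding Definition of $J_{n,r,s}$, under the identification $\e_t P^- = \C[x]\delta_t$ we have $\epsilon(h(x)\delta_t) = h(-tq^{-2} - t^{-1}q^2)$; and $\varphi$ is $\C[x]$-linear with $\varphi(1_c) = \delta_t$, hence $\varphi(h(x)\cdot 1_c) = h(x)\delta_t$ for every $h \in \C[x]$.

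Combining these, $\epsilon(\varphi(h(x))) = \epsilon(h(x)\delta_t) = h(-tq^{-2} - t^{-1}q^2)$, while $\epsilon_c(h(x)) = h(t_c + t_c^{-1})$. So the whole claim reduces to the scalar identity $t_c + t_c^{-1} = -tq^{-2} - t^{-1}q^2$ under the hypothesis $t_c = -q^2 t^{-1}$. I would verify this directly: $t_c + t_c^{-1} = -q^2 t^{-1} + (-q^2 t^{-1})^{-1} = -q^2 t^{-1} - q^{-2}t = -(tq^{-2} + t^{-1}q^2)$, as required. Since $h \in \C[x]$ was arbitrary, this proves $\epsilon_c(h(x)) = \epsilon(\varphi(h(x)))$ for all $h(x) \in \e_c P_c^+$.

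There is essentially no obstacle here: the statement is a bookkeeping check once the formulas for $\epsilon_c$, $\epsilon$, and $\varphi$ are in place. The only point requiring a small amount of care is to make sure the identification of $\e_c P_c^+$ with $\C[x]$ (via $h(x) \leftrightarrow h(x)\cdot 1_c$) and of $\e_t P^-$ with $\C[x]$ (via $h(x) \leftrightarrow h(x)\delta_t$) are used consistently on both sides of $\varphi$, which is exactly the content of $\varphi(1_c) = \delta_t$ together with $\C[x]$-linearity. Given that, the proof is a two-line computation.
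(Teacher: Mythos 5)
Your proposal is correct and follows exactly the paper's argument: both evaluations are polynomial substitutions ($\epsilon_c$ at $t_c+t_c^{-1}$ via (\ref{cheredniksevaluation}), $\epsilon$ at $-tq^{-2}-t^{-1}q^2$ via (\ref{ourevaluation})), and these agree under $t_c=-q^2t^{-1}$. The only difference is that you write out the scalar check $t_c+t_c^{-1}=-(tq^{-2}+t^{-1}q^2)$ and the role of $\varphi(1_c)=\delta_t$ explicitly, which the paper leaves implicit.
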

\begin{proof}
 From (\ref{cheredniksevaluation}), we have $\epsilon_c(h(x)) = h(t_c+t_c^{-1})$. When $t_c = -q^2t^{-1}$ this agree with the equality $\epsilon(h(x)) = h(-tq^{-2}-t^{-1}q^2)$ from (\ref{ourevaluation}).
\end{proof}

We have now collected the facts needed to prove Theorem \ref{thm_cherednikpolyistoppoly_proved}. We first recall the equalities claimed in the theorem:
\begin{align*}
P_{n,r,s}(q,t_c = -q^2t^{-1}) &= J_{n,r,s}(q,t)  \\
J_n(K_{r,s}; q) &= P_{n,r,s}(q,t_c=-q^2) = J_{n,r,s}(q, t=1)
 \end{align*}
\begin{proof}(of Theorem \ref{thm_cherednikpolyistoppoly_proved})

We recall that the polynomials $P_{n,r,s}(q,t_c)$ and $J_{n,r,s}(q,t)$ are defined as follows:
\begin{align*}
 P_{n,r,s}(q,t_c) := (-q)^{rs(n^2-1)}\epsilon_c(\gamma_{r,s}(p_{n-1}(y))\cdot 1_c)\\
 J_{n,r,s}(q,t) := (-q)^{rs(n^2-1)}\epsilon(\gamma_{r,s}(p^-_{n-1}(y))\cdot \varnothing))
\end{align*}
The empty link is denoted $\varnothing$, and under the identification $K_{q,t}(K) = \e_t P^-$ we have $\varnothing \mapsto \delta_t$. By Lemma \ref{lemma_ppluspminusiso}, we see that $\varphi(y\cdot1_c) = y\cdot \varphi(1_c) = y\cdot \delta_t$. Then Lemma \ref{lemma_sl2actiononB} shows that $\varphi(\gamma_{r,s}(p_n(y))\cdot 1_c) = \gamma_{r,s}(p_n(y))\cdot \delta_t$. Finally, Lemma \ref{lemma_compareevaluations} completes the proof of the first equality.

To complete the proof of the theorem, we first note that when $t=1$, the module $K_{q,t=1}(K)$ and the pairing $\langle -,-\rangle_t$ are the same as the classical skein module $K_q(K)$ and the classical topological pairing. (These claims are Lemma \ref{lemma_teq1isskeinmodule} and \ref{lemma_teq1ispairing}, respectively.) The second equality in the statement of the theorem then follows from the cabling formula in Corollary \ref{corollary_topcablingformula}.
\end{proof}

\section{Iterated cablings of the unknot}\label{sec_iteratedcables}
The key facts that allowed the comparison of Cherednik's polynomials to colored Jones polynomials of torus knots are that the colored Jones polynomials satisfy a cabling formula and that torus knots are cables of the unknot. However, the cabling formula applies to all knots, and in particular can be used to write colored Jones polynomials of iterated cables of the unknot in terms of colored Jones polynomials of the unknot. In this section we use this observation to extend Cherednik's construction.

More precisely, let $\rr = (r_1,\ldots,r_m)$ and $\sss = (s_1,\ldots, s_m)$ with $r_i,s_i \in \Z$ relatively prime, and write $\rr_{k} = (r_1,\ldots,r_k)$ and similarly for $\sss_k$. 
\begin{definition}
Let $K^\tp(\rr_1,\sss_1)$ be the 0-framed $(r_1,s_1)$ torus knot, and let $K^\tp(\rr_m,\sss_m)$ be the $(r_m,s_m)$ topological cable of the knot $K^\tp(\rr_{m-1},\sss_{m-1})$. (See Definition \ref{def_topcable} - in particular, each $K^\tp(\rr_{k},\sss_k)$ is 0-framed.)
\end{definition}
Below we will define 2-variable polynomials $J_n(\rr,\sss; q,t) \in \C[q^{\pm 1},t^{\pm 1}]$, and we will show that they specialize to the colored Jones polynomials of the knot $K^\tp(\rr,\sss)$:
\begin{equation}
 J_n(\rr,\sss; q,t=-q^2) = J_n(K^\tp(\rr,\sss); q)
\end{equation}
When $m = 1$, this construction reproduces Cherednik's construction in \cite{Che13}. 

\begin{remark}
As we learned in the last section, to produce Jones polynomials using the DAHA we can either use the sign representation and specialize to $t=1$, or we can use the standard polynomial representation and specialize to $t=-q^2$. In this section we will make the latter choice (since for higher rank DAHAs this specialization is more natural).
\end{remark}

Before proceeding, we give an extension of Corollary \ref{corollary_topcablingformula} to iterated cables of the 0-framed unknot. 

\begin{remark} \label{remark_ueqy}
In Definition \ref{def_topneighborhoods} the identification of the skein modules of the neighborhoods $N_K$ and $N_{r,s}$ of a knot $K$ and its cable $K_{r,s}$ are the same since they are both 0-framed. We may therefore view the map $\Gamma_{r,s}^\tp$ of equation (\ref{def_gammatop}) as a map $\Gamma_{r,s}^\tp: \C[y] \to \C[y]$ by identifying $u = y$.
\end{remark}
\begin{corollary}\label{corollary_topiteratedcablingformula}
Given sequences $\rr$, $\sss$ as above, let $\Gamma^\tp_{\rr,\sss} = \Gamma^\tp_{r_1,s_1} \circ \cdots \circ \Gamma^\tp_{r_m,s_m}: \C[y] \to \C[y]$, where $\Gamma^\tp_{r,s}$ is defined in equation (\ref{def_gammatop}). Then we have the equality
\[
J_n(K^\tp(\rr,\sss)) = \langle \varnothing \cdot \Gamma^\tp_{\rr,\sss}(S_{n-1}(y)),\varnothing\rangle_{unknot}
\]
\end{corollary}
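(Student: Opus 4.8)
The plan is to reduce the statement to the single-step cabling formula of Corollary \ref{corollary_topcablingformula} by an induction on $m$, taking care that at each stage the knot being cabled is $0$-framed so that the identifications of skein modules used in Definition \ref{def_topneighborhoods} are consistent. The base case $m=1$ is exactly Corollary \ref{corollary_topcablingformula} applied to the $0$-framed unknot, after identifying $u=y$ as in Remark \ref{remark_ueqy}; note that $K^\tp(\rr_1,\sss_1)$ is by definition the $0$-framed $(r_1,s_1)$-torus knot, i.e. the topological cable of the $0$-framed unknot, so both sides agree on the nose.

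For the inductive step, suppose the formula holds for $(\rr_{m-1},\sss_{m-1})$. First I would record the following functoriality statement: for a $0$-framed knot $K$ with $0$-framed cable $K^\tp_{r,s}$, the tautological inclusion $N_{r,s}\hookrightarrow N_K \hookrightarrow S^3\setminus K' \hookrightarrow S^3$ (where $K'$ is whatever further complement $K$ sits inside) factors the evaluation, so that
\[
\langle \varnothing \cdot \Gamma^\tp_{r,s}(S_{n-1}(y)), \varnothing\rangle_{K} = J_n(K^\tp_{r,s};q)
\]
when $K'$ is empty, and more generally the composite of inclusions lets one push the computation of $J_n$ of the outermost cable inward one layer at a time. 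Concretely: since $K^\tp(\rr_m,\sss_m)$ is the $0$-framed $(r_m,s_m)$-topological cable of the $0$-framed knot $K^\tp(\rr_{m-1},\sss_{m-1})$, Corollary \ref{corollary_topcablingformula} (with $K = K^\tp(\rr_{m-1},\sss_{m-1})$) gives
\[
J_n(K^\tp(\rr_m,\sss_m);q) = \langle \varnothing \cdot \Gamma^\tp_{r_m,s_m}(S_{n-1}(y)), \varnothing\rangle_{K^\tp(\rr_{m-1},\sss_{m-1})}.
\]
Here I am using Remark \ref{remark_ueqy} to regard $\Gamma^\tp_{r_m,s_m}$ as an endomorphism of $\C[y]$, which is legitimate because $K^\tp(\rr_{m-1},\sss_{m-1})$ is $0$-framed so $K_q(N_K)\cong K_q(N_{r_m,s_m})\cong \C[y]$ compatibly.

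Now the element $\Gamma^\tp_{r_m,s_m}(S_{n-1}(y)) \in K_q(N_{K^\tp(\rr_{m-1},\sss_{m-1})})$ is itself a polynomial $P(y)$ in $y$; expanding $P(y)$ in the Chebyshev basis $P(y) = \sum_j c_j S_{j-1}(y)$ and using $\C$-bilinearity of the pairing together with Theorem \ref{thm_coloredjonespolys} (which says $\langle \varnothing\cdot S_{j-1}(y),\varnothing\rangle = J_j$), the inductive hypothesis applies term by term: the composite of inclusions $N_{K^\tp(\rr_{m-1},\sss_{m-1})}\hookrightarrow S^3\setminus\text{(unknot)} \hookrightarrow S^3$ identifies $\langle \varnothing\cdot P(y),\varnothing\rangle_{K^\tp(\rr_{m-1},\sss_{m-1})}$ with $\langle \varnothing\cdot \Gamma^\tp_{\rr_{m-1},\sss_{m-1}}(P(y)),\varnothing\rangle_{\text{unknot}}$, since $\Gamma^\tp_{\rr_{m-1},\sss_{m-1}}$ is exactly the map on $\C[y]$ induced by that composite of inclusions. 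Substituting $P(y) = \Gamma^\tp_{r_m,s_m}(S_{n-1}(y))$ and using $\Gamma^\tp_{\rr_m,\sss_m} = \Gamma^\tp_{\rr_{m-1},\sss_{m-1}}\circ\Gamma^\tp_{r_m,s_m}$ yields the claim.

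The main obstacle — really the only subtle point — is bookkeeping of framings: Corollary \ref{corollary_topcablingformula} and Definition \ref{def_topcable} require the knot being cabled to be $0$-framed, and the definition of $K^\tp(\rr_k,\sss_k)$ is set up precisely so that each iterate is $0$-framed, so this is consistent, but one must verify that the map $\Gamma^\tp_{r,s}$ (which a priori goes $\C[u]\to\C[y]$) really does descend to a well-defined endomorphism of $\C[y]$ after the identification $u=y$, and that these endomorphisms compose correctly under nested inclusions. This is where the $(-q)^{rs(n^2-1)}$ twist factor in Lemma \ref{lemma_topskeincable} is silently absorbed into $\Gamma^\tp_{r,s}$, and one should check it does not cause inconsistencies when iterated; since $S_{j-1}(y)$ for varying $j$ all receive the twist $(-q)^{rs(j^2-1)}$ depending on $j$, the map $\Gamma^\tp_{r,s}$ is not simply scalar multiplication, but it is still $\C$-linear and well-defined on the Chebyshev basis, which is all that the above argument uses.
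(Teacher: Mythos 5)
Your proposal is correct and follows essentially the same route as the paper, whose proof consists of the one line ``this follows from the proof of Corollary \ref{corollary_topcablingformula} combined with a simple induction''; you have simply written out that induction, with the base case and inductive step both resting on Corollary \ref{corollary_topcablingformula}, functoriality of the nested inclusions of tubular neighborhoods, and the $\C$-linearity of $\Gamma^\tp_{r,s}$ on the Chebyshev basis. Your attention to the fact that every iterate $K^\tp(\rr_k,\sss_k)$ is $0$-framed (so that Remark \ref{remark_ueqy} applies at each stage and the framing twists $(-q)^{rs(j^2-1)}$ are correctly absorbed into the linear maps $\Gamma^\tp_{r,s}$) is exactly the bookkeeping the paper leaves implicit.
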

\begin{proof}
This follows from the proof of Corollary \ref{corollary_topcablingformula} combined with a simple induction.
\end{proof}

\subsection{Two variable polynomials for iterated cables}

Let $\phi:\H_{q,t} \to \H_{q,t}$ be the anti-automorphism from (\ref{eq_antiautomorphism}). Since $\phi(T) = T$, this anti-automorphism restricts to the spherical subalgebra $\SH_{q,t}$. In terms of the generators $x,y,z \in \SH_{q,t}$ from Theorem \ref{sphericalrelations}, we have
\[
 \phi(x) = y,\quad \phi(y) = x,\quad \phi(z) = z
\]
Let $P$ be the polynomial representation of $\H_{q,t}$ defined in Section \ref{dunkl} twisted by the anti-automorphism $\phi$, and let $P\e$ be its symmetrization. Explicitly, we may identify $P\e = \C[y]$, where the action of $\SH_{q,t}$ on $P\e$ is determined by the formulas
\begin{equation}
 f(y)\cdot y = f(y)y,\quad 1\cdot x = (t+t^{-1}),\quad 1\cdot z = q^{-1}t^{-1}y
\end{equation}
Recall that $\SL_2(\Z)$ acts on $\SH_{q,t}$ via (\ref{cherednikssl2zaction}). 
\begin{definition}
Given $r,s \in \Z$ relatively prime, let $\gamma_{r,s} \in \SL_2(\Z)$ be such that 
\[\gamma_{r,s}\left(\begin{array}{c} 0\\1\end{array}\right) = \left(\begin{array}{c} r\\s\end{array}\right)\] 
\end{definition}
\begin{lemma}\label{lemma_dependonrs}
 The element $\gamma_{r,s}(y) \in \SH_{q,t}$ depends only on $r,s$ and not on the choice of $\gamma_{r,s}$.
\end{lemma}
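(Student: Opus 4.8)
The plan is to reduce the independence statement to the stabilizer of the vector $(0,1)$ in $\SL_2(\Z)$ and then observe that this stabilizer acts trivially on $y$. Suppose $\gamma_{r,s}$ and $\gamma'_{r,s}$ are two elements of $\SL_2(\Z)$ with $\gamma_{r,s}\binom 01 = \binom rs = \gamma'_{r,s}\binom 01$. Set $\sigma := \gamma_{r,s}^{-1}\gamma'_{r,s}$, so that $\sigma\binom 01 = \binom 01$ and $\gamma'_{r,s} = \gamma_{r,s}\sigma$. Since the assignment (\ref{cherednikssl2zaction}) defines a genuine action of $\SL_2(\Z)$ on $\SH_{q,t}$ (a group homomorphism; this is the content of the routine check recorded in Lemma \ref{lemma_sl2actiononB}), we have $\gamma'_{r,s}(y) = \gamma_{r,s}\bigl(\sigma(y)\bigr)$. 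Hence it suffices to show that $\sigma(y) = y$ for every $\sigma \in \SL_2(\Z)$ fixing $\binom 01$.

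Next I would identify this stabilizer explicitly. Writing $\sigma = \left[\begin{smallmatrix} a & b \\ c & d\end{smallmatrix}\right]$, the equation $\sigma\binom 01 = \binom bd = \binom 01$ forces $b = 0$ and $d = 1$; then $\det\sigma = ad - bc = a = 1$, so $\sigma = \left[\begin{smallmatrix} 1 & 0 \\ c & 1\end{smallmatrix}\right] = \left[\begin{smallmatrix} 1 & 0 \\ 1 & 1\end{smallmatrix}\right]^{c}$ for some $c \in \Z$. Thus the stabilizer of $\binom 01$ is the infinite cyclic group generated by $\left[\begin{smallmatrix} 1 & 0 \\ 1 & 1\end{smallmatrix}\right]$, which by (\ref{cherednikssl2zaction}) acts on $\H_{q,t}$ (and hence on $\SH_{q,t}$) as $\tau^-$.

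Finally, I would conclude by noting that $\tau^-$ fixes $y$. From (\ref{cherednikssl2zaction}) we have $\tau^-(Y) = Y$, so $\tau^-$ fixes $Y + Y^{-1}$ and therefore fixes $y = (Y+Y^{-1})\e \in \SH_{q,t}$; equivalently, in the $B'_q$-presentation of Theorem \ref{sphericalrelations} this is exactly the identity $\tau^-(y) = y$ of (\ref{sl2actiononB}). Consequently $\sigma(y) = (\tau^-)^c(y) = y$ for every $c \in \Z$, and hence $\gamma'_{r,s}(y) = \gamma_{r,s}(y)$, as claimed. I do not anticipate any real obstacle here: the only point requiring care is that (\ref{cherednikssl2zaction}) genuinely defines an $\SL_2(\Z)$-action rather than merely a pair of commuting-up-to-relations automorphisms, and this is already covered by Lemma \ref{lemma_sl2actiononB}.
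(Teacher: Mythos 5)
Your proposal is correct and is essentially the paper's own argument: the paper likewise observes that the stabilizer of $(0,1)^T$ in $\SL_2(\Z)$ is the cyclic subgroup generated by $\tau^-$, and that $\tau^-(Y)=Y$ forces $\tau^-(y)=y$. You have merely spelled out the reduction to the stabilizer and the explicit matrix computation, which the paper leaves implicit.
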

\begin{proof}
The stabilizer of $(0,1)^T$ in $\SL_2(Z)$ is the subgroup generated by $\tau^-$ (see formula (\ref{cherednikssl2zaction})). Since $\tau^-(Y) = Y$, we have $\tau^-(y) = y$, which proves the claim.
\end{proof}

We let $p_n(y) \in P\e$ be the \emph{standard} Macdonald polynomials defined in Theorem \ref{thdef}. To define our polynomials we will need the map $\iota_{q,t}$ of right $\C[Y+Y^{-1}]$-modules and the evaluation map $\epsilon_{q,t}:P\e \to \C$:
\begin{equation}\label{eq_iotaqt}
\iota_{q,t}: P\e \to \SH_{q,t},\quad \quad \iota_{q,t}(f(y)) = f(y), \quad \quad \epsilon_{q,t}(v) = \langle v \mid 1\rangle_{q,t}
\end{equation}
where the pairing defining $\epsilon_{q,t}$ is the one from Corollary \ref{cor_pairings}.

\begin{definition}\label{def_iteratedcable}
 Let $r,s \in \Z$ be relatively prime. Define a $\C$-linear map $\Gamma^\tp_{r,s;q,t}:P\e \to P\e$ by
\[
 \Gamma^\tp_{r,s;q,t}(p_{n-1}(y)) := (-q)^{rs(n^2-1)}1 \cdot \gamma_{r,s}(\iota_{q,t}(p_{n-1}(y)))
\]
Given sequences $\rr = (r_1,\cdots,r_m)$ and $\sss = (s_1,\cdots,s_m)$ with $r_i,s_i \in \Z$ relatively prime, define
\[
\Gamma^\tp_{\rr,\sss;q,t} := \Gamma^\tp_{r_1,s_1;q,t}\circ \cdots \circ \Gamma^\tp_{r_m,s_m;q,t},\quad \quad  J_n(\rr,\sss;q,t) := \epsilon_{q,t}(\Gamma^\tp_{\rr,\sss;q,t}(p_{n-1}(y)))
\]

\end{definition}

\begin{theorem}\label{thm_topiteratedcablespecialization}
We have the equality 
\[
 J_n(\rr,\sss;q,t=-q^2) = J_n(K(\rr,\sss); q)
\]
\end{theorem}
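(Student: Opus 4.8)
The plan is to specialize $t=-q^2$ and identify the construction of Definition~\ref{def_iteratedcable} with the topological iterated cabling formula of Corollary~\ref{corollary_topiteratedcablingformula}. By that corollary, $J_n(K^\tp(\rr,\sss);q)=\langle\varnothing\cdot\Gamma^\tp_{\rr,\sss}(S_{n-1}(y)),\varnothing\rangle_{\mathrm{unknot}}$, where $\Gamma^\tp_{\rr,\sss}=\Gamma^\tp_{r_1,s_1}\circ\cdots\circ\Gamma^\tp_{r_m,s_m}$ is a composite of single-step cabling operators on $K_q(N_K)=\C[y]$ with $K$ the unknot (see Remark~\ref{remark_ueqy}), while by definition $J_n(\rr,\sss;q,-q^2)=\epsilon_{q,-q^2}(\Gamma^\tp_{\rr,\sss;q,-q^2}(p_{n-1}(y)))$, again a composite of single-step operators, now on $P\e=\C[y]$. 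So I would reduce the theorem to building a dictionary at $t=-q^2$ under which $\SH_{q,-q^2}$ becomes $K_q(T^2)$, $P\e$ becomes $K_q(N_K)$ ($K$ the unknot), $p_{n-1}(y)$ becomes $S_{n-1}(y)$, $\epsilon_{q,-q^2}$ becomes $\langle\varnothing\cdot(-),\varnothing\rangle_{\mathrm{unknot}}$, and each $\Gamma^\tp_{r,s;q,-q^2}$ becomes $\Gamma^\tp_{r,s}$; composing the single-step identifications and invoking Corollary~\ref{corollary_topiteratedcablingformula} then gives the result.

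\noindent\textbf{Building the dictionary.} I would first note that the value $(t/q-q/t)^2+(q+1/q)^2$ of the central element in the presentation of Theorem~\ref{sphericalrelations} is invariant under $t\mapsto-q^2$, so $B_{q,-q^2}=B_{q,1}$ and hence there is an algebra isomorphism $\SH_{q,-q^2}\cong\SH_{q,1}\cong A_q^{\Z_2}\cong K_q(T^2)$ (the last two via Section~\ref{sph} and Theorem~\ref{fg00}), sending $x,y$ to the meridian and longitude of $T^2$ and $z$ to the curve $z'$ of Theorem~\ref{thm_bp}. This isomorphism is $\SL_2(\Z)$-equivariant because the action on $\SH_{q,t}$ is given by the $t$-independent formulas~\eqref{sl2actiononB} on $B'_q$, which by Lemma~\ref{lemma_sl2actiononB} realize the mapping class group action on $K_q(T^2\setminus p)$. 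Next I would observe that at $t=-q^2$ the module $P\e=\C[y]$ has $1\cdot x=t+t^{-1}=-(q^2+q^{-2})$, $1\cdot z=q^{-1}t^{-1}y=-q^{-3}y$, and $y$ acting freely; by Lemma~\ref{lemma_invsubalgebraiso} this data identifies $P\e$ at $t=-q^2$ with the $K_q(T^2)$-module $K_q(N_K)=\C[y]$ for $K$ the unknot (the meridian bounds a disc in the solid torus $N_K$ and acts by the trivial-loop scalar $-(q^2+q^{-2})$, the longitude is isotopic to the core and acts freely), with generator $1\leftrightarrow\varnothing$, and correspondingly $\epsilon_{q,-q^2}=\langle-\mid 1\rangle_{q,-q^2}$ becomes the topological pairing; this is the $\phi$-twisted, $t=-q^2$ analogue of the $t=1$ statements in Lemmas~\ref{lemma_teq1isskeinmodule} and~\ref{lemma_teq1ispairing}. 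Finally I would check $p_{n-1}(x;q,-q^2)=S_{n-1}(x)$: a direct computation with the Macdonald operator~\eqref{mac} gives $L_{q,-q^2}[S_{n-1}(X+X^{-1})]=-(q^{2n}+q^{-2n})\,S_{n-1}(X+X^{-1})$, which is the eigenvalue of $p_{n-1}$ at $t=-q^2$ from Theorem~\ref{thdef}, and $S_{n-1}$ has the normalization $X^{n-1}+X^{-(n-1)}+(\text{lower order})$ required there, so the two agree by uniqueness.

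\noindent\textbf{Matching the cabling maps and concluding.} Comparing $\Gamma^\tp_{r,s;q,t}(p_{n-1}(y))=(-q)^{rs(n^2-1)}\,1\cdot\gamma_{r,s}(\iota_{q,t}(p_{n-1}(y)))$ from Definition~\ref{def_iteratedcable} with $\Gamma^\tp_{r,s}(S_{n-1}(y))=(-q)^{rs(n^2-1)}\,1\cdot\gamma_{r,s}(S_{n-1}(y))$ from Lemma~\ref{lemma_topskeincable} and Remark~\ref{remark_ueqy}, I would check that the two coincide term by term under the dictionary: the prefactors agree; $\iota_{q,t}(p_{n-1}(y))=p_{n-1}(y)\in\SH_{q,-q^2}$ corresponds to $S_{n-1}(y)\in K_q(T^2)$; $\gamma_{r,s}$ acting on $\SH_{q,-q^2}$ corresponds to the mapping class group action on $K_q(T^2)$; and the left action $1\cdot(-)$ of $\SH_{q,-q^2}$ on $P\e$ corresponds to the map $\mu$ pushing a boundary curve into the solid torus $K_q(N_K)$. (The framing factor $(-q)^{rs(n^2-1)}$ occurs at each step because every $K^\tp(\rr_k,\sss_k)$ is $0$-framed, matching Definition~\ref{def_iteratedcable}.) Since $\Gamma^\tp_{\rr,\sss;q,-q^2}$ and $\Gamma^\tp_{\rr,\sss}$ are the same composite of $m$ such single-step operators, defined on the bases $\{p_j(y)\}$ and $\{S_j(y)\}$ respectively, which coincide at $t=-q^2$, the composites correspond under the dictionary; applying $\epsilon_{q,-q^2}\leftrightarrow\langle\varnothing\cdot(-),\varnothing\rangle_{\mathrm{unknot}}$ and Corollary~\ref{corollary_topiteratedcablingformula} then yields $J_n(\rr,\sss;q,-q^2)=\langle\varnothing\cdot\Gamma^\tp_{\rr,\sss}(S_{n-1}(y)),\varnothing\rangle_{\mathrm{unknot}}=J_n(K^\tp(\rr,\sss);q)$.

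\noindent\textbf{Expected main obstacle.} The real work is the middle paragraph: making precise the identification of $P\e$ at $t=-q^2$ (including the $\phi$-twist) with the genuine skein module $K_q(N_K)$ and checking compatibility with the $\SL_2(\Z)$-action, the Macdonald/Chebyshev bases, and the pairing. For $m=1$ this is essentially the content of Theorem~\ref{thm_cherednikpolyistoppoly_proved} together with Lemmas~\ref{lemma_teq1isskeinmodule}--\ref{lemma_teq1ispairing}, so the genuinely new input should only be the simple but essential observation that $\Gamma^\tp_{\rr,\sss;q,t}$ and $\Gamma^\tp_{\rr,\sss}$ are literally the same composite of single-step cabling operators, which reduces the iterated case to that $m=1$ dictionary plus a formal induction. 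One technical point to watch: the algebraic composite implicitly re-expands each intermediate output in the Macdonald basis, whereas the topological composite re-expands in the $\{S_j\}$ basis --- this is harmless precisely because $p_j|_{t=-q^2}=S_j$.
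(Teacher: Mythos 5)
Your proposal is correct and follows essentially the same route as the paper: the paper's proof likewise builds the dictionary $\SH_{q,-q^2}\cong\SH_{q,1}\cong K_q(T^2)$ (via Theorem \ref{sphericalrelations} and Lemma \ref{lemma_sl2actiononB}), identifies $P\e$ at $t=-q^2$ with $K_q(N_K)$ for the unknot (Lemma \ref{lemma_ppluspminusiso}), notes that $p_n$ specializes to $S_n$ and that $\epsilon_{q,-q^2}$ becomes the topological pairing, concludes $\Gamma^\tp_{r,s;q,-q^2}=\Gamma^\tp_{r,s}$ step by step, and invokes Corollary \ref{corollary_topiteratedcablingformula}. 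You simply spell out directly (the central-element computation, the Macdonald-operator eigenvalue check, the module action formulas) what the paper delegates to its earlier lemmas.
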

\begin{proof}
By Theorem \ref{sphericalrelations}, the algebra $\SH_{q,t=1}$ and $\SH_{q,t=-q^2}$ are isomorphic, and by Lemma \ref{lemma_sl2actiononB} this isomorphism is $\SL_2(\Z)$-equivarient. By Lemma \ref{lemma_ppluspminusiso} the right $K_q(T^2)$-module $K_q(N_K)$ is isomorphic to the right $\SH_{q,t=-q^2}$-module $P\e$. The inclusion maps $\iota_{q,t=-q^2}$ from (\ref{eq_iotaqt}) and $\iota$ from (\ref{eq_iotatop}) agree, and when $t=-q^2$ the Macdonald polynomials $p_n(y)$ specialize to the Chebyshev polynomials $S_n(y)$. Therefore, the linear map $\Gamma_{r,s;q,t=-q^2}^\tp$ is equal to the linear map $\Gamma_{r,s}^\tp$ of (\ref{def_gammatop}), which implies that $\Gamma_{\rr,\sss;q,t=-q^2}^\tp = \Gamma_{\rr,\sss}^\tp$. Finally, $\epsilon_{q,t=-q^2}(a) = \langle a,\varnothing\rangle_{unknot}$ by Lemmas \ref{lemma_teq1ispairing} and \ref{lemma_compareevaluations}. This means that when $t=-q^2$, the formula defining $J_n(\rr,\sss;q,t)$ agrees exactly with the cabling formula for $J_n(K^\tp(\rr,\sss))$ in Corollary \ref{corollary_topiteratedcablingformula}. This completes the proof of the theorem.
\end{proof}

\subsection{Examples}
In this section we include example calculations of the polynomials constructed in the previous section. We first consider the case of the knot $Kp := K(\rr,\sss)$ where $\rr = (2,2)$ and $\sss = (3,5)$, which is the $(2,5)$ cable of the $(2,3)$ knot (i.e. the trefoil). (We choose this example because it is the simplest example in which the ordering of the $\Gamma_{r_i,s_i}$ matters.) To produce this knot and calculate its (normalized) colored Jones polynomial in \texttt{Mathematica} using the \texttt{KnotTheory} package, one inputs

\begin{figure}
\includegraphics{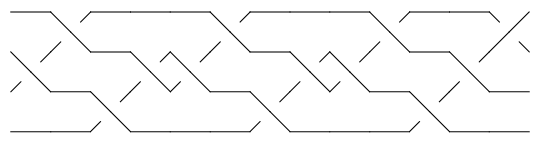}
\includegraphics{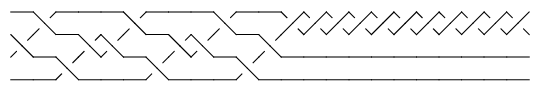}
\caption{The $(2,5)$ and $(2,-5)$ cables of the $(2,3)$ knot}\label{fig_25of23}
\end{figure}

\begin{equation}\label{eq_jones25of23}
\begin{array}{l}
\textbf{Kp} = \textbf{BR}[4,\{2,1,3,2,2,1,3,2,2,1,3,2,-1\}]\\
\textbf{ColouredJones}[\textbf{Kp},1][q^4](q^4-q^{-4})/(q^2-q^{-2})\\
q^{14}+q^{18}+q^{22}+q^{26}-q^{42}-q^{46}-q^{50}+q^{58}
\end{array}
\end{equation}
The output of this is the first image in Figure \ref{fig_25of23} along with the polynomial displayed in (\ref{eq_jones25of23}). Similarly, for the $(2,-5)$ cable $Km$ of the trefoil, the following produces the second image of Figure \ref{fig_25of23}:

\begin{equation}\label{eq_jones2m5of23}
\begin{array}{l}
\textbf{Km} = \textbf{BR}[4,\{ 2,1,3,2,2,1,3,2,2,1,3,2, -1,-1,-1,-1,-1,-1,-1,-1,-1,-1,-1\}]\\
\textbf{ColouredJones}[\textbf{Km},1][q^4](q^4-q^{-4})/(q^2-q^{-2})\\
-q^{-30}+q^{-6}+q^{-2}+q^2+q^6+q^{10}-q^{22}-q^{26}-q^{30}+q^{38}
\end{array}
\end{equation}

\begin{remark}
 In the definition of the $(r,s)$ cable of a knot $K$, one identifies the standard torus $S^1\times S^1$ with the boundary of a neighborhood $N_K$ of $K$. This identification is done using Lemma \ref{lemma_meridianlongitude} - in other words, the longitude of the boundary torus of $N_K$ must have $0$ linking number with the knot $K$. This explains why the first image in Figure \ref{fig_25of23} has $1$ `negative' crossing at the right, instead of $5$ `positive' crossings (since $6$ negative crossings must be added to correct for the framing/linking). Similarly, the second image of Figure \ref{fig_25of23} has $11$ negative crossings instead of $5$.
\end{remark}

We have also implemented code to compute the polynomials of Definition \ref{def_iteratedcable} in \texttt{Mathematica} using the \texttt{NCAlgebra} package. Running this code produces the following polynomials:

\begin{eqnarray*}
  (1 - q^4 t^2)J_2(Kp; q,t) &=& 
  q^{32} \left(\frac{1}{t}-t^3\right)+
  q^{44} \left(-\frac{1}{t^{15}}-\frac{1}{t^{13}}\right)+
  q^{48} \left(\frac{1}{t^{11}}+\frac{1}{t^9}\right)+
  q^{52} \left(-\frac{1}{t^{13}}+\frac{1}{t^9}\right)+\\
  &{ }& q^{56} \left(-\frac{1}{t^{13}}+\frac{2}{t^9}-\frac{1}{t^5}\right)+
  q^{60} \left(-\frac{1}{t^{11}}+\frac{1}{t^9}+\frac{1}{t^7}-\frac{1}{t^5}\right)+\\
  &{ }& q^{64} \left(\frac{1}{t^9}+\frac{1}{t^7}-\frac{1}{t^5}-\frac{1}{t^3}\right)+
  q^{68} \left(-\frac{1}{t^5}+\frac{1}{t}\right)
\\ 
  (1 - q^4 t^2)J_2(Km; q,t) &=& 
  q^{-28}\left(\frac{1}{t}-t^3\right) +
  q^4 \left(-\frac{1}{t^5}-\frac{1}{t^3}\right)+
  q^8 \left(\frac{1}{t}+t\right)+
  q^{12} \left(-\frac{1}{t^3}+t\right)+\\
  &{ }& q^{16} \left(-\frac{1}{t^3}+2 t-t^5\right)+
  q^{20} \left(-\frac{1}{t}+t+t^3-t^5\right)+\\
  &{ }& q^{24} \left(t+t^3-t^5-t^7\right)+
  q^{28} \left(-t^5+t^9\right)
\end{eqnarray*}

For completeness, we also include the following polynomials for the trefoil $K_{2,3}$:
\begin{eqnarray*}
 J_2(K_{2,3};q,t) &=& q^{12} \left(\frac{1}{t^5}+\frac{1}{t^3}\right)+q^{16} \left(\frac{1}{t^3}-t\right)\\
 (1 - q^4 t^2)J_3(K_{2,3};q,t) &=& q^{24} \left(-\frac{1}{t^{10}}-\frac{1}{t^8}\right)+q^{32} \left(-\frac{1}{t^8}+\frac{1}{t^4}\right)+q^{28} \left(\frac{1}{t^6}+\frac{1}{t^4}\right)+\\
& { } & q^{36}
\left(-1-\frac{1}{t^8}+\frac{2}{t^4}\right)+
+q^{40} \left(-1-\frac{1}{t^6}+\frac{1}{t^4}+\frac{1}{t^2}\right)+\\
&{ }& q^{44} \left(-1+\frac{1}{t^4}+\frac{1}{t^2}-t^2\right)+q^{48}
\left(-1+t^4\right)
\end{eqnarray*}

\appendix
\section{Comparing polynomials for iterated cables}\label{sec_Citeratedcables}

After the first version of the present paper appeared, Cherednik and Danilenko gave a construction of certain polynomials for general $\g$ in \cite{CD14}. 
For $\g = \sl_2$, the constructions in \cite{CD14} and in the present paper are similar but not quite identical. It turns out that in the specialization $t=-q^2$, the differences can be explained by two different cabling procedures.
Our construction for iterated torus knots involved the \emph{topological pairs} $(\rr,\aaa)$ and uses the standard topological cabling $K^\tp_{r,a}$ of a knot $K$  (we refer to $K^\tp_{r,a}$ as the \emph{topological cable} of $K$). However, the definition in \cite{CD14} uses the \emph{Newton pairs} $(\rr, \sss)$, which are related to the topological pairs via equation (\ref{eq_newton}).
In Section \ref{sec_cabling} we described a slightly non-standard procedure for 
constructing a cable $K^\alg_{r,s}$ of a framed knot $K$, which we call the 
\emph{algebraic cable} of $K$. Iterating these procedures (starting with the $0$-
framed unknot), we obtain two framed knots: $K^\tp_{\rr,\aaa}$ and $K^\alg_{\rr,
\sss}$. It turns out that these two knots are the same, up to an overall framing, and this implies their colored Jones polynomials are equal up to multiplication by a power of $q$. 

\begin{remark}\label{rmk_compareforallt}
At this point we will not describe a precise relationship between the constructions for arbitrary $t$. However, it seems likely that the monomial in $q$ in Definition \ref{def_iteratedcable} can be modified (to a rational function involving $t$ and $q$) in a way that would reproduce the polynomials defined in \cite{CD14}. In particular, the constants appearing in our construction are (powers of) eigenvalues of the Dehn twist of the solid torus, which has Chebyshev polynomials as eigenvectors. This Dehn twist acts on the torus and on the solid torus, and it has a $t$-deformation which is the subgroup of lower-triangular matrices in $\SL_2(\Z)$ acting on the DAHA and its polynomial representation. This operator on the polynomial representation is diagonalized by Macdonald polynomials, with eigenvalues depending on $q$ and $t$.
\end{remark}

We first recall their construction when it is specialized to $\g = \sl_2$ after  fixing some notation. Let $\rr = (r_1,\ldots,r_m)$ and $\sss = (s_1,\ldots,s_m)$ with $r_i, s_i \in \Z$ relatively prime be \emph{Newton pairs}, and let $\aaa = (a_1,\ldots,a_m)$ be determined by 
\begin{equation}\label{eq_newton}
 a_1 = s_1, \quad a_{i+1} = s_{i+1} + r_i r_{i+1} a_i
\end{equation}

\begin{remark}
 The terminology `Newton pairs' is used because they are the numbers produced by applying Newton's method to produce a multiplicative expansion of a Puiseux series describing an irreducible singularity of an algebraic planar curve. (See \cite[Appendix 1A]{EN85}.)
\end{remark}

\begin{definition}
Given $\rr,\sss$ we write $K^\alg(\rr,\sss)$ for the iterated algebraic cable of the $0$-framed unknot. 
\end{definition}

The algebraic cabling formula of Corollary \ref{corollary_algcablingformula} has a simple extension to iterated cables, where in the definition of $\Gamma_{r,s}^\alg$ in (\ref{def_gammaalg}) we identify $u=y$ as in Remark \ref{remark_ueqy}.
\begin{corollary}\label{corollary_algiteratedcablingformula}
Given sequences $\rr$, $\sss$ as above, let $\Gamma^\alg_{\rr,\sss} = \Gamma^\alg_{r_1,s_1} \circ \cdots \circ \Gamma^\alg_{r_m,s_m}: \C[y] \to \C[y]$, where $\Gamma^\alg_{r,s}$ is defined in equation (\ref{def_gammaalg}). Then we have the equality
\[
J_n(K^\alg(\rr,\sss)) = q^\bullet \langle \varnothing \cdot \Gamma^\alg_{\rr,\sss}(S_{n-1}(y)),\varnothing\rangle_{unknot}
\]
where the exponent in $q^\bullet$ depends on $\rr$, $\sss$, and $n$.
\end{corollary}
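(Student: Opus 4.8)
The plan is to mimic the proof of Corollary \ref{corollary_topcablingformula} (whose iterated version appears as Corollary \ref{corollary_topiteratedcablingformula}), replacing the topological cabling map $\Gamma^\tp_{r,s}$ by its algebraic analogue $\Gamma^\alg_{r,s}$ at each stage, and to keep careful track of the framing corrections that now accumulate. First I would set up, for each $k = 1,\dots,m$, neighborhoods $N_{k}$ of the intermediate cable $K^\alg(\rr_k,\sss_k)$ together with the tautological inclusions $N_k \hookrightarrow N_{T_k} \hookrightarrow N_{k-1}$ of Definition \ref{def_algcable}, with $N_0$ a neighborhood of the $0$-framed unknot. Composing these gives inclusions whose induced maps on skein modules compose to $\Gamma^\alg_{\rr,\sss} = \Gamma^\alg_{r_1,s_1}\circ\cdots\circ\Gamma^\alg_{r_m,s_m}$ after the identifications $u=y$ of Remark \ref{remark_ueqy}; this is the content of Corollary \ref{corollary_algiteratedcablingformula}'s right-hand side.

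Next I would chain the tautological inclusions into $S^3$. As in the proof of Corollary \ref{corollary_topcablingformula}, there is an inclusion $\iota_m\colon N_m \to S^3$ and an inclusion $\iota_0\colon N_0 \to S^3$ with $\iota_m = \iota_0\circ\Gamma^\alg_{\rr,\sss}$, so by functoriality of $K_q(-)$ and Theorem \ref{thm_coloredjonespolys},
\[
 J_n(K^\alg(\rr,\sss);q) = \langle S_{n-1}(u),\varnothing\rangle_{N_m} = \iota_0\bigl(\Gamma^\alg_{\rr,\sss}(S_{n-1}(u))\bigr) = \langle \varnothing\cdot \Gamma^\alg_{\rr,\sss}(S_{n-1}(y)),\varnothing\rangle_{unknot},
\]
\emph{up to} a power of $-q$. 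The reason a correction power of $q$ appears, rather than an exact equality as in the topological case, is that $K^\alg(\rr,\sss)$ is not $0$-framed: its framing is the one parallel to the last torus $T_m$, and at each cabling stage Lemma \ref{lemma_algskeincable} records that $\iota(S_{n-1}(u)) = \gamma_{r_k,s_k}(S_{n-1}(y))$ \emph{without} the twisting factor $(-q)^{r_k s_k(n^2-1)}$ that appeared in Lemma \ref{lemma_topskeincable}; instead, Corollary \ref{corollary_algcablingformula} tells us that evaluating $S_{n-1}(u)$ on the framed cable differs from evaluating on the $0$-framed version by a power of $-q$ whose exponent depends on $n$, $r_k$, $s_k$ and the framing of the previous stage. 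Since the Jones--Wenzl idempotent $S_{n-1}$ annihilates cups and caps, each full twist contributes a fixed power of $-q$ (as in \cite[Thm.\ 3]{MV94}), so iterating gives an overall factor $q^\bullet$ with $\bullet$ a function of $\rr,\sss,n$ only; this is exactly the statement of Corollary \ref{corollary_algiteratedcablingformula}. Finally I would remark that the framing of $K^\alg(\rr_{k-1},\sss_{k-1})$ that enters the exponent at stage $k$ is itself determined recursively by $\rr_{k-1},\sss_{k-1}$, which is why the total exponent depends only on $\rr$, $\sss$, $n$ and not on any auxiliary data.

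The main obstacle is purely bookkeeping: carrying the framing corrections coherently through the iteration, i.e.\ verifying that the composite of the maps $\Gamma^\alg_{r_k,s_k}$ — each of which is well-defined only after fixing the ``parallel to $T_k$'' framing on its source — really does compute $J_n$ of the genuinely iterated algebraic cable, and that the accumulated twist is a single well-defined power of $-q$. This reduces to an induction on $m$: the base case $m=1$ is Corollary \ref{corollary_algcablingformula}, and the inductive step uses that the framing of $K^\alg(\rr_{m-1},\sss_{m-1})$ (an explicit integer built from $\rr_{m-1},\sss_{m-1}$) is exactly the quantity appearing in the ``$\bullet$'' of the $m=1$ formula applied to the pair $(r_m,s_m)$, together with $\iota_m = \iota_{m-1}\circ\Gamma^\alg_{r_m,s_m}$. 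No genuinely new idea beyond those already used for the topological case is required; one only has to be careful that the exponents add rather than, say, interact multiplicatively, which follows because twisting and insertion of Jones--Wenzl idempotents commute in the appropriate sense.
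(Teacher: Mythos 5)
Your proposal is correct and follows essentially the same route as the paper, which proves this corollary by combining the proof of Corollary \ref{corollary_algcablingformula} with a simple induction on the number of cabling stages. Your additional bookkeeping — observing that the composite $\iota_0\circ\Gamma^\alg_{\rr,\sss}$ is induced by the single inclusion $N_m\hookrightarrow S^3$, so the entire framing correction is a single power of $-q$ determined recursively by $\rr$, $\sss$, and $n$ — is exactly the content the paper leaves implicit.
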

\begin{proof}
This follows from the proof of Corollary \ref{corollary_algcablingformula} combined with a simple induction.
\end{proof}

\begin{lemma}[{\cite[Proposition 1A.1]{EN85}}]\label{lemma_en}
 Let $a_i$ be as defined in (\ref{eq_newton}). Then the following iterated cables produce the same knot up to an overall framing twist:
 \[
  K^{\tp}(\rr,\aaa),\quad K^\alg(\rr,\sss)
 \]
\end{lemma}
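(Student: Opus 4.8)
The statement to prove is Lemma \ref{lemma_en}: the two iterated cables $K^{\tp}(\rr,\aaa)$ and $K^{\alg}(\rr,\sss)$ coincide as unframed knots (equivalently, agree up to an overall framing twist), where $\aaa$ is determined from $\sss$ by the Newton recursion \eqref{eq_newton}. This is attributed to Eisenbud--Neumann, so the "proof" here is really an explanation of how the bookkeeping works; let me sketch the plan.

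\textbf{Proof proposal.}

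The plan is to compare the two cabling procedures step by step and track the difference in framing conventions. The essential point is that both the topological cable $K^{\tp}_{r,a}$ and the algebraic cable $K^{\alg}_{r,s}$ of a knot $J$ are obtained by taking the $(r,\bullet)$-curve on the boundary torus $T=\partial N_J$; the only differences are (i) which longitude of $T$ is used as the reference framing when one writes "$(r,\bullet)$", and (ii) what framing the resulting cable is given. For the topological cable one uses the canonical $0$-framed longitude of $J$ (Lemma \ref{lemma_meridianlongitude}) as reference and gives the cable the $0$-framing; for the algebraic cable one uses the given framing of $J$ as reference and gives the cable the framing parallel to $T$. First I would make this precise for a single cabling step: if $J$ is a knot whose given framing differs from its $0$-framing by $w$ full twists (writhe $w$), then the $(r,s)$-curve measured against the given-framing longitude equals the $(r, s + rw)$-curve measured against the $0$-framed longitude, as \emph{unframed} knots in $S^3$. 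Hence, at the level of unframed knots, $K^{\alg}_{r,s}(J) = K^{\tp}_{r,\, s+rw}(J)$. This is exactly the kind of statement recorded in the sentence of \cite[pg.\ 323]{MM08} quoted in the proof of Lemma \ref{lemma_topskeincable}, and it is also \cite[Prop.\ 1A.1]{EN85}.

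Next I would iterate. Starting from the $0$-framed unknot, both constructions agree at the first step since $a_1 = s_1$ and the unknot has writhe $0$. Inductively, suppose $K^{\tp}(\rr_{k},\aaa_{k})$ and $K^{\alg}(\rr_k,\sss_k)$ are the same unframed knot but carry framings differing by some integer $w_k$ (number of full twists). Applying the single-step comparison above to the $(r_{k+1},\bullet)$-cabling of this knot: the algebraic cable uses the current framing as reference and the curve $(r_{k+1},s_{k+1})$, while the topological cable first re-zero-frames and uses $(r_{k+1}, a_{k+1})$. Matching these as unframed knots forces $a_{k+1} = s_{k+1} + r_{k+1} w_k$, and I would then check that the framing discrepancy $w_{k+1}$ of the new cables, together with this relation, reproduces the Newton recursion $a_{i+1} = s_{i+1} + r_i r_{i+1} a_i$ of \eqref{eq_newton}. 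Concretely, the framing induced on the $(r_{k+1},\bullet)$-curve by "parallel to $T$" differs from its $0$-framing in $S^3$ by an amount that is $r_{k+1}^2$ times the writhe of the companion plus the self-linking coming from the torus embedding; carrying out this linking-number computation (using that the relevant curve wraps $r_{k+1}$ times meridionally around the companion, which itself has writhe related to $a_k$) yields $w_{k+1}$ in terms of $a_k$, and substituting into $a_{k+1} = s_{k+1} + r_{k+1} w_k$ gives precisely $a_{k+1} = s_{k+1} + r_k r_{k+1} a_k$. This closes the induction and shows the two iterated constructions produce the same unframed knot.

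The main obstacle is purely bookkeeping: getting the linking-number / framing computation in the inductive step exactly right, including the precise way the "framing parallel to $T$" on the $(r,s)$-curve translates into a writhe in $S^3$ when the companion itself is framed nontrivially (this is where the product $r_i r_{i+1}$ in \eqref{eq_newton} comes from, as opposed to a single power of $r$). Rather than reproduce this calculation in detail, I would cite \cite[Proposition 1A.1]{EN85} for the identity of the underlying unframed knots and note that the "overall framing twist" ambiguity in the statement is exactly the accumulated discrepancy $\sum$ of the $w_k$'s; since colored Jones polynomials of framed knots change only by an overall power of $q$ under a framing twist (by the properties of the Jones--Wenzl idempotent used in Corollary \ref{corollary_topcablingformula}), this ambiguity is harmless for the applications in the Appendix.
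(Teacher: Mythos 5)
The paper gives no proof of this lemma beyond the citation to \cite[Prop.\ 1A.1]{EN85}, and your proposal ultimately rests on the same citation, so you are taking essentially the same route; your supplementary sketch of the bookkeeping (the single-step identity $K^{\alg}_{r,s}(J)=K^{\tp}_{r,\,s+rw}(J)$ for a companion of writhe $w$, the writhe recursion $w_{k+1}=r_{k+1}s_{k+1}+r_{k+1}^2w_k$, and hence $a_{k+1}=s_{k+1}+r_kr_{k+1}a_k$) is correct. One small caution: for that arithmetic to produce the factor $r_ir_{i+1}$ rather than $s_is_{i+1}$, the index $r$ must be the \emph{longitudinal} winding number of the cable about its companion (so ``wraps $r_{k+1}$ times meridionally'' should read ``longitudinally''), a convention the paper's own description of $(m,l)$-curves leaves slightly ambiguous.
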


We now recall the definition of the polynomials $JD_{\rr,\sss}(n; q,t)$ given in \cite[Thm. 2.1]{CD14} for $\g = \sl_2$. We will use the notation of Section \ref{sec_cheredniktorus}. Given $\rr,\sss$ as above, let $\gamma_i \in \SL_2(\Z)$ be such that
\begin{equation*}
 \gamma_i\left(\begin{array}{c}0 \\ 1\end{array}\right) = \left(\begin{array}{c}r_i\\s_i\end{array}\right)
\end{equation*}
Write $\Gamma_i^c: P\e_c \to P\e_c$ for the $\C$-linear map given by $\Gamma_i^c(f(Y+Y^{-1})) = 1\cdot \gamma_i(f(Y+Y^{-1}))$, and write $\Gamma^c_{\rr,\sss} = \Gamma^c_1\circ\cdots\circ \Gamma^c_m$.

\begin{definition}\label{def_cditerated}
Given $\rr,\sss$ as above, \cite{CD14} defines
\begin{equation}
JD_{n, \rr,\sss}(q,t) := \epsilon_c(\Gamma^c_{\rr,\sss}(p_{n-1}(Y+Y^{-1})))
\end{equation}
\end{definition}
\begin{remark}
This definition has been rescaled by a constant so that it will specialize to our convention for the Jones polynomials. We have also used the fact that the pairing $\langle-,-\rangle_c: P\e_c \otimes \e_c P \to \C$ is symmetric.
\end{remark}
\begin{theorem}
 We have the equality 
 \[ 
 JD_{n,\rr,\sss}(q,t=-q^2) = q^k J_n(K^\alg(\rr,\sss)) = q^j J_n(K^\tp(\rr,\aaa))
 \]
for some integers $j,k$ that depend on $\rr$, $\sss$, and $n$.
\end{theorem}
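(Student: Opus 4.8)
The plan is to rerun the argument that proves Theorem~\ref{thm_topiteratedcablespecialization}, but with the \emph{algebraic} cabling maps in place of the topological ones, and then to identify the resulting knot via the Eisenbud--Neumann lemma (Lemma~\ref{lemma_en}).

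First I would observe that, after the harmless identification ``$u=y$'' of Remark~\ref{remark_ueqy} between the variable of $P\e_c$ and $Y+Y^{-1}$, the operator $\Gamma^c_i$ of Definition~\ref{def_cditerated} is precisely the operator $\Gamma^\tp_{r_i,s_i;q,t}$ of Definition~\ref{def_iteratedcable} \emph{with the framing monomial $(-q)^{r_is_i(n^2-1)}$ deleted}: both are given by $f\mapsto 1\cdot\gamma_{r_i,s_i}(f)$, where $\gamma_{r_i,s_i}$ acts through the $\SL_2(\Z)$-action~(\ref{cherednikssl2zaction}) on the spherical DAHA. I would then invoke, essentially verbatim, the chain of identifications used in the proof of Theorem~\ref{thm_topiteratedcablespecialization} at $t=-q^2$: the $\SL_2(\Z)$-equivariant algebra isomorphisms $\SH_{q,-q^2}\cong\SH_{q,1}$ (Theorem~\ref{sphericalrelations} and Lemma~\ref{lemma_sl2actiononB}) and $\SH_{q,1}\cong K_q(T^2)$ (Theorem~\ref{fg00}), the isomorphism $P\e_c|_{t=-q^2}\cong K_q(N_{unknot})$ of right modules (Lemma~\ref{lemma_ppluspminusiso}), the specialization of the Macdonald polynomial $p_{n-1}(Y+Y^{-1})$ to the Chebyshev polynomial $S_{n-1}(y)$, and the identification of $\epsilon_c$ at $t=-q^2$ with the topological pairing $\langle-,\varnothing\rangle_{unknot}$ (Lemmas~\ref{lemma_teq1ispairing} and~\ref{lemma_compareevaluations}). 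Under all of these, $\Gamma^c_i$ becomes the skein-theoretic algebraic cabling map $\Gamma^\alg_{r_i,s_i}$ of Lemma~\ref{lemma_algskeincable}, since there the operation ``$1\cdot(-)$'' is exactly the module map $\mu\colon K_q(N_T)\to K_q(N_K)$, $a\mapsto\varnothing\cdot a$; and the composition conventions for $\Gamma^c_{\rr,\sss}$ and $\Gamma^\alg_{\rr,\sss}$ agree. Hence
\[
JD_{n,\rr,\sss}(q,t=-q^2)=\langle\varnothing\cdot\Gamma^\alg_{\rr,\sss}(S_{n-1}(y)),\varnothing\rangle_{unknot}.
\]

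Comparing this with the iterated algebraic cabling formula of Corollary~\ref{corollary_algiteratedcablingformula} gives $JD_{n,\rr,\sss}(q,t=-q^2)=q^{k}J_n(K^\alg(\rr,\sss))$ for an integer $k$ depending only on $\rr,\sss,n$ (namely minus the exponent $q^\bullet$ of that corollary, which records the nonzero framings produced by the algebraic cabling, together with the normalizing constant of Definition~\ref{def_cditerated}). For the last equality I would invoke Lemma~\ref{lemma_en}: the knots $K^\alg(\rr,\sss)$ and $K^\tp(\rr,\aaa)$ coincide up to an overall framing twist, where $\aaa$ is obtained from $\sss$ by the Newton recursion~(\ref{eq_newton}). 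Since the colored Jones polynomial is an invariant of the underlying (unframed) knot, $J_n(K^\alg(\rr,\sss))$ and $J_n(K^\tp(\rr,\aaa))$ differ at most by the framing-twist unit, which is a power of $-q$ (the twist coefficient appearing in Lemma~\ref{lemma_topskeincable} and Corollary~\ref{corollary_algcablingformula}); absorbing it into the monomial yields $q^{k}J_n(K^\alg(\rr,\sss))=q^{j}J_n(K^\tp(\rr,\aaa))$ for an integer $j$ depending only on $\rr,\sss,n$, completing the proof.

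The step I expect to be the main obstacle is the first one: checking that $\Gamma^c_i$ \emph{as actually defined in}~\cite{CD14} really equals $f\mapsto 1\cdot\gamma_{r_i,s_i}(f)$ in our normalization. This requires reconciling the conventions of~\cite{CD14} (Newton pairs, their choices of $q^{1/2}$ and $t^{1/2}$, their normalizations of the polynomial representation and of the evaluation pairing) with ours, and verifying that no extra $q$- or $t$-dependent factor creeps in at any step of the composition $\Gamma^c_1\circ\cdots\circ\Gamma^c_m$. Once this dictionary is pinned down the remainder is routine bookkeeping; in particular the theorem asserts only the \emph{existence} of the integers $j,k$, so they need not be computed, and the sign ambiguities in the twist coefficients (which depend on the parity of $n$, cf.\ the remark at the end of Section~\ref{sec_cabling}) are harmless.
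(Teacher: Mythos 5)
Your proposal is correct and follows essentially the same route as the paper: the first equality is obtained by rerunning the identifications from the proof of Theorem \ref{thm_topiteratedcablespecialization} at $t=-q^2$ to show that $JD_{n,\rr,\sss}$ reproduces the algebraic cabling formula of Corollary \ref{corollary_algiteratedcablingformula}, and the second equality comes from Lemma \ref{lemma_en} together with the fact that changing the framing of a knot decorated with a Jones--Wenzl idempotent only multiplies its $S^3$-evaluation by a power of $-q$. The paper's proof is just a terser version of exactly this argument, and your closing caveats (convention-matching with \cite{CD14}, sign of $-q$ versus $q$) are handled at the same level of care as in the paper itself.
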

\begin{proof}
 We first remark that if we place a Jones-Wenzl idempotent on two different framings of a knot $K$ and then evaluate in $S^3$, the results will differ by a power of $-q$. (This is explained in the proof of Corollary \ref{corollary_topcablingformula}.) This combined with Lemma \ref{lemma_en} shows the second equality. The arguments of Theorem \ref{thm_topiteratedcablespecialization} show that when $t=-q^2$, the formula defining $JD(\rr,\sss;q,t=-q^2)$ reproduces the \emph{algebraic} cabling formula for iterated torus knots given in Corollary \ref{corollary_algiteratedcablingformula}, which proves the first equality.
\end{proof}

\bibliography{link_bibtex_july17_toppaper_2014}{}

\providecommand{\bysame}{\leavevmode\hbox to3em{\hrulefill}\thinspace}
\providecommand{\MR}{\relax\ifhmode\unskip\space\fi MR }
\providecommand{\MRhref}[2]{%
  \href{http://www.ams.org/mathscinet-getitem?mr=#1}{#2}
}
\providecommand{\href}[2]{#2}
\begin{thebibliography}{BHMV95}

\bibitem[Ago]{35687}
Ian Agol, \emph{Complete knot invariant?}, MathOverflow,
  URL:http://mathoverflow.net/q/35687 (version: 2010-08-16).

\bibitem[AI83]{AI83}
R.~Askey and Mourad E.~H. Ismail, \emph{A generalization of ultraspherical
  polynomials}, Studies in pure mathematics, Birkh\"auser, Basel, 1983,
  pp.~55--78. \MR{820210 (87a:33015)}

\bibitem[AS11]{AS11}
M.~{Aganagic} and S.~{Shakirov}, \emph{{Knot Homology from Refined Chern-Simons
  Theory}}, ArXiv 1105.5117 (2011).

\bibitem[AS12]{AS12}
Mina Aganagic and Shamil Shakirov, \emph{Refined {C}hern-{S}imons theory and
  knot homology}, String-{M}ath 2011, Proc. Sympos. Pure Math., vol.~85, Amer.
  Math. Soc., Providence, RI, 2012, pp.~3--31. \MR{2985324}

\bibitem[AS15]{AS15}
\bysame, \emph{Knot homology and refined {C}hern-{S}imons index}, Comm. Math.
  Phys. \textbf{333} (2015), no.~1, 187--228. \MR{3294947}

\bibitem[BHMV95]{BHMV95}
C.~Blanchet, N.~Habegger, G.~Masbaum, and P.~Vogel, \emph{Topological quantum
  field theories derived from the {K}auffman bracket}, Topology \textbf{34}
  (1995), no.~4, 883--927. \MR{1362791}

\bibitem[BP00]{BP00}
Doug Bullock and J{\'o}zef~H. Przytycki, \emph{Multiplicative structure of
  {K}auffman bracket skein module quantizations}, Proc. Amer. Math. Soc.
  \textbf{128} (2000), no.~3, 923--931. \MR{1625701 (2000e:57007)}

\bibitem[BS14]{BS14}
Y.~{Berest} and P.~{Samuelson}, \emph{{Double affine Hecke algebras and
  generalized Jones polynomials}}, ArXiv 1402.6032 (2014).

\bibitem[BZ03]{BZ03}
Gerhard Burde and Heiner Zieschang, \emph{Knots}, second ed., de Gruyter
  Studies in Mathematics, vol.~5, Walter de Gruyter \& Co., Berlin, 2003.
  \MR{1959408 (2003m:57005)}

\bibitem[CD14]{CD14}
I.~{Cherednik} and I.~{Danilenko}, \emph{{DAHA and iterated torus knots}},
  arXiv 1408.4348 (2014).

\bibitem[Che95]{Che95}
Ivan Cherednik, \emph{Double affine {H}ecke algebras and {M}acdonald's
  conjectures}, Ann. of Math. (2) \textbf{141} (1995), no.~1, 191--216.
  \MR{1314036 (96m:33010)}

\bibitem[Che05]{Che05}
\bysame, \emph{Double affine {H}ecke algebras}, London Mathematical Society
  Lecture Note Series, vol. 319, Cambridge University Press, Cambridge, 2005.
  \MR{2133033 (2007e:32012)}

\bibitem[{Che}11]{Che11}
I.~{Cherednik}, \emph{Jones polynomials of torus knots via daha}, ArXiv
  1111.6195 (2011).

\bibitem[Che13]{Che13}
Ivan Cherednik, \emph{Jones polynomials of torus knots via {DAHA}}, Int. Math.
  Res. Not. IMRN (2013), no.~23, 5366--5425. \MR{3142259}

\bibitem[CKM14]{CKM14}
Sabin Cautis, Joel Kamnitzer, and Scott Morrison, \emph{Webs and quantum skew
  {H}owe duality}, Math. Ann. \textbf{360} (2014), no.~1-2, 351--390.
  \MR{3263166}

\bibitem[EGL13]{EGL13}
P.~{Etingof}, E.~{Gorsky}, and I.~{Losev}, \emph{{Representations of Rational
  Cherednik algebras with minimal support and torus knots}}, arXiv 1304.3412
  (2013).

\bibitem[EN85]{EN85}
David Eisenbud and Walter Neumann, \emph{Three-dimensional link theory and
  invariants of plane curve singularities}, Annals of Mathematics Studies, vol.
  110, Princeton University Press, Princeton, NJ, 1985. \MR{817982 (87g:57007)}

\bibitem[FG00]{FG00}
Charles Frohman and R{\u{a}}zvan Gelca, \emph{Skein modules and the
  noncommutative torus}, Trans. Amer. Math. Soc. \textbf{352} (2000), no.~10,
  4877--4888. \MR{MR1675190 (2001b:57014)}

\bibitem[GN15]{GN15}
Eugene Gorsky and Andrei Negut, \emph{Refined knot invariants and {H}ilbert
  schemes}, J. Math. Pures Appl. (9) \textbf{104} (2015), no.~3, 403--435.
  \MR{3383172}

\bibitem[GORS14]{GORS14}
Eugene Gorsky, Alexei Oblomkov, Jacob Rasmussen, and Vivek Shende, \emph{Torus
  knots and the rational {DAHA}}, Duke Math. J. \textbf{163} (2014), no.~14,
  2709--2794. \MR{3273582}

\bibitem[KM91]{KM91}
Robion Kirby and Paul Melvin, \emph{The {$3$}-manifold invariants of {W}itten
  and {R}eshetikhin-{T}uraev for {${\rm sl}(2,{\bf C})$}}, Invent. Math.
  \textbf{105} (1991), no.~3, 473--545. \MR{1117149 (92e:57011)}

\bibitem[Kup96]{Kup96}
Greg Kuperberg, \emph{Spiders for rank {$2$} {L}ie algebras}, Comm. Math. Phys.
  \textbf{180} (1996), no.~1, 109--151. \MR{1403861 (97f:17005)}

\bibitem[MM08]{MM08}
H.~R. Morton and P.~M.~G. Manch{\'o}n, \emph{Geometrical relations and
  plethysms in the {H}omfly skein of the annulus}, J. Lond. Math. Soc. (2)
  \textbf{78} (2008), no.~2, 305--328. \MR{2439627 (2009h:57021)}

\bibitem[Mor95]{Mor95}
H.~R. Morton, \emph{The coloured {J}ones function and {A}lexander polynomial
  for torus knots}, Math. Proc. Cambridge Philos. Soc. \textbf{117} (1995),
  no.~1, 129--135. \MR{1297899 (95h:57008)}

\bibitem[MS14]{MS14}
H.~{Morton} and P.~{Samuelson}, \emph{{The HOMFLYPT skein algebra of the torus
  and the elliptic Hall algebra}}, arXiv 1410.0859 (2014).

\bibitem[MS17]{MS17}
Hugh Morton and Peter Samuelson, \emph{The {HOMFLYPT} skein algebra of the
  torus and the elliptic {H}all algebra}, Duke Math. J. \textbf{166} (2017),
  no.~5, 801--854. \MR{3626565}

\bibitem[MV94]{MV94}
G.~Masbaum and P.~Vogel, \emph{{$3$}-valent graphs and the {K}auffman bracket},
  Pacific J. Math. \textbf{164} (1994), no.~2, 361--381. \MR{1272656
  (95e:57003)}

\bibitem[Prz91]{Prz91}
J{\'o}zef~H. Przytycki, \emph{Skein modules of {$3$}-manifolds}, Bull. Polish
  Acad. Sci. Math. \textbf{39} (1991), no.~1-2, 91--100. \MR{1194712
  (94g:57011)}

\bibitem[RT90]{RT90}
N.~Yu. Reshetikhin and V.~G. Turaev, \emph{Ribbon graphs and their invariants
  derived from quantum groups}, Comm. Math. Phys. \textbf{127} (1990), no.~1,
  1--26. \MR{1036112 (91c:57016)}

\bibitem[Sam12]{Sam12}
Peter Samuelson, \emph{Kauffman bracket skein modules and the quantum torus},
  Ph.D. thesis, Cornell University, 2012.

\bibitem[Sik05]{Sik05}
Adam~S. Sikora, \emph{Skein theory for {${\rm SU}(n)$}-quantum invariants},
  Algebr. Geom. Topol. \textbf{5} (2005), 865--897 (electronic). \MR{2171796
  (2006j:57033)}

\bibitem[SW07]{SW07}
Adam~S. Sikora and Bruce~W. Westbury, \emph{Confluence theory for graphs},
  Algebr. Geom. Topol. \textbf{7} (2007), 439--478. \MR{2308953 (2008f:57004)}

\bibitem[Ter13]{Ter13}
Paul Terwilliger, \emph{The universal {A}skey-{W}ilson algebra and {DAHA} of
  type {$(C\sp \vee\sb 1,C\sb 1)$}}, SIGMA Symmetry Integrability Geom. Methods
  Appl. \textbf{9} (2013), Paper 047, 40. \MR{3116183}

\bibitem[{Tra}14]{Tra14}
A.~T. {Tran}, \emph{{The strong {AJ} conjecture for cables of torus knots}},
  arXiv:1404.0331 (2014).

\bibitem[{van}08]{vdV08}
R.~{van der Veen}, \emph{{A cabling formula for the colored Jones polynomial}},
  ArXiv 0807.2679 (2008).

\end{thebibliography}
\bibliographystyle{amsalpha}

\end{document}